\documentclass[dvipsnames,onefignum,onetabnum]{siamart171218}
\usepackage[top=1.4in,bottom=1.275in,left=1.4in,right=1.4in]{geometry}

\usepackage[T1]{fontenc}
\usepackage[english]{babel}\usepackage{csquotes}\usepackage[activate={true,nocompatibility},tracking=true]{microtype}\usepackage{hyphenat}\usepackage{siunitx}
\sisetup{exponent-product=\cdot,binary-units=true}
\usepackage{placeins}
\usepackage{cite}\usepackage[begintext=``, endtext='']{quoting}
\SetBlockEnvironment{quoting}
\SetBlockThreshold{1}

\usepackage{amsfonts}
\usepackage{amssymb}
\usepackage{amscd}
\usepackage{mathtools}
\usepackage[mathscr]{eucal}
\usepackage{calligra}
\DeclareMathAlphabet{\mathcalligra}{T1}{calligra}{m}{n}
\DeclareFontShape{T1}{calligra}{m}{n}{<->s*[1.1]callig15}{}
\usepackage{bm}
\usepackage{autonum}\usepackage[nice]{nicefrac}

\usepackage{tikz}
\usepackage{pgf}
\usepackage{pgfplots}
\usepackage{pgfplotstable}
\usepackage{pgf-pie}
\usepackage{shellesc}
\pgfplotsset{compat=newest}
\usetikzlibrary{spy}
\usetikzlibrary{shapes}
\usetikzlibrary{backgrounds}
\usetikzlibrary{shadows}
\usetikzlibrary{matrix}
\usetikzlibrary{external}
\usepgfplotslibrary{fillbetween}
\tikzexternalize[prefix=tikz/]

\usepackage{comment}
\usepackage{graphicx}
\usepackage[colorinlistoftodos,prependcaption,textsize=small]{todonotes}
\usepackage{cleveref}

\crefname{equation}{}{}
\usepackage{csvsimple}
\usepackage{booktabs}
\usepackage{environ}
\usepackage{adjustbox}
\usepackage{relsize}
\usepackage[font=small,figurewithin=section]{caption}\usepackage[subrefformat=parens,labelformat=simple]{subcaption}
\usepackage{afterpage}

\newtheorem{remark}{Remark}[section]

\newcommand{\changed}[1]{#1}

\let\inf\relax \DeclareMathOperator*\inf{\vphantom{p}inf}
\let\min\relax \DeclareMathOperator*\min{\vphantom{p}min}
\let\max\relax \DeclareMathOperator*\max{\vphantom{p}max}
\let\subset\relax \DeclareMathOperator{\subset}{\subseteq}
 
\let\tilde\widetilde
\let\hat\widehat

\DeclarePairedDelimiter\floor{\lfloor}{\rfloor}

\DeclareMathOperator*{\supp}{supp}

\newcommand{\integral}[4]{\int_{#1}^{#2} #3 \,\mathrm{d}#4}

\newcommand{\R}{\mathbb{R}} \newcommand{\N}{\mathbb{N}}  \ifdef{\C}{\renewcommand{\C}{\mathbb{C}}}{\newcommand{\C}{\mathbb{C}}}  \newcommand{\zI}{\mathrm{i}}     \newcommand{\tr}{\mathrm{tr}} \newcommand{\spann}{\mathrm{span}}        \newcommand{\conj}[1]{\overline{#1}}
\newcommand{\T}{{\raisebox{.1ex}[0ex][0ex]{$\scriptscriptstyle\mathsf{T}$}}}

\newcommand{\bdelta}{{\bm{\delta}}}

\newcommand{\bveps}{{\bm{\varepsilon}}}

\newcommand{\bxi}{\bm{\xi}}

\newcommand{\bsigma}{\bm{\sigma}}

\newcommand{\bvarphi}{\bm{\varphi}}

\newcommand{\iu}{\mathrm{i}}

\newcommand{\bmf}{\bm{f}}
\newcommand{\bmg}{\bm{g}}

\newcommand{\bmi}{\bm{i}}
\newcommand{\bmj}{\bm{j}}

\newcommand{\bmn}{\bm{n}}

\newcommand{\bmt}{\bm{t}}
\newcommand{\bmu}{\bm{u}}
\newcommand{\bmv}{\bm{v}}

\newcommand{\bmx}{\bm{x}}
\newcommand{\bmy}{\bm{y}}

\newcommand{\bmJ}{\bm{J}}

\newcommand{\sff}{\mathsf{f}}

\newcommand{\sfu}{\mathsf{u}}
\newcommand{\sfv}{\mathsf{v}}

\newcommand{\sfB}{\mathsf{B}}

\newcommand{\sfD}{\mathsf{D}}

\newcommand{\sfK}{\mathsf{K}}

\newcommand{\sfM}{\mathsf{M}}

\newcommand{\mcA}{\mathcal{A}}

\newcommand{\mcH}{\mathcal{H}}
\newcommand{\mcI}{\mathcal{I}}

\newcommand{\mcK}{\mathcal{K}}

\newcommand{\mcM}{\mathcal{M}}

\newcommand{\mcO}{\mathcal{O}}

\newcommand{\scD}{\mathscr{D}}

\newcommand{\bbX}{\mathbb{X}}

\newcommand{\bfc}{\mathbf{c}}

\newcommand{\bff}{\mathbf{f}}

\newcommand{\bfu}{\mathbf{u}}
\newcommand{\bfv}{\mathbf{v}}

\newcommand{\bfx}{\mathbf{x}}

\newcommand{\bfF}{\mathbf{F}}

\newcommand{\bfI}{\mathbf{I}}

\newcolumntype{?}{!{\vrule width 1.2pt}}

\makeatletter
\newsavebox{\measure@tikzpicture}
\NewEnviron{scaletikzpicturetowidth}[1]{	\tikzifexternalizingnext{		\def\tikz@width{#1}		\def\tikzscale{1}\begin{lrbox}{\measure@tikzpicture}			\tikzset{external/export next=false,external/optimize=false}			\BODY
		\end{lrbox}		\pgfmathparse{#1/\wd\measure@tikzpicture}		\edef\tikzscale{\pgfmathresult}		\BODY
	}{		\BODY
	}}
\makeatother

\definecolor{color1}{rgb}{0, 0.4470, 0.7410}
\definecolor{color2}{rgb}{0.8500, 0.3250, 0.0980}
\definecolor{color3}{rgb}{0.9290, 0.6940, 0.1250}
\definecolor{color4}{rgb}{0.7060, 0.3840, 0.7650}
\definecolor{color5}{rgb}{0.4660, 0.6740, 0.1880}
\definecolor{color6}{rgb}{0.3010, 0.7450, 0.9330}
\definecolor{color7}{rgb}{0.6350, 0.0780, 0.1840}
\definecolor{color8}{rgb}{0.7410, 0.3800, 0.1840}

\newcommand{\logLogSlopeTriangle}[6]
{
							
	\pgfplotsextra
	{
		\pgfkeysgetvalue{/pgfplots/xmin}{\xmin}
		\pgfkeysgetvalue{/pgfplots/xmax}{\xmax}
		\pgfkeysgetvalue{/pgfplots/ymin}{\ymin}
		\pgfkeysgetvalue{/pgfplots/ymax}{\ymax}
		
				\pgfmathsetmacro{\xArel}{#1-#2}
		\pgfmathsetmacro{\yArel}{#3}
		\pgfmathsetmacro{\xBrel}{#1}
		\pgfmathsetmacro{\yBrel}{\yArel}
		\pgfmathsetmacro{\xCrel}{\xArel}
				
		\pgfmathsetmacro{\lnxB}{\xmin*(1-(#1-#2))+\xmax*(#1-#2)} 		\pgfmathsetmacro{\lnxA}{\xmin*(1-#1)+\xmax*#1} 		\pgfmathsetmacro{\lnyA}{\ymin*(1-#3)+\ymax*#3} 		\pgfmathsetmacro{\lnyC}{\lnyA+#5/#4*(\lnxA-\lnxB)}
		\pgfmathsetmacro{\yCrel}{\lnyC-\ymin)/(\ymax-\ymin)} 		
				\coordinate (A) at (rel axis cs:\xArel,\yArel);
		\coordinate (B) at (rel axis cs:\xBrel,\yBrel);
		\coordinate (C) at (rel axis cs:\xCrel,\yCrel);
		
				\draw[gray!75!black, dashed, line width=1.0pt, #6]   (A)-- node[pos=0.5,anchor=north] {#4}
		(B)-- 
		(C)-- node[pos=0.5,anchor=east] {#5}
		cycle;
	}
}

\newcommand{\UpwardLogLogSlopeTriangle}[6]
{
							
	\pgfplotsextra
	{
		\pgfkeysgetvalue{/pgfplots/xmin}{\xmin}
		\pgfkeysgetvalue{/pgfplots/xmax}{\xmax}
		\pgfkeysgetvalue{/pgfplots/ymin}{\ymin}
		\pgfkeysgetvalue{/pgfplots/ymax}{\ymax}
		
				\pgfmathsetmacro{\xArel}{#1}
		\pgfmathsetmacro{\yArel}{#3}
		\pgfmathsetmacro{\xBrel}{#1-#2}
		\pgfmathsetmacro{\yBrel}{\yArel}
		\pgfmathsetmacro{\xCrel}{\xArel}
				
		\pgfmathsetmacro{\lnxB}{\xmin*(1-(#1-#2))+\xmax*(#1-#2)} 		\pgfmathsetmacro{\lnxA}{\xmin*(1-#1)+\xmax*#1} 		\pgfmathsetmacro{\lnyA}{\ymin*(1-#3)+\ymax*#3} 		\pgfmathsetmacro{\lnyC}{\lnyA+#5/#4*(\lnxA-\lnxB)}
		\pgfmathsetmacro{\yCrel}{\lnyC-\ymin)/(\ymax-\ymin)} 		
				\coordinate (A) at (rel axis cs:\xArel,\yArel);
		\coordinate (B) at (rel axis cs:\xBrel,\yBrel);
		\coordinate (C) at (rel axis cs:\xCrel,\yCrel);
		
				\draw[gray!75!black, dashed, line width=1.0pt, #6]   (A)-- node[pos=0.5,anchor=north] {#4}
		(B)-- 
		(C)-- node[pos=0.5,anchor=west] {#5}
		cycle;
	}
}

\pgfplotsset{
  log x ticks with fixed point/.style={
      xticklabel={
        \pgfkeys{/pgf/fpu=true}
        \pgfmathparse{exp(\tick)}        \pgfmathprintnumber[fixed relative, precision=3]{\pgfmathresult}
        \pgfkeys{/pgf/fpu=false}
      }
  },
  log y ticks with fixed point/.style={
      yticklabel={
        \pgfkeys{/pgf/fpu=true}
        \pgfmathparse{exp(\tick)}        \pgfmathprintnumber[fixed relative, precision=3]{\pgfmathresult}
        \pgfkeys{/pgf/fpu=false}
      }
  }
}

\tikzset{
  ashadow/.style={opacity=.25, shadow xshift=0.07, shadow yshift=-0.07},
}

\definecolor{CustomGreen}{RGB}{65,169,50}

\graphicspath{{./Figures/}}

\allowdisplaybreaks

\title{The surrogate matrix methodology: Accelerating isogeometric analysis of waves}

\author{Daniel~Drzisga\thanks{Lehrstuhl f\"ur Numerische Mathematik, Fakult\"at f\"ur Mathematik (M2), Technische Universit\"at M\"unchen, Garching bei M\"unchen (\email{drzisga@ma.tum.de}, \email{keith@ma.tum.de}, \email{wohlmuth@ma.tum.de})}
\and Brendan~Keith\footnotemark[1]
\and Barbara~Wohlmuth\footnotemark[1]}

\begin{document}

\maketitle

\begin{abstract}
The surrogate matrix methodology delivers low-cost approximations of matrices (i.e., surrogate matrices) which are normally computed in Galerkin methods via element-scale quadrature formulas.
In this paper, the methodology is applied to a number of model problems in wave mechanics treated in the Galerkin isogeometic setting.
Herein, the resulting surrogate methods are shown to significantly reduce the assembly time in high frequency wave propagation problems.
In particular, the assembly time is reduced with negligible loss in solution accuracy.
This paper also extends the scope of previous articles in its series by considering multi-patch discretizations of time-harmonic, transient, and nonlinear PDEs as particular use cases of the methodology.
Our a priori error analysis for the Helmholtz equation demonstrates that the additional consistency error introduced by the presence of surrogate matrices is \emph{independent of the wave number}.
In addition, our floating point analysis establishes that the computational complexity of the methodology compares favorably to other contemporary fast assembly techniques for isogeometric methods.
Our numerical experiments demonstrate clear performance gains for time-harmonic problems, both with and without the presence of perfectly matched layers.
Notable speed-ups are also presented for a transient problem with a compressible neo-Hookean material.
\end{abstract}

\begin{keywords}
   Matrix assembly, Helmholtz equation, linear elasticity, hyperelasticity, surrogate numerical methods, isogeometric analysis.
\end{keywords}

\section{Introduction} \label{sec:introduction}
Many techniques to accelerate the formation and assembly of coefficient matrices in Galerkin isogeometric analysis (Galerkin IGA) display their power only as the approximation order $p$ grows.
For example, in $n$-space dimensions, sum factorization reduces the computational complexity of element-wise matrix formation from $\mcO(p^{3n})$, realized with  standard nested quadrature loops, to $\mcO(p^{2n+1})$ \cite{bressan2018sum,ANTOLIN2015817}.
Alternatively, a weighted quadrature rule \cite{Calabro2017,sangalli2018matrix}, which specifies a different quadrature rule for a each individual test function, can reduce the number of quadrature points per element from $\mcO(p^n)$ to simply $\mcO(1)$.
In turn, using such a rule reduces the cost of matrix formation in nested quadrature loops from $\mcO(p^{3n})$ to $\mcO(p^{2n})$.
Combining both acceleration techniques can provide an even greater improvement to performance if element-wise assembly is superseded by a row/column loop.
Indeed, sum factorization and weighted quadrature, when used together with a row/column loop, has a floating point computational complexity of only $\mcO(p^{n+1})$ \cite{hiemstra2019fast}.

The surrogate matrix methodology is another way to reduce the assembly time in Galerkin methods.
However, unlike the strategies mentioned above, its power comes in the small mesh size limit $h\to0$.
In fact, the methodology was first born out of applications in the classical lowest-order ($p=1$) finite element setting \cite{bauer2017two,bauer2018new,bauer2018large,drzisga2018surrogate}; that is, where each of the preceding approaches mentioned above have roughly the same cost.
The surrogate matrix methodology is compatible with row/column loop assembly.
It can also be combined with sum factorization and weighted quadrature, however, that is not a focus of this work.

The fundamental observation behind the surrogate matrix methodology is that if the basis functions used in the trial and test spaces have a specific translational symmetry, then a functional relationship can be drawn between non-zero coefficients in the matrix and points in the reference domain.
This relationship is explicitly established via a finite number (specifically, $\mcO(p^{n})$) of so-called \emph{stencil functions}.
If these stencil functions are smooth, they need only to be sampled at a sparse collection of points (dependent upon $h$) in the reference domain in order to be accurately approximated.
In order to collect these sample values and, thus, define each approximate (i.e., \emph{surrogate}) stencil function, only specific rows/columns in the final matrix need to be computed via quadrature.
Thereafter, once enough samples have been collected, the remaining entries can be filled in by simply evaluating the surrogate stencil functions; an operation with a cost of $\mcO(p^{n}q)$, where $q$ is the (B-spline) degree of the surrogate stencil functions.

Even though $q$ is typically chosen larger than $p$, the floating point complexity remains comparable to that of other fast assembly strategies for Galerkin isogeometric methods\cite{hiemstra2017optimal,bressan2018sum,sangalli2018matrix,mantzaflaris2017low,hofreither2018black,ANTOLIN2015817,mantzaflaris2015integration,Mantzaflaris2014,Calabro2017,Fahrendorf2018,hughes2010efficient,auricchio2012simple,hiemstra2019fast,pan2019fast}.
More importantly, stencil functions provide a flexible platform for efficient processor-memory access which can be used to avoid cache thrashing and significantly reduce the time-to-solution in large scale, matrix-free, massively parallel computations.
This has been carefully demonstrated in previous work \cite{bauer2017two,bauer2018new,bauer2018large,drzisga2018surrogate} and is also not a focus of the present contribution.

Many mathematical aspects of the surrogate matrix methodology were worked out in the isogeometric setting in \cite{drzisga2019igasurrogate}.
In that paper, we showed that the use of surrogate matrices introduces an additional consistency error in the discrete solution which must be controlled by the discretization error of the original method.
The a priori error analysis, based on variational crimes \cite{brenner2007mathematical}, is not much different than that of reduced quadrature rules \cite{hughes2010efficient,SCHILLINGER2014} or of the integration by interpolation and look-up strategy investigated in \cite{Mantzaflaris2014,mantzaflaris2015integration,pan2019fast}.

This paper is part of a series which can be read in any order \cite{drzisga2018surrogate,drzisga2019igasurrogate,drzisga2019igasurrogateimpl}.
In this contribution, we advance the mathematical development of the methodology and focus on a representative set of time-harmonic, transient, and nonlinear wave propagation problems.
In particular, we present an a priori error analysis for the Helmholtz equation which shows that the additional consistency error introduced by the surrogate methodology is \emph{independent of the wave number}.
Although, we focus only on acoustic and hyperelastic waves, we expect that our conclusions will carry over to other material models as well as to other fields of application such as electro- and magnetodynamics and multi-physics wave propagation.
Complementary studies with vibration and plate bending are documented in \cite{drzisga2019igasurrogate}.

As we did in \cite{drzisga2019igasurrogate}, we present evidence of improved performance based only on small-scale feasibility studies with the MATLAB software library GeoPDEs \cite{de2011geopdes,vazquez2016new}.
In particular, we do not consider a parallel implementation or row/column loop assembly.
Although our floating point complexity analysis holds even without row/column loop assembly, both of these aspects are expected to only deliver added benefits to isogeometric surrogate matrix methods.

This paper deals with the Helmholtz equation, linearized elastic waves, and hyperelastic waves modeled with neo-Hookean materials.
In \Cref{sec:preliminaries}, we set the stage by introducing the various equations of interest and mention certain properties of their discretization which are required for the sections which follow.
In \Cref{sec:exploiting_basis_structure}, we describe the essential features of the surrogate matrix methodology in the context of scalar solution variables and its extension to vector-valued solution variables.
In \Cref{sec:contributions}, we present an a priori error analysis for the Helmholtz equation.
In addition, we specify certain aspects of surrogate methods in the presence of perfectly matched layers (PMLs) and in transient and nonlinear problems.
In \Cref{sec:computational_complexity}, we briefly remark on our implementation and establish its (floating point operation) computational complexity.
In \Cref{sec:helmholtz_results}, we provide computational evidence for the performance benefits of the methodology.
Finally, in \Cref{sec:conclusion}, we give some concluding remarks.
\section{Preliminaries} \label{sec:preliminaries}
In this section, we introduce the equations of interest and put forward the main notation of the paper.

\subsection{General equations} \label{sub:general_equations}

Let $\Omega$ be a fixed Lipschitz domain in $\R^n$, where $n=2,3$.
In addition, assume that the boundary of $\Omega$ is partitioned into two relatively open sets $\overline{\Gamma_\mathrm{D} \cup \Gamma_\mathrm{N}} = \partial\Omega$, $\Gamma_\mathrm{D} \cap \Gamma_\mathrm{N} = \emptyset$, and denote its outward unit normal by $\bmn$.
Let $W$ be a differentiable energy density functional, $\rho_0\colon \Omega \to \R_{>0}$ be a mass density function, and $\alpha,\beta \in \C$, $\alpha \neq 0$, two constants.
Consider the following abstract wave propagation problem on $\Omega$, taken over the time interval~$t \in [0,T]$:
\begin{equation}
\label{eq:GeneralSystem}
\begin{alignedat}{3}
\bmu &= \bmu_0
\qquad
&&\text{at } t=0
,
\\
\dot{\bmu} &= \bmv_0
\qquad
&&\text{at } t=0
,
\\
\mathrm{Div}\,\partial_{\bmu} W(\bmu) + \bmf
&= \rho_0\ddot{\bmu}
\qquad
&&\text{in } \Omega\times (0,T]
,
\\
\alpha \bmu + \beta \frac{\partial \bmu}{\partial \bmn} &= \bmg
\qquad
&&\text{on } \Gamma_\mathrm{D}\times (0,T]
,
\\
\partial_{\bmu} W(\bmu) \bmn
&= \bmt
\qquad
&&\text{on } \Gamma_\mathrm{N}\times (0,T]
.
\end{alignedat}
\end{equation}
As usual, the partial derivative in time $t$ is denoted by $\dot{}$ and $\mathrm{Div}$ denotes the (row-wise) divergence operator.

Note that when $W$ is quadratic in $\bmu$, we may also define the time harmonic form of~\cref{eq:GeneralSystem} as follows:
\begin{equation}
\label{eq:TimeHarmonicSystem}
\begin{alignedat}{3}
-\mathrm{Div}\,\partial_{\bmu} W(\bmu) - k^2 \bmu &= \bmf
\qquad
&&\text{in } \Omega
,
\\
\alpha \bmu + \beta \frac{\partial \bmu}{\partial \bmn} &= \bmg
\qquad
&&\text{on } \Gamma_\mathrm{D}
,
\\
\partial_{\bmu} W(\bmu) \bmn
&= \bmt
\qquad
&&\text{on } \Gamma_\mathrm{N}
.
\end{alignedat}
\end{equation}
Here, $k \in \R_{\geq0}$ is the wave number.

\subsection{Examples} \label{sub:examples}

Our focus lies on a number of equations that can be cast in this abstract form of~\cref{eq:GeneralSystem,eq:TimeHarmonicSystem}.
In the case of scalar-valued solution variables, we consider the energy density functional
\begin{align}
\label{eq:LinearWaveEnergy}
W(u) = \frac{c^2}{2} \nabla u^\T \nabla u,
\end{align}
where $c$ is the propagation speed.
Invoking this energy functional in~\cref{eq:GeneralSystem}, one retrieves the acoustic wave equation.
On the other hand, the linearized elastodynamic equations for compressible homogeneous and isotropic materials are obtained by employing the energy density functional
\begin{align}
\label{eq:LinearElastEnergy}
W(\bmu) = \frac{\lambda}{2} \tr (\bveps(\bmu))^2 + \mu \bveps(\bmu) \colon \bveps(\bmu),
\end{align}
where $\lambda$ and $\mu$ are the Lam\'e parameters and $\bveps(\bmu) = \frac{1}{2}\left(\nabla \bmu + \nabla \bmu^\T\right)$.
Lastly, we consider the nonlinear response of a compressible neo-Hookean material by invoking the energy density functional
\begin{align}
\label{eq:NeoHookeanEnergy}
W(\bmu) &= \frac{\lambda}{2} \ln(\det(\bfF(\bmu)))^2 - \mu \ln(\det(\bfF(\bmu))) + \frac{\mu}{2} \big( \tr\left(\bfF(\bmu)^\T \bfF(\bmu)\right) - \tr\left(\bfI\right) \big),
\end{align}
where $\bfF(\bmu) = \bfI + \nabla{\bfu}$.

Note that the time-harmonic form of the acoustic wave equation is equivalent to the Helmholtz equation.
In the next subsection, we give a short summary of several mathematical aspects of the Helmholtz equation which are used in the sequel.

\subsection{Helmholtz equation}
\label{sec:helmholtz_intro}

Let $\alpha = -\zI k$, $\beta = 1$, and $\Gamma_\mathrm{D} = \partial \Omega$.
In this setting, \cref{eq:TimeHarmonicSystem} results in the Helmholtz equation with impedance boundary conditions:
\begin{equation}
\label{eq:Helmholtz}
\begin{alignedat}{3}
-\Delta u - k^2 u &= f
\qquad
&&\text{in } \Omega
,
\\
\frac{\partial u}{\partial \bmn} - \zI k u &= g
\qquad
&&\text{on } \partial \Omega
.
\end{alignedat}
\end{equation}
For the sake of completeness, we now give a brief summary of results from \cite{melenk2011wavenumber, esterhazy2014analysis}.

Begin with a fixed wave number $1 \leq k_0 \leq k \leq k_1$ and define the $k$-dependent norm $\|u\|_{\mcH}^2 = \|\nabla u\|_{L^2(\Omega)}^2 + k^2 \| u\|_{L^2(\Omega)}^2$.
Next, assume that $\Omega$ is convex and that the domain mapping $\bvarphi\colon \hat{\Omega} \rightarrow \Omega$ from the reference domain $\hat{\Omega}$ to the physical domain $\Omega$ is smooth.
Let $\bmJ(\hat{\bmx})$ be the Jacobian of $\bvarphi(\hat{\bmx})$, $\det(\bmJ) > 0$.
We define the sesquilinear forms
\begin{equation}
\label{eqn:weakforms}
\begin{aligned}
a(u,v) &= \integral{\Omega}{}{\nabla u \cdot \nabla \conj{v}}{\bmx} = \integral{\hat{\Omega}}{}{\bmJ^{-\T}\hat{\nabla} \hat{u} \cdot \bmJ^{-\T}\hat{\nabla} \conj{\hat{v}} \det(\bmJ)}{\hat{\bmx}},\\
m(u,v) &= \integral{\Omega}{}{u \conj{v}}{\bmx} = \integral{\hat{\Omega}}{}{\hat{u} \conj{\hat{v}} \det(\bmJ)}{\hat{\bmx}},\\
b(u,v) &= \integral{\partial \Omega}{}{u \conj{v}}{\bmx} = \integral{\partial \hat{\Omega}}{}{\hat{u} \conj{\hat{v}}\, \det(\bmJ)\hspace{0.5pt}\|\bmJ^{-\T}\bmn\|}{\hat{\bmx}},
\end{aligned}
\end{equation}
where $\|\,\cdot\,\|$ denotes the Euclidean norm on $\R^n$.
The functions $\hat{u}$ on the reference domain $\hat{\Omega}$ are defined by the identity $\hat{u} = u \circ \bvarphi$.

Let $V_h \subset H^1(\Omega)$ be a finite-dimensional subspace with basis functions of order $p\in\N$ corresponding to a grid of length $h$.
In particular, assume that $p \geq c_p \log k_1$, for a suitable constant $c_p \in \R_{>0}$, and $h \leq c_h \frac{\log k_1}{k_1}$,
for a properly selected constant $c_h \in \R_{>0}$.
The interested reader is referred to \cite[Assumpt.~4.1]{esterhazy2014analysis} for more details on these assumptions.
According to \cite[Prop.~2.1]{esterhazy2014analysis}, the discrete variational Helmholtz formulation,
\begin{equation}
\left\{
\begin{alignedat}{3}
&\text{Find } u_h \in V_h
\text{ satisfying}
\quad
\\
&a(u_h,v) - k^2 m(u_h,v) - k \zI  b(u_h, v) = \integral{\Omega}{}{f \conj{v}}{x} + \integral{\partial \Omega}{}{g \conj{v}}{x}
\quad
\text{for all }
v\in V_h
\,,
\end{alignedat}
\right.
\label{eq:HelmholtzWeakForm}
\end{equation}
has a unique solution.
Let $u \in H^1(\Omega)$ be the solution of \cref{eq:HelmholtzWeakForm} over the space $H^1(\Omega)$.
By \cite[Prop.~8.1.3]{melenk1995generalized}, $\|u\|_\mcH \leq C(k,\Omega) \big(\|f\|_{H^1(\Omega)^\prime} + \|g\|_{H^{-1/2}(\partial\Omega)}\big)$, where $C(k,\Omega)>0$ is a wave number and domain-dependent constant.

Let the symbol $\lesssim$ denote inequality by a generic positive constant, independent of $k$ and $h$.
According to \cite[Prop.~8.1.4]{melenk1995generalized} and \cite{cummings2006sharp},
\begin{subequations}
\begin{equation}
\label{eq:HnormSolution}
	\|u\|_\mcH
	\lesssim
	\|f\|_{L^2(\Omega)} + \|g\|_{L^2(\partial\Omega)}
	,
\end{equation}
when $\Omega$ is convex.
If, in addition, $g=0$ and $f\in H^1(\Omega)$, then it also holds that
\begin{equation}
\label{eq:HnormSolutiong=0}
	\|u\|_\mcH
	\lesssim
	k^{-1}
	\|f\|_{H^1(\Omega)}
	,
\end{equation}
\end{subequations}
by \cite[Lemma~3.4]{esterhazy2014analysis}.
According to \cite[Cor.~4.6]{esterhazy2014analysis}, the bounds
\begin{subequations}
\begin{align}
\label{eq:Hnormerror}
	\|u - u_h\|_{\mcH}
	&\lesssim
	(hk)^p \big( \|f\|_{H^{p-1}(\Omega)} + \|g\|_{H^{p-1/2}(\partial\Omega)}\big)
	,
\end{align}
hold for convex domains with regularity $p-1$ and $f\in H^{p-1}(\Omega)$, $g \in H^{p-1/2}(\partial\Omega)$.
Furthermore, if one additionally assumes that $g = 0$, then one has the improved estimate
\begin{align}
\label{eq:Hnormerrorg=0}
	\|u - u_h\|_{\mcH}
	&\lesssim
	(hk)^p k^{-1}\|f\|_{H^{p-1}(\Omega)}
	.
\end{align}
\end{subequations}
Again, this follows from \cite[Cor.~4.6]{esterhazy2014analysis}.
Evidently, by the bounds above, uniform stability, i.e.,
\begin{equation}
	\|u_h\|_\mcH \lesssim \|u\|_\mcH
	\,,
\label{eq:UniformStability}
\end{equation}
is obtained for all wave numbers $k>0$.
For more details, as well as numerous generalizations of the bounds above, the interested reader is referred to \cite{esterhazy2014analysis} and the references therein.

Taking $u_h = \sum_{i=1}^N \sfu_i \phi_i$, where $\{\phi_i\}_{i=1}^N$ is a basis for $V_h$, problem \cref{eq:HelmholtzWeakForm} induces the following matrix equation for the coefficient vector $\sfu = [\sfu_1,\sfu_2,\ldots,\sfu_N]^\T$:
\begin{equation}
\label{eq:SurrogateEoM}
\sfK \sfu - k^2 \sfM \sfu - k \zI \sfB \sfu
=
{\sff}
,
\end{equation}
\changed{where $\sfK_{ij} = a(\phi_j,\phi_i)$, $\sfM_{ij} = m(\phi_j,\phi_i)$, $\sfB_{ij} = b(\phi_j,\phi_i)$, and $\sff_i = \integral{\Omega}{}{f \conj{\phi_i}}{x} + \integral{\partial \Omega}{}{g \conj{\phi_i}}{x}$.}
In the next section, we replace~\cref{eq:SurrogateEoM} by a closely related approximation (i.e., surrogate).
\section{Surrogate matrices: Exploiting basis structure} \label{sec:exploiting_basis_structure}

In this section, we illustrate the main ingredients of the surrogate matrix methodology in Galerkin IGA, using the Helmholtz equation as an example.
The goal is to show how to replace~\cref{eq:SurrogateEoM} by some closely related equation
\begin{equation}
\label{eq:SurrogateEoMSurrogate}
    \tilde{\sfK}\tilde{{\sfu}} - k^2 \tilde{\sfM}\tilde{\sfu} - k \zI \sfB \tilde{\sfu}
    =
    {\sff}
    ,
\end{equation}
where $\tilde{\sfM} \approx \sfM$ and $\tilde{\sfK} \approx \sfK$ are faster to assemble, and the two solutions $\sfu \approx \tilde{\sfu}$ are close to identical.
Note that we choose not to replace the matrix $\sfB$.
Its assembly cost is of reduced complexity, since only basis functions at the boundaries need to be considered, and this $\sfB$ is ultimately much sparser than either $\sfK$ or $\sfM$.
The first ingredient of the surrogate matrix construction, is the concept of cardinal B-splines.

\subsection{Cardinal B-splines} \label{sub:cardinal_b_splines_and_nurbs}

Every univariate B-spline basis $\{{b}_k\}_{k=1}^{m}$ is defined by an ordered multiset, or \emph{knot vector}, $\Xi = \{\xi_1,\ldots,\xi_{m+p+1}\}$ \cite{hughes2005isogeometric}.
From now on, we assume that every such $\Xi$ is an \emph{open uniform knot vector} on the unit interval $[0,1]$.
That is, $\xi_1,\ldots,\xi_{p+1}=0$, $\xi_{m+1},\ldots,\xi_{m+p+1}=1$, and $\xi_{k+1} - \xi_k = \frac{1}{m-p}$, otherwise.
For large enough $m$, such knot vectors deliver a vast majority of translation invariant basis functions, such as those depicted in gray in~\cref{fig:xtildek}.
These functions are called \emph{cardinal B-splines} \cite{schoenberg1973cardinal,schoenberg1946contributionsA,schoenberg1946contributionsB}.
We hereby refer to $h = \max_{1\leq k\leq m-1}|\xi_{k+1}-\xi_{k}| = \frac{1}{m-p}$ as the \emph{mesh size} parameter and define $\tilde{x}_k = (k - \frac{p+1}{2})\cdot h$, for each ${k} = p+1,\ldots,m-p$.
See \cref{fig:xtildek} for an illustration of the points $\tilde{x}_k$ and the mesh size $h$.

\begin{figure}
\centering
\includegraphics[height=8em]{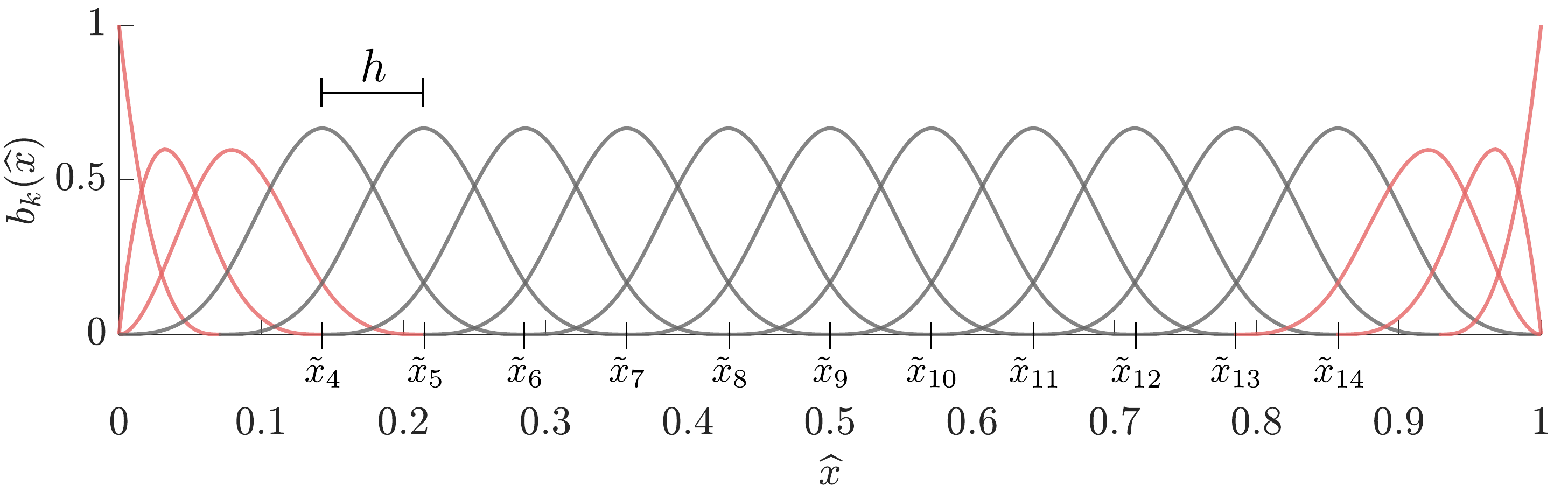}
\qquad
\includegraphics[height=8em]{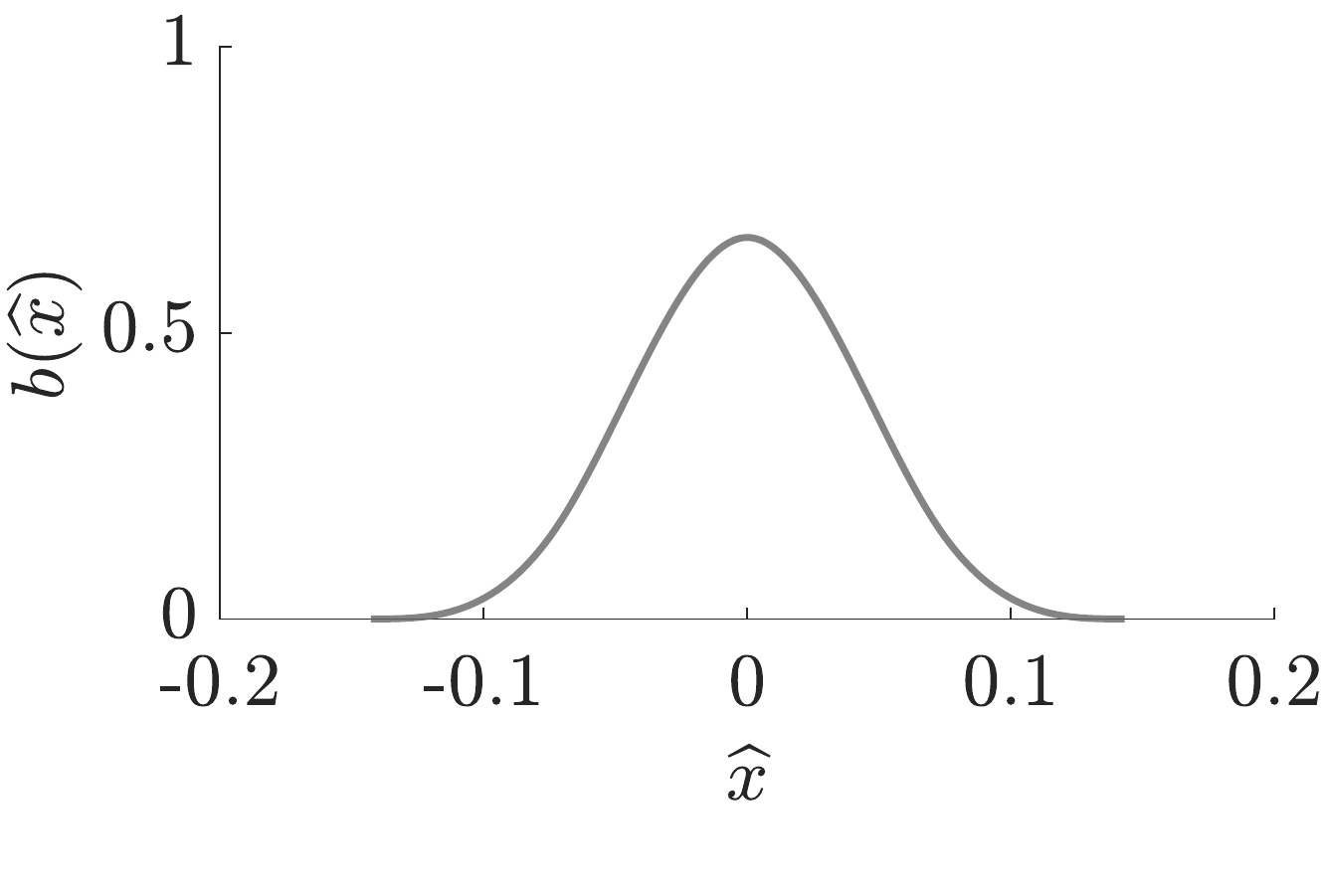}
\caption{\label{fig:xtildek} $1$D B-spline basis functions $\{b_k\}$ with cardinal B-splines in gray. These B-splines come from a third-order $p=3$ uniform knot vector with $m = 17$. Note the points $\tilde{x}_k$ for each ${k} = p+1,\ldots,m-p$ and the mesh size $h$ (left). Each gray basis function is equivalent, up to translation, to the function $b(\hat{x})$ (right).} 
\end{figure}

The open uniform knot vectors described above generate $m-2p$ univariate cardinal B-spline basis functions which can each be expressed as ${b}_{k}(\hat{x}) = {b}(\hat{x} - \tilde{x}_k)$, where ${b}(\hat{x})$ is a function centered at the origin, as depicted on the right of \cref{fig:xtildek}.
Just as in~\cite{drzisga2019igasurrogate}, we do not consider NURBS spaces with different polynomial orders $p_1,\ldots,p_n$ in each Cartesian direction.
Therefore, the tensor product definition of the multivariate B-spline basis, $\{\hat{B}_{\bmi}(\hat{\bmx})\}$, immediately delivers $(m-2p)^n$ \emph{multivariate} cardinal B-splines, $\hat{B}_{\bmi}(\hat{\bmx}) = \hat{B}(\hat{\bmx} - \tilde{\bmx}_{\bmi})$, where $\tilde{\bmx}_{\bmi} = \big(\tilde{x}_{i_1},\ldots, \tilde{x}_{i_n})$ and $\hat{B}({\bmx}) = b({{x}_{1}})\cdots b({{x}_{n}})$.

Here and from now on, we identify every global index $i\in\mcI = \{1,\ldots, N=m^n\}$ with a multi-index $\bmi = (i_1,\ldots,i_n)$, $1\leq i_k\leq m$, through the colexicographical relationship $i = i_1 + (i_2 -1) m + \cdots + (i_n -1) m^{n-1}$.
For future reference, we denote the set of all such $\tilde{\bmx}_{\bmi} = \tilde{\bmx}_{i}$ by $\tilde{\bbX}$.
Notice that the ratio of cardinal B-spline basis functions to total B-spline basis functions, $\big(\frac{m-2p}{m}\big)^n$, quickly tends to unity as $m$ increases.

\subsection{Stencil functions} \label{sub:stencil_functions}

In Galerkin methods, stencil functions provide an explicit functional relationship between entries in the global coefficient matrices, so long as the underlying basis has a particular structure.
Here, we recall a simple definition of stencil functions which comes about by exploiting the structure of cardinal B-splines.
For a generalization to NURBS bases made out of cardinal B-splines, see \cite{drzisga2019igasurrogate}.
Meanwhile, for a description of stencil functions derived from simplicial basis functions, see \cite{drzisga2018surrogate}. 

Begin by recalling the sesquilinear forms $m(\cdot,\cdot)$ and $a(\cdot,\cdot)$ defined in~\cref{eqn:weakforms} and the notation from \Cref{sub:cardinal_b_splines_and_nurbs}.
For $\hat{B} \colon\R^n \to \R$, let us consider the following scalar-valued functions:
\begin{equation}
  \mcM(\tilde{\bmx},\tilde{\bmy})
  =
    m(\hat{B}(\cdot-\tilde{\bmy}),\hat{B}(\cdot-\tilde{\bmx}))
  \,,
  \qquad
  \mcK(\tilde{\bmx},\tilde{\bmy})
  =
    a(\hat{B}(\cdot-\tilde{\bmy}),\hat{B}(\cdot-\tilde{\bmx}))
  \,.
\label{eq:TemporaryStencilFunction}
\end{equation}
It may be readily observed that
\begin{equation}
\label{eq:KeyIdentity}
  \mcM(\tilde{\bmx}_i,\tilde{\bmx}_j)
  =
  [\sfM]_{ij}
  \qquad
  \text{and}
  \qquad
  \mcK(\tilde{\bmx}_i,\tilde{\bmx}_j)
  =
          [\sfK]_{ij}
\end{equation}
for every $i,j\in\mcI$.

Notice that the mass matrix $\sfM$ and the stiffness matrix $\sfK$ are always sparse simply because $m(\hat{B}_{j},\hat{B}_{i})$ and  $a(\hat{B}_{j},\hat{B}_{i})$ both vanish whenever the supports of $\hat{B}_{j}$ and $\hat{B}_{i}$ do not overlap.
For the same reason, both $\mcM(\tilde{\bmx},\tilde{\bmy})$ and $\mcK(\tilde{\bmx},\tilde{\bmy})$ return zero whenever $\|\tilde{\bmy}-\tilde{\bmx}\| \geq 0$ is large enough.

In order to demarcate from these trivial outcomes, we rewrite $\mcM(\tilde{\bmx},\tilde{\bmy})$ and $\mcK(\tilde{\bmx},\tilde{\bmy})$ in terms of $\tilde{\bmx}$ and a translation $\bdelta = \tilde{\bmy}-\tilde{\bmx}$ by defining
\begin{equation}
\label{eq:DefinitionOfStencilFunction}
    \mcM_{\bdelta}(\tilde{\bmx}) = \mcM(\tilde{\bmx} ,\tilde{\bmx} + \bdelta)
    \qquad\text{and}\qquad
    \mcK_{\bdelta}(\tilde{\bmx}) = \mcK(\tilde{\bmx} ,\tilde{\bmx} + \bdelta).
\end{equation}
Taking $\bdelta = \tilde{\bmx}_j-\tilde{\bmx}_i$, we clearly have
\begin{equation}
  [{\sfM}]_{ij}
  =
  \mcM_{\bdelta}(\tilde{\bmx}_i)
  \qquad\text{and}\qquad
  [{\sfK}]_{ij}
  =
  \mcK_{\bdelta}(\tilde{\bmx}_i)
            .
\label{eq:StencilRoughDefinition}
\end{equation}
For this reason, we only need to pay attention to $\bdelta\in\scD$, where
\begin{equation}
    \scD
    =
    \{\tilde{\bmx}_j-\tilde{\bmx}_i : \supp(\hat{B}(\cdot-\tilde{\bmx}_i))\cap\supp(\hat{B}(\cdot-\tilde{\bmx}_j)) \neq \emptyset,~i,j\in\mcI\}
    \,.
    \end{equation}
Using the fact that each point in $\mcI$ is uniformly spaced through the reference domain, one can easily show that $\#\scD = (2p+1)^n$.
Thus, $\scD$ can be seen as a finite index set.
We call each function $\mcM_{\bdelta}$ and $\mcK_{\bdelta}$, enumerated by $\bdelta\in\scD$, a \emph{stencil function}.
The interested reader is referred to \cite{drzisga2018surrogate,drzisga2019igasurrogate} as well as \Cref{fig:OnePatchGeometry,fig:TwoPatchGeometry} for various pictures of stencil functions.

\begin{remark}
In a multi-patch setting, the physical domain $\Omega$ is partitioned into a finite number of disjoint subdomains $\overline{\Omega} = \bigcup_{\ell=1}^{L} \overline{\Omega^{(\ell)}}$.
Each \emph{patch} $\Omega^{(\ell)}$ is identified with the same parametric domain $\hat{\Omega}$ via a unique isogeometric transformation $\bvarphi^{(\ell)}(\hat{\Omega}) = \Omega^{(\ell)}$.
For this reason, extending the definition of the stencil functions to account for multi-patch geometries is straightforward.
Indeed, one only needs to define a separate set of stencil functions $\mcM_\bdelta^{(\ell)}$ and $\mcK_\bdelta^{(\ell)}$, for each patch index $\ell$.
\end{remark}

\subsection{Surrogate stencil functions} \label{sub:surrogate_stencil_functions}

The equations in~\cref{eq:StencilRoughDefinition} are simply functional relationships between the entries of each submatrix $\sfM$ and $\sfK$ and the arguments of $\mcM_{\bdelta}$ and $\mcK_{\bdelta}$, respectively.
In other words, evaluating $\mcM_{\bdelta}$ at any point $\tilde{\bmx}_i\in\tilde{\bbX}$ is operationally equivalent to computing the matrix entry $[{\sfM}]_{ij}$.
Therefore, evaluating $\mcM_{\bdelta}(\tilde{\bmx}_i)$, for each $\bdelta\in\scD$, requires computing precisely all the non-zero coefficients in the $i^\text{th}$ row of $\sfM$.
The same observation clearly also holds when evaluating $\mcK_{\bdelta}$.

If stencil functions are smooth, then they may be accurately approximated by their values at a relatively small number of points $\tilde{\bmx}_{i^\mathrm{s}}\in\hat{\Omega}$.
For our purposes, it is enough to let $\tilde{\bbX}^\mathrm{s}\subset \tilde{\bbX}$ be the set of all such \emph{sample points} $\tilde{\bmx}_{i^\mathrm{s}}$ and let $\mcI^\mathrm{s} \subset \mcI$ be the corresponding set of indices.
This procedure first requires collecting all pairs $(\tilde{\bmx}_{i^\mathrm{s}},[\sfM]_{i^\mathrm{s}j})$, for every $\tilde{\bmx}_j-\tilde{\bmx}_{i^\mathrm{s}} \in\scD$ and $i^\mathrm{s} \in \mcI^\mathrm{s}$, but may be done simply by modifying existing assembly algorithms to compute only the required rows.
Entries in surrogate matrices may then be generated by just evaluating the approximated stencil functions at the remaining points in $\tilde{\bbX}\setminus\tilde{\bbX}^\mathrm{s}$ and filling in the corresponding rows $\mcI\setminus\mcI^\mathrm{s}$.

Define
\begin{equation}
  [\tilde{\sfM}]_{ij}
  =
  \tilde{\mcM}_{\bdelta}(\tilde{\bmx}_i)
  \qquad\text{and}\qquad
  [\tilde{\sfK}]_{ij}
  =
  \tilde{\mcK}_{\bdelta}(\tilde{\bmx}_i)
            \,,
\label{eq:SurrogateRoughDefinition}
\end{equation}
where $\tilde{\mcM}_{\bdelta}$ and $\tilde{\mcK}_{\bdelta}$ are such approximations of ${\mcM}_{\bdelta}$ and $\mcK_{\bdelta}$, respectively.
If these so-called \emph{surrogate stencil functions}, $\tilde{\mcM}_{\bdelta}$ and $\tilde{\mcK}_{\bdelta}$, are expressed in an easily evaluated basis, then $\tilde{\sfM}$ (resp. $\tilde{\sfK}$) can be formed much faster than ${\sfM}$ (resp. ${\sfK}$), simply because of the numerical integration that is avoided.
Moreover, for large enough problems, the coefficients in the surrogate stencil functions should require significantly less storage than the coefficients in the original matrix they are used to approximate.
This makes simply storing the stencil function coefficients and reading out evaluations of $\tilde{\mcM}_{\bdelta}$ or $\tilde{\mcK}_{\bdelta}$ very desirable during each matrix-vector multiply in matrix-free methods, especially when the matrices themselves cannot fit in main memory; see, e.g., \cite{drzisga2018surrogate}.

In this paper, we construct surrogate stencil functions by interpolating ${\mcM}_{\bdelta}$ and ${\mcK}_{\bdelta}$ with a uniform B-spline basis of order $q\geq 0$ with a quasi-uniform knot vector $\tilde{\bm{\Xi}} = \tilde{\Xi}_1\times\cdots\times\tilde{\Xi}_n$, where each knot $\tilde{\bxi}_{\bmi}\in\tilde{\bm{\Xi}}$ is taken from $\tilde{\bbX}^\mathrm{s}$.
Just as the accuracy of the discrete solution $u_h$ is affected by the mesh size parameter $h$, the accuracy of surrogate stencil functions is affected by a sampling length
\begin{equation}
  H
  =
  \max_{|\bmj| = 1,\bmi}\big\{\|\tilde{\bxi}_{\bmi+\bmj}-\tilde{\bxi}_{\bmi}\|_{\infty} \,:\, \tilde{\bxi}_{\bmi},\tilde{\bxi}_{\bmi+\bmj} \in \tilde{\bm{\Xi}}\big\}
  .
\label{eq:Hdefn}
\end{equation}

As a simplifying accommodation, we assume that all stencil functions $\mcM_\bdelta$ and $\mcK_\bdelta$ are defined at every sampling point $\tilde{\bmx}_{i^\mathrm{s}}\in\tilde{\bbX}^\mathrm{s}$.
As argued in \cite[Section~4]{drzisga2019igasurrogate}, this implies that $\tilde{\bbX}^\mathrm{s}\subset\tilde{\Omega}\subsetneq\hat{\Omega}$, where
\begin{equation}
  \tilde{\Omega}
  =
  \bigg[\frac{3p+1}{2(m-p)},1-\frac{3p+1}{2(m-p)}\bigg]^n
  \,.
\end{equation}
For a more complete description of the interpolation strategy used in the coming experiments, as well as the resulting analysis, see \Cref{sub:implementation} and \cite{drzisga2019igasurrogateimpl}.
Note that explicit interpolation is not at all required to generate an accurate surrogate.
Indeed, a different least-squares regression approach, with a high-order polynomial basis, was successfully applied in \cite{drzisga2018surrogate}.
Many approximation alternatives remain to be investigated.

\subsection{Structure-preserving surrogates} \label{sub:surrogate_matrices}

Constructing the complete surrogate matrices $\tilde{\sfM}$ and $\tilde{\sfK}$ out of the corresponding stencil functions requires the consideration of interactions with non-cardinal basis functions.
The simplest way to account for the entries of $\tilde{\sfM}$ and $\tilde{\sfK}$ which are not defined via~\cref{eq:SurrogateRoughDefinition} is to compute them directly with numerical quadrature, as in traditional IGA assembly algorithms.
This is the choice we make here, however, alternative choices are available by using additional stencil functions which exploit symmetries on lower-dimensional planes, as described in \cite[Section~3.6]{drzisga2019igasurrogate}.

Exploiting the symmetry of the mass matrix, we define 
\begin{equation}
  [\tilde{\sfM}]_{ij}
  =
  \begin{cases}
  \tilde{\mcM}_{\bdelta}(\tilde{\bmx}_i) & \text{if } \tilde{\bmx}_i,\tilde{\bmx}_j\in\tilde{\Omega},~i\leq j,\\
  [\tilde{\sfM}]_{ji} & \text{if } \tilde{\bmx}_i,\tilde{\bmx}_j\in\tilde{\Omega},~i>j,\\
  [\sfM]_{ij} & \text{otherwise}.
  \end{cases}
\label{eq:DefinitionOfSurrogateMatrixSymmetric}
\end{equation}
Note that this definition requires interpolating $((2p+1)^n+1)/2$ stencil functions.
Constructing the surrogate stiffness matrix $\tilde{\sfK}$ could follow in the same manner as~\cref{eq:DefinitionOfSurrogateMatrixSymmetric}, however, a surrogate matrix with better approximation properties can be found if we attempt to preserve part of the kernel of the original matrix $\sfK$.

Note that the kernel of $\sfK$ contains all repeated coefficient vectors, $\spann\{\, [1,1,\ldots,1]^\T \}$.
Indeed, because $a(1,w) = a(w,1) = 0$ for all $w\in H^1(\Omega)$ and because B-splines and NURBS have the partition of unity property $\sum_j B_j(\bmx) = 1$, it holds that
\begin{equation}
  0
  =
  a(1,\phi_{i})
  =
  \sum_j a(B_j,\phi_{i})
  =
  \sum_j [\sfK]_{ij}
  \,,
  \quad
  \text{for each }
  i=1,\ldots,N
  .
\end{equation}
Note that this identity may be rewritten
\begin{equation}
\label{eq:RowSumProperty}
  [\sfK]_{ii}
  =
  -\sum_{j\neq i} [\sfK]_{ij}
  \,.
\end{equation}
For this reason, we pose the following symmetric kernel-preserving definition for the surrogate stiffness matrix:
\begin{equation}
  [\tilde{\sfK}]_{ij}
  =
  \begin{cases}
  \tilde{\mcK}_{\bdelta}(\tilde{\bmx}_i) & \text{if } \tilde{\bmx}_i,\tilde{\bmx}_j\in\tilde{\Omega},~i < j,\\
  [\tilde{\sfK}]_{ji} & \text{if } \tilde{\bmx}_i,\tilde{\bmx}_j\in\tilde{\Omega},~i>j,\\
  [\sfK]_{ij} & \text{in all other cases where } i \neq j,\\
  -\sum_{k\neq i} [\tilde{\sfK}]_{ik} & \text{if } i = j.\\
  \end{cases}
\label{eq:DefinitionOfSurrogateMatrixSymmetricKernel}
\end{equation}
Note that definition~\cref{eq:DefinitionOfSurrogateMatrixSymmetricKernel} requires interpolating $((2p+1)^n-1)/2$ stencil functions.
We define the corresponding surrogate sesquilinear forms $\tilde{a}(u,v) = \bar{\sfv}^\T \tilde{\sfK} \sfu$ and $\tilde{m}(u,v) = \bar{\sfv}^\T \tilde{\sfM} \sfu$ where $\sfu$ and $\sfv$ are the coefficient vectors of $u$ and $v$ in the $\{\phi_i\}_{i=1}^N$ basis, respectively.

\begin{remark}
  Definitions~\cref{eq:DefinitionOfSurrogateMatrixSymmetric,eq:DefinitionOfSurrogateMatrixSymmetricKernel} also apply in the obvious way to the multi-patch setting.
  Indeed, they can be simply used to define every patch-wise coefficient matrix $\tilde{\sfM}^{(\ell)}$ and $\tilde{\sfK}^{(\ell)}$ using the corresponding patch-wise stencil functions $\mcM_\bdelta^{(\ell)}$ and $\mcK_\bdelta^{(\ell)}$, respectively.
\end{remark}

\begin{remark}
The definition of the surrogate mass matrix $\tilde{\sfM}$ in \cref{eq:DefinitionOfSurrogateMatrixSymmetric} does not preserve the exact volume of the domain in the sense that $\sum_i \sum_j [\tilde{\sfM}]_{ij} \neq \integral{\Omega}{}{1}{\bmx}$; cf. \cite[Remark~5.1]{drzisga2019igasurrogate}.
However, the volume may still be preserved by changing its construction in the following way.
Let $\sfD$ be a diagonal matrix with $[\sfD]_{ii} = \integral{\Omega}{}{B_i(\bmx)}{\bmx}$ for each $i$.
The true stiffness matrix can be split into $\sfM = \sfD + \sfM_0$ where $[\sfM_0]_{ij} = [\sfM]_{ij}$ for all $j \neq i$ and $[\sfM_0]_{ii} = -\sum_{j\neq i} [\sfM]_{ij}$ for all $i$.
Since $\sfM_0$ has the same structure and zero row-sum property as $\sfK$, the surrogate matrix $\tilde{\sfM}_0$ may be defined as in \cref{eq:DefinitionOfSurrogateMatrixSymmetricKernel}.
Therefore, defining the mass matrix surrogate as $\tilde{\sfM} = \sfD + \tilde{\sfM}_0$ yields the desired property $\sum_i \sum_j [\tilde{\sfM}]_{ij} = \sum_i \sum_j [\sfM]_{ij} = \integral{\Omega}{}{1}{\bmx}$.
This definition only requires the additional assembly of the diagonal matrix $\sfD$ which can be stored in a vector.
The required quadrature formula may also be of lower accuracy, because functions of order $p$ and not $2p$ need to be integrated.
This observation is not further investigated here.
\end{remark}

\begin{remark}
\label{remark:vectorvalued}
The majority of the definitions above generalize immediately to variational problems with vector-valued solutions.
Nevertheless, in the case of linear elasticity, preserving all of the infinitesimal rigid body motions in the definition of a surrogate elasticity stiffness matrix, is more complicated and expensive than preserving the one-dimensional kernel of $\sfK$, as done in~\cref{eq:DefinitionOfSurrogateMatrixSymmetricKernel}.
Our numerical experiments do not show any significant need to incorporate such a feature.
\end{remark}

\subsection{Smooth geometry transformations} \label{sub:smooth_geometry_transformations}
In many studies (see, e.g., \cite{hughes2005isogeometric,hiemstra2019fast}) the geometry transformation $\bvarphi:\hat{\Omega}\to\Omega$ is not globally smooth.
For illustration, consider the singular transformation depicted in~\Cref{fig:OnePatchMap} which has a singularity coming from the lack of smoothness at the top right corner of the physical domain.
This singularity in the geometry transformation implies a singularity in the determinant of the Jacobian $\bmJ$ present in~\cref{eqn:weakforms}.
In turn, a singularity also appears in the corresponding stencil functions; see, e.g., \Cref{fig:PlateWithHole_OnePatchStencilFunction}.

Singular geometry transformations will usually introduce singular features in the stencil functions.
As singular functions are more difficult to approximate accurately, using unnecessary singular geometry maps should be avoided with surrogate matrix methods.
For instance, in the example above, the singularity may be removed simply by using two patches instead of just one.
In~\cref{fig:PlateWithHole_TwoPatches}, an obvious two-patch geometry parameterization is used and it is obvious to infer that the resulting stencil functions will be globally smooth; \Cref{fig:PlateWithHole_TwoPatches} shows one such representative.

\begin{figure}\centering
  \begin{subfigure}[c]{\textwidth}
  \centering
    \includegraphics[width=0.8\textwidth]{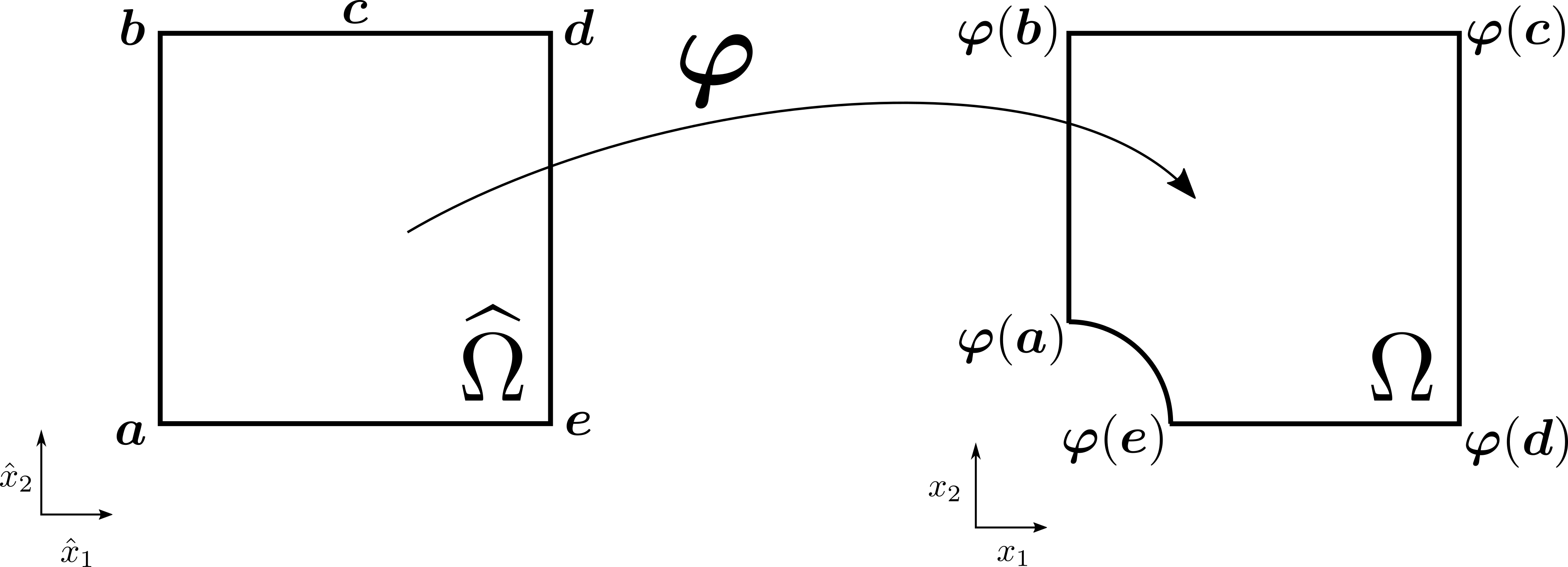}
    \caption{
    \label{fig:OnePatchMap}
    Single geometry map $\bvarphi$ that takes the reference domain $\hat{\Omega}$ to the physical domain $\Omega$.
    }
  \end{subfigure}
    \begin{subfigure}[c]{0.34\textwidth}
  \centering
  \includegraphics[trim=1.8cm 0cm 2cm 0cm,clip=true,height=4.8cm]{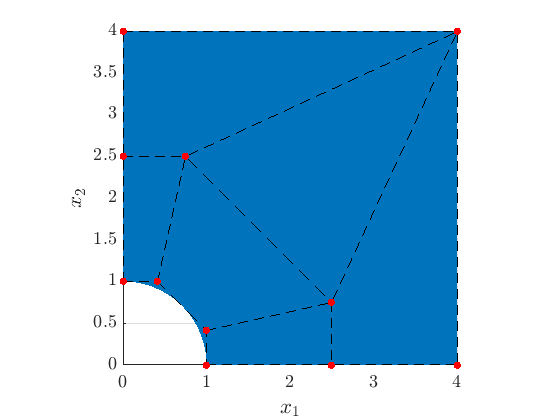}
      \caption{
      \label{fig:PlateWithHole_OnePatch}
      Single patch control net of the physical domain $\Omega$.
      }
  \end{subfigure}
  \quad
  \begin{subfigure}[c]{0.34\textwidth}
  \centering
    \includegraphics[height=4.8cm]{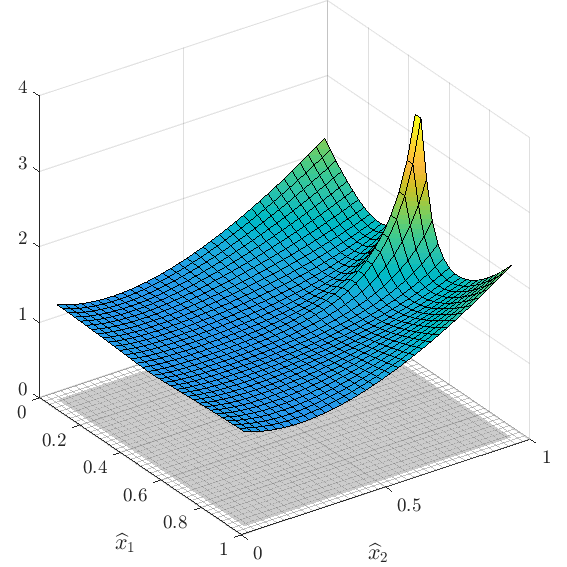}
    \caption{
    \label{fig:PlateWithHole_OnePatchStencilFunction}
    Stencil function of the stiffness matrix for $\delta = (0,0)^\T$ plotted over the reference domain.
    }
  \end{subfigure}
  \caption{
  \label{fig:OnePatchGeometry}
  Geometry map, control net, and stencil function in the case of a single patch geometry.
  }
\end{figure}
\begin{figure}\centering
  \begin{subfigure}[c]{\textwidth}
  \centering
    \includegraphics[width=0.7\textwidth]{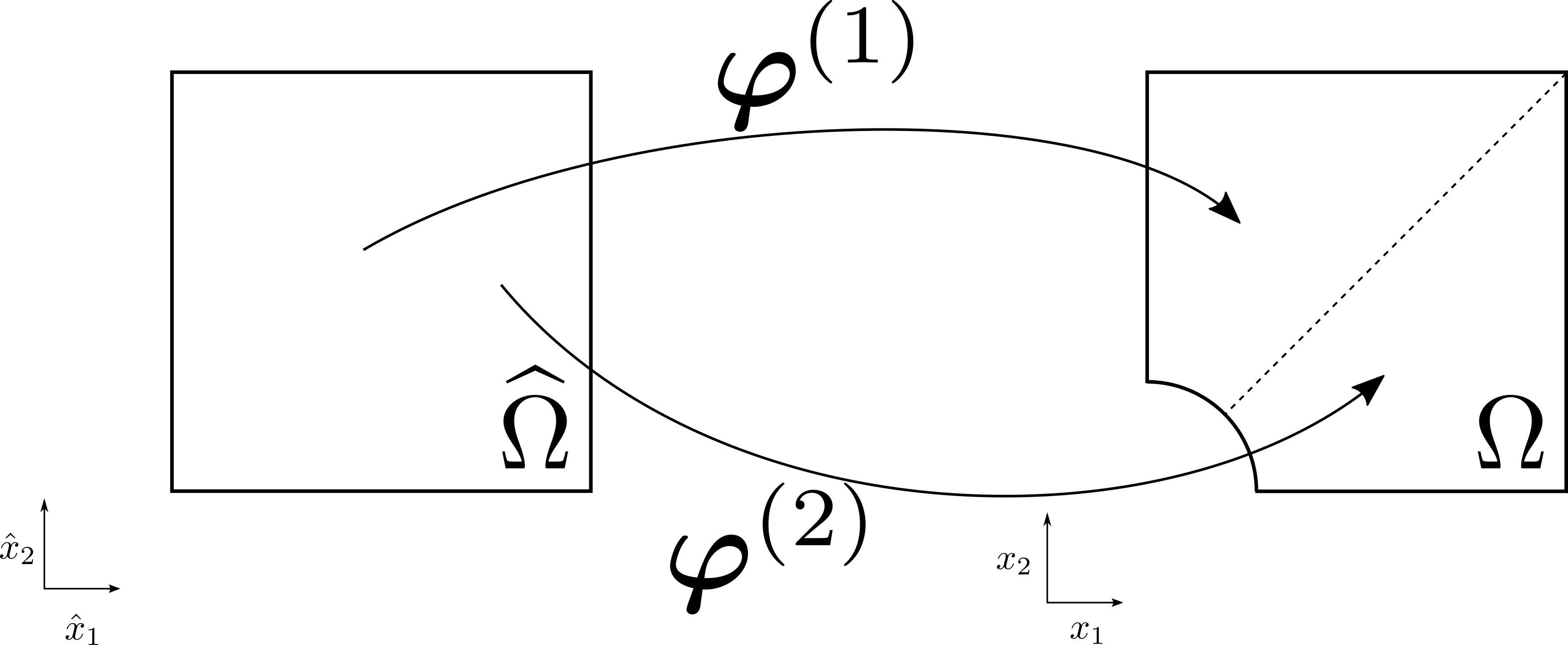}
    \caption{
    \label{fig:TwoPatchMap}
    Two geometry maps $\bvarphi^{(1)}$ and $\bvarphi^{(2)}$ that map the reference domain $\hat{\Omega}$ to the physical domain $\Omega$.
    }
  \end{subfigure}
    \begin{subfigure}[c]{0.32\textwidth}
  \centering
  \includegraphics[trim=1.8cm 0cm 2cm 0cm,clip=true,height=4.5cm]{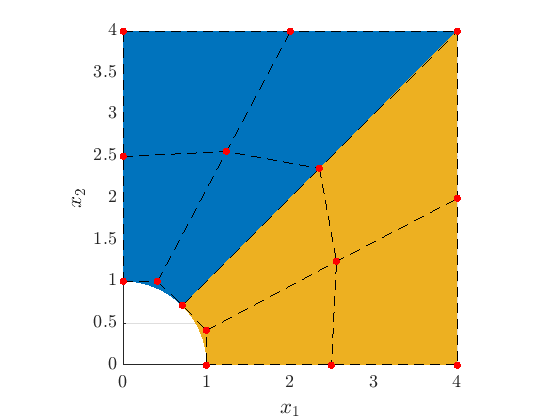}
      \caption{
      \label{fig:PlateWithHole_TwoPatches}
      Two patch control net of the physical domain $\Omega$.
      }
  \end{subfigure}
  \begin{subfigure}[c]{0.32\textwidth}
  \centering
    \includegraphics[height=4.5cm]{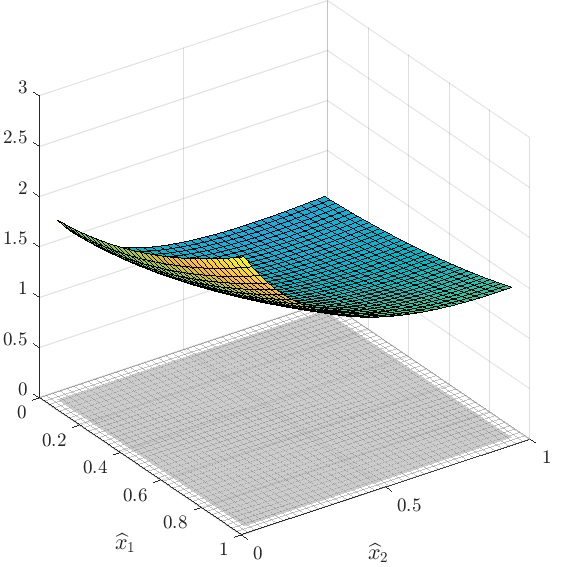}
    \caption{
    \label{fig:PlateWithHole_TwoPatchStencilFunction1}
    Stencil function of the stiffness matrix for $\delta = (0,0)^\T$ and the first patch.
    }
  \end{subfigure}
  \begin{subfigure}[c]{0.32\textwidth}
  \centering
    \includegraphics[height=4.5cm]{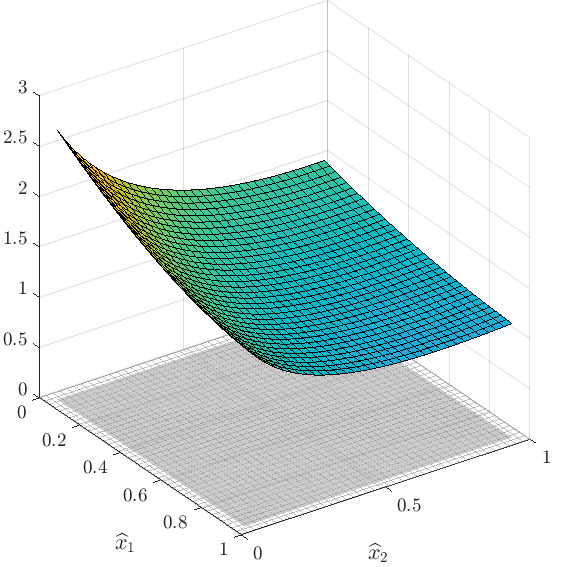}
    \caption{
    \label{fig:PlateWithHole_TwoPatchStencilFunction2}
    Stencil function of the stiffness matrix for $\delta = (0,0)^\T$ and the second patch.
    }
  \end{subfigure}
  \caption{
  \label{fig:TwoPatchGeometry}
  Geometry map, control net, and stencil function in the case of a two patch geometry.
  }
\end{figure}
\section{Surrogate matrices: Theory and applications}
\label{sec:contributions}

In this section, we present an a priori error estimate for the Helmholtz case which shows that the consistency error introduced by the surrogate methodology is \emph{wave number independent}.
Next, we explain how the surrogate methodology is advantageous in wave propagation problems with absorbing boundary conditions.
Finally, we give a short survey of other insights and interpretations which apply for time-dependent and nonlinear problems.
\FloatBarrier
\subsection{A \changed{priori} error estimates for the Helmholtz equation}
The following theorem certifies optimal order convergence of the discretization~\cref{eq:SurrogateEoMSurrogate}, under certain assumptions on $\tilde{a}(u,v) = \bar{\sfv}^\T \tilde{\sfK} \sfu$ and $\tilde{m}(u,v) = \bar{\sfv}^\T \tilde{\sfM} \sfu$.
\changed{
Justification for the stability assumptions made in~\cref{eq:assumption_a,eq:assumption_m} comes from previous work (e.g., \cite{drzisga2019igasurrogate}); for further details, see~\cref{rem:StabilityAssumptions}.
}

\begin{theorem}
\label{thm:helmholtz_surrogate}
Invoke all the hypotheses of \Cref{sec:helmholtz_intro} and define $H$ via~\cref{eq:Hdefn}.
Moreover, let $q_1,q_2 \in \N_0$ and assume that
\begin{subequations}
\label{eq:assumptions}
\begin{align}
\label{eq:assumption_a}
|a(u,v) - \tilde{a}(u,v)| &\lesssim H^{q_1+1} \|\nabla u \|_{L^2(\Omega)} \| \nabla v \|_{L^2(\Omega)},\\
\label{eq:assumption_m}
|m(u,v) - \tilde{m}(u,v)| &\lesssim H^{q_2+1} \| u \|_{L^2(\Omega)} \| v \|_{L^2(\Omega)},
\end{align}
\end{subequations}
for all $u,v\in V_h$.
Then, for all sufficiently small $H$, we have the existence of a unique solution $\tilde{\sfu}$ of \cref{eq:SurrogateEoMSurrogate} and the following a~priori error estimate for $\tilde{u}_h = \sum_i \tilde{\sfu}_i \phi_i$:
\begin{subequations}
\begin{equation}
\label{eqn:apriori_surrogate}
\|u - \tilde{u}_h\|_\mathcal{H}
\lesssim
(hk)^p\big( \|f\|_{H^{p-1}(\Omega)} + \|g\|_{H^{p-1/2}(\partial\Omega)}\big) + H^{q+1}\big(\|f\|_{L^2(\Omega)} + \|g\|_{L^2(\partial\Omega)}\big),
\end{equation}
where $q = \min\{q_1,q_2\}$.
If, in addition, $g=0$, then we have the alternative estimate
\begin{equation}
\label{eqn:apriori_surrogate_g=0}
\|u - \tilde{u}_h\|_\mathcal{H}
\lesssim
k^{-1}\big( (hk)^p  \|f\|_{H^{p-1}(\Omega)}+ H^{q+1}\|f\|_{H^1(\Omega)} \big).
\end{equation}
\end{subequations}
\end{theorem}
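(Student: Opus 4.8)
The strategy is a standard Strang-type variational crime argument, but carried out carefully enough to track the $k$-dependence. Let $\sfu$ be the solution of the true discrete problem~\cref{eq:SurrogateEoM} (equivalently~\cref{eq:HelmholtzWeakForm}) and $u_h = \sum_i \sfu_i\phi_i$; the exact Helmholtz solution is $u$. Since $u$ and $u_h$ are already compared in~\cref{eq:Hnormerror,eq:Hnormerrorg=0}, by the triangle inequality it suffices to bound $\|u_h - \tilde u_h\|_\mathcal{H}$. Introduce the surrogate sesquilinear form $\tilde s(w,v) = \tilde a(w,v) - k^2\tilde m(w,v) - k\zI\, b(w,v)$ and the true form $s(w,v)$ defined analogously with $a,m$. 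The defining equations give $s(u_h,v) = \ell(v)$ and $\tilde s(\tilde u_h,v) = \ell(v)$ for all $v\in V_h$, where $\ell(v) = \int_\Omega f\conj v + \int_{\partial\Omega} g\conj v$. Subtracting, the error $e_h := u_h - \tilde u_h$ satisfies the perturbed Galerkin identity
\begin{equation}
s(e_h, v) = \big(s - \tilde s\big)(\tilde u_h, v) = \big(a-\tilde a\big)(\tilde u_h,v) - k^2\big(m-\tilde m\big)(\tilde u_h,v)\qquad\text{for all } v\in V_h.
\label{eq:perturbedGalerkin_plan}
\end{equation}

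**Main steps.** First I would establish an inf-sup / stability bound for the true discrete form $s$ on $V_h$ that is uniform in $k$: from the uniform stability~\cref{eq:UniformStability} and the a priori bounds~\cref{eq:HnormSolution} one gets, for any $\ell$, a unique discrete solution with $\mathcal{H}$-norm controlled by the data; equivalently there is a constant $\gamma>0$, independent of $h$ and $k$, with $\sup_{0\neq v\in V_h}|s(w,v)|/\|v\|_\mathcal{H} \geq \gamma\|w\|_\mathcal{H}$ for all $w\in V_h$. (This is exactly the content of \cite[Prop.~2.1, Cor.~4.6]{esterhazy2014analysis} rephrased; I would cite it rather than reprove it.) Second, I apply this inf-sup bound to $e_h$ using~\cref{eq:perturbedGalerkin_plan}: choosing a near-maximizing $v$ and invoking the assumptions~\cref{eq:assumption_a,eq:assumption_m},
\begin{equation}
\gamma\,\|e_h\|_\mathcal{H} \;\lesssim\; \sup_{0\neq v\in V_h}\frac{\big|(a-\tilde a)(\tilde u_h,v)\big| + k^2\big|(m-\tilde m)(\tilde u_h,v)\big|}{\|v\|_\mathcal{H}} \;\lesssim\; H^{q_1+1}\|\nabla\tilde u_h\|_{L^2} \cdot 1 + H^{q_2+1} k^2\|\tilde u_h\|_{L^2}\cdot k^{-1},
\end{equation}
where I used $\|\nabla v\|_{L^2}\leq\|v\|_\mathcal{H}$ and $k\|v\|_{L^2}\leq\|v\|_\mathcal{H}$. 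Both terms are bounded by $H^{q+1}\|\tilde u_h\|_\mathcal{H}$ with $q=\min\{q_1,q_2\}$, giving $\|e_h\|_\mathcal{H}\lesssim H^{q+1}\|\tilde u_h\|_\mathcal{H}$. Third — and this is where one must be slightly careful — I close the argument: write $\|\tilde u_h\|_\mathcal{H}\leq\|e_h\|_\mathcal{H}+\|u_h\|_\mathcal{H}$, so for $H$ small enough that $CH^{q+1}\leq\tfrac12$ we can absorb the $\|e_h\|_\mathcal{H}$ term and conclude $\|e_h\|_\mathcal{H}\lesssim H^{q+1}\|u_h\|_\mathcal{H}\lesssim H^{q+1}\|u\|_\mathcal{H}$ using~\cref{eq:UniformStability}. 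This smallness threshold on $H$ simultaneously yields existence and uniqueness of $\tilde u_h$ (the perturbed bilinear form is a small, uniformly bounded perturbation of an inf-sup-stable form, hence itself inf-sup stable on the finite-dimensional space $V_h$). Finally I insert~\cref{eq:HnormSolution} or~\cref{eq:HnormSolutiong=0} for $\|u\|_\mathcal{H}$ and combine with~\cref{eq:Hnormerror} (resp.~\cref{eq:Hnormerrorg=0}) via the triangle inequality to obtain~\cref{eqn:apriori_surrogate} (resp.~\cref{eqn:apriori_surrogate_g=0}); in the $g=0$ case the $k^{-1}$ prefactor propagates cleanly from the sharper data bound.

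**Anticipated obstacles.** The delicate point is \emph{not} the estimate itself but making sure the $k$-uniformity is genuinely preserved. The subtlety is that the constant $\gamma$ in the discrete inf-sup bound for the indefinite Helmholtz form depends, in general, on $k$ and $h$ together, and it is only the combined hypotheses $p\gtrsim\log k_1$, $hk\lesssim 1$ of \Cref{sec:helmholtz_intro} that force $\gamma$ to be bounded below independently of $k$; I must invoke those hypotheses explicitly and cannot get away with a naive Céa-type argument. A secondary point is the bookkeeping of which norm sits on $\tilde u_h$: the factor $k^2$ in front of the mass-form perturbation looks dangerous, but it is exactly compensated because $\|\tilde u_h\|_{L^2}\leq k^{-1}\|\tilde u_h\|_\mathcal{H}$ and the test function loses another power of $k$ the same way — so the net $k$-weight is $k^2\cdot k^{-1}\cdot k^{-1}=1$, matching the stiffness term. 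Getting this accounting right is what makes the consistency error wave-number independent, and it is the one place where a sign error or a misplaced power of $k$ would silently break the theorem; everything else is routine.
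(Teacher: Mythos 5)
Your proposal is correct and follows essentially the same route as the paper: a Strang-type perturbation argument combining the discrete inf-sup stability, the consistency assumptions~\cref{eq:assumption_a,eq:assumption_m} with the $k$-weighting $k^2\cdot k^{-1}\cdot k^{-1}=1$, and the prior bounds~\cref{eq:HnormSolution,eq:Hnormerror,eq:UniformStability}. The only (cosmetic) difference is that the paper tests $e_h$ against the \emph{surrogate} form $\tilde{\mcA}$, which via $\tilde{\mcA}(e_h,v)=\tilde{\mcA}(u_h,v)-\mcA(u_h,v)$ lands directly on $H^{q+1}\|u_h\|_\mcH$ and avoids your absorption step, whereas you test against the true form and absorb $\|e_h\|_\mcH$ for small $H$ --- both are valid and yield the same estimate.
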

\begin{proof}
Fix $k>0$.
Define the sesquilinear form
\begin{equation}
\mcA(u,v) = a(u,v) - k^2 m(u,v) - k \zI b(u,v)
\quad
\text{for all } u, v \in H^1(\Omega)
,
\end{equation}
and denote its discrete stability constant by $\gamma_h$.
Likewise, define the surrogate sesquilinear form
\begin{equation}
\tilde{\mcA}(u,v) = \tilde{a}(u,v) - k^2 \tilde{m}(u,v) - k \zI b(u,v)
\quad
\text{for all } u, v \in V_h
.
\end{equation}
Observe that
\begin{equation}
\label{eq:proofstepA}
	\tilde{\mcA}(\tilde{u}_h,v)
	=
	\integral{\Omega}{}{f \conj{v}}{x} + \integral{\partial \Omega}{}{g \conj{v}}{x} = \mcA(u_h,v)
	\quad
	\text{for all } v \in V_h
	.
\end{equation}

The assumptions on $h$ and $p$ from~\Cref{sec:helmholtz_intro} imply that
\begin{equation}	
 \gamma_h = \inf_{u \in V_h\setminus\{0\}} \sup_{v \in V_h\setminus\{0\}} \frac{|\mcA(u,v)|}{\|u\|_\mcH\|v\|_\mcH} > 0
 .
\end{equation}
This guarantees uniform stability of the original isogeometric discretization for all sufficiently small $h$.
Our first aim is to demonstrate that a similar property holds for the surrogate discretization given by \cref{eq:SurrogateEoMSurrogate}.
Indeed, observe that for any arbitrary $u \in V_h$,
\begin{align}
\sup_{v \in V_h\setminus\{0\}} \frac{|\tilde{\mcA}(u,v)|}{\|v\|_\mathcal{H}}
&\geq
\sup_{v \in V_h\setminus\{0\}} \frac{|\mcA(u,v)|}{\|v\|_\mathcal{H}} - \sup_{v \in V_h\setminus\{0\}} \frac{|\tilde{\mcA}(u,v) - \mcA(u,v)|}{\|v\|_\mathcal{H}}\\
&\geq \big(\gamma_h - \max\{C_1 H^{q_1+1}, C_2 H^{q_2+1}\}\big) \|u\|_\mathcal{H}
.
\end{align}
Therefore,
\begin{equation}
	\tilde{\gamma}_h = \inf_{u \in V_h\setminus\{0\}} \sup_{v \in V_h\setminus\{0\}} \frac{|\mcA(u,v)|}{\|u\|_\mcH\|v\|_\mcH} > \gamma_h - \max\{C_1 H^{q_1+1}, C_2 H^{q_2+1}\} > 0
	,
\end{equation}
for all sufficiently small $H$.

Assuming sufficiently small $h$ and $H$, it now follows that $u_h$ and $\tilde{u}_h$ both exist and are unique.
By the triangle inequality, $\|u - \tilde{u}_h\|_\mathcal{H} \leq \|u - u_h\|_\mathcal{H} + \|u_h - \tilde{u}_h\|_\mathcal{H}$.
Invoking~\cref{eq:proofstepA} and then~\cref{eq:assumptions}, the consistency error term, $\|u_h - \tilde{u}_h\|_\mathcal{H}$, may be bounded from above as follows:
\begin{equation}
\label{eq:ConsistencyError}
\begin{aligned}
\tilde{\gamma}_h\|u_h - \tilde{u}_h\|_\mathcal{H}
&\leq \sup_{v \in V_h\setminus\{0\}}\! \frac{|\tilde{\mcA}(u_h - \tilde{u}_h,v)|}{\|v\|_\mathcal{H}}
=
\sup_{v \in V_h\setminus\{0\}}\! \frac{|\tilde{\mcA}(u_h,v) - \mcA(u_h,v)|}{\|v\|_\mathcal{H}}
\lesssim
H^{q+1} \| u_h \|_\mathcal{H}
.
\end{aligned}
\end{equation}
Inequality~\cref{eqn:apriori_surrogate} now follows from~\cref{eq:HnormSolution,eq:Hnormerror,eq:UniformStability}.
Likewise, if $g=0$, inequality~\cref{eqn:apriori_surrogate_g=0} follows from~\cref{eq:HnormSolutiong=0,eq:Hnormerrorg=0,eq:UniformStability}.
\end{proof}

\begin{remark}
In~\cref{eqn:apriori_surrogate}, it is important to note that the consistency error $\|u_h-\tilde{u}_h\|_\mcH$, stemming from the surrogate matrices, is independent of the wave number.
Meanwhile, in the same setting, the upper bound on the discretization error $\|u-u_h\|_\mcH$ scales like $k^p$.
This makes the surrogate methodology very attractive for large wave number problems, since the total error $\|u-\tilde{u}_h\|_\mcH$ will tend to be dominated by the discretization error $\|u-u_h\|_\mcH$.
\end{remark}

\begin{remark}
	In the special case $g=0$, considered by~\cref{eqn:apriori_surrogate_g=0}, it is well known that the discretization error improves by a factor of $k^{-1}$.
	What is perhaps surprising in the analysis above is that the consistency error of the surrogate method will also improve by the same factor, at least provided that $f\in H^1(\Omega)$.
	This conclusion follows immediately from the improved stability estimate~\cref{eq:HnormSolutiong=0}.
	Thus, in both the $g\neq0$ and $g=0$ settings, the ratio between the discretization error and consistency error remains $\mcO(k^p)$.
\end{remark}

\begin{remark}
				According to results from \cite{melenk2011wavenumber,esterhazy2014analysis}, the error bounds may carry an additional factor of $k^{\frac{5}{2}}$ if $\Omega$ is not convex.
\end{remark}

\begin{remark}\label{rem:StabilityAssumptions}
Theorem~7.2 in \cite{drzisga2019igasurrogate} shows that~\cref{eq:assumption_a} holds for the surrogate stiffness matrix $\tilde{\sfK}$ defined by~\cref{eq:DefinitionOfSurrogateMatrixSymmetricKernel}.
Likewise, assumption~\cref{eq:assumption_m} can be shown to hold for the surrogate mass matrix $\tilde{\sfM}$ defined in~\cref{eq:DefinitionOfSurrogateMatrixSymmetric}; cf. \cite[Section~8.1]{drzisga2019igasurrogate}.
\end{remark}

\subsection{Perfectly matched layer boundary conditions}
Open wave problems posed on unbounded domains are commonly solved on truncated computational domains.
In order to solve such problems accurately, spurious reflections of the outgoing waves, caused by the truncated domain, need to be absorbed.
One approach to simulate this behavior is the perfectly matched layer (PML) absorbing boundary condition introduced in \cite{berenger1994perfectly}.
With this approach, the domain of interest is extended by an artificial absorbing layer made from a special medium.
Many alternative strategies for general curvilinear domains have been proposed since then, but the underlying idea stays the same.

One possibility, which we choose to follow, is the stretching of the real domain into the complex domain.
This is achieved by replacing the physical domain map $\bvarphi(\hat{\bfx})$ by an artificial map
\begin{align}
\tilde{\bvarphi}(\hat{\bfx}) = \bvarphi(\hat{\bfx}) + \zI C\bff(\hat{\bfx}),
\end{align}
where $\bff$ is zero on the domain of interest and is smoothly increasing to unity on the layer's boundary.
The constant $C>0$ is a problem dependent penalty term controlling the strength of the absorption of the layer.
Details on the integral transformations introduced by this complex stretching may be found in \cite{Matuszyk2013}.

The surrogate matrix methodology is very suitable for simulations with PMLs because the discretization error $\|u-u_h\|_\mcH$ is usually bounded from below by a positive constant depending on the size and shape of the absorbing layer.
On the other hand, the consistency error $\|u_h-\tilde{u}_h\|_\mcH$ only measures the distance between the two approximate solutions and, therefore, still tends to zero as the mesh is refined.
Our experience has indicated that the difference between the standard IGA solution and the surrogate IGA solution is rarely distinguishable, even at low wave numbers.
Moreover, as we demonstrate in \Cref{sec:helmholtz_results}, the consistency error, although generally small, tends to be largest in the absorbing layer.
Because only the non-absorbing part of the domain is of interest, these errors in the absorbing layer are of no interest.
We consider PML boundary conditions for a linear elastodynamics problem with periodic pressure loading in \Cref{sub:elastodynamics_with_periodic_pressure_loading}.

\subsection{Discretization in time}
Explicit and implicit time discretization schemes require matrices to propagate solutions forward in time.
Implicit schemes additionally require solving one or more linear systems at each time step.
The surrogate matrix methodology can also be used in such cases for assembling these propagation matrices.
However, if the problem is linear and the iteration matrices do not change over time, and unless a matrix-free approach is considered, each matrix only needs to be assembled once.
In this case, the achievable speed-up depends on the number of time steps.
Indeed, the total relative performance improvement will diminish as the number of time steps grows.

The upshot changes for nonlinear problems where the performance of the surrogate methodology is independent of the number of time steps.
Indeed, the propagation matrices need to be reassembled throughout the simulation because they depend both on the solution at previous time steps and on the iterates of the current time step.
We showcase a time-dependent nonlinear hyperelastic wave problem in \Cref{sec:nonlinear_elasticity_results} and use it to compare performance.

\subsection{Nonlinear problems using Newton's method}
It has already been demonstrated in \cite{drzisga2018surrogate} that the surrogate matrix methodology is suitable for nonlinear problems.
However, in that work, we only considered Picard fixed point iterations.
Although our results were promising, we found that many iterations were required to arrive at the desired solver tolerance.
In this work, we chose to focus on solving nonlinear problems with Newton's method where the Jacobian matrix needs to be reassembled in each iteration.
Now, because the surrogate matrix methodology only yields approximations of matrices, the surrogate Jacobian matrix is simply just an approximation of the true Jacobian matrix.
This means that a Newton method combined with a surrogate method may be more easily interpreted as just a sophisticated quasi-Newton method for the original problem.
One particular consequence is that the consistency error $\|u_h-\tilde{u}_h\|_\mcH$ will vanish with the number of Newton iterations.
Note that in many nonlinear problems, optimizations such as exploiting the symmetry in \cref{eq:DefinitionOfSurrogateMatrixSymmetric} or the row-sum property in \cref{eq:DefinitionOfSurrogateMatrixSymmetricKernel} cannot be used.
\section{Surrogate matrices: Algorithmic considerations} \label{sec:computational_complexity}
In this section, we give a short comment on the differences of the implementation used in this paper when compared to implementations of our previous work in \cite{drzisga2019igasurrogate,drzisga2019igasurrogateimpl}.
We conclude this section with a computational complexity estimate for the asymptotic number of floating point operations (FLOPs) required for the surrogate matrix methodology.

\subsection{Implementation} \label{sub:implementation}
As in \cite{drzisga2019igasurrogate}, all of the experiments documented in this paper were implemented using the GeoPDEs package for Isogeometric Analysis in MATLAB and Octave \cite{de2011geopdes,vazquez2016new}.
Our implementation reused most of the original functionality in GeoPDEs.
A detailed explanation of the modifications and extensions is given in \cite{drzisga2019igasurrogateimpl}, albeit only for the Poisson equation.
Apart from the software implementation aspects, which are more or less unchanged from our previous work, in this paper we utilized a slightly different strategy for selecting the sample points $\tilde{\bmx}_A^\mathrm{s}$ and we used a different B-spline interpolation function.

Let $M>0$ be a fixed integer.
Roughly speaking, when constructing the multivariate B-spline functions, $\tilde{\mcM}_{\bdelta}$ and $\tilde{\mcK}_{\bdelta}$, our goal is to interpolate only about $1/M$ of the points in $\tilde{\bbX}$, in each Cartesian direction.
In \cite{drzisga2019igasurrogate}, this was done by simply taking every $M^{\mathrm{th}}$ point in $\tilde{\bbX}$, in each direction, and adding in every $M^{\mathrm{th}}$ boundary point, if it was skipped over.
In this work, in order to better distribute the sample points, we first find the total number of points $L$ in one Cartesian direction in $\tilde{\bbX}$, and then sample every $(L-1)/\operatorname{ceil}\{\frac{L-1}{M}\}$ point, after rounding to the nearest integer.
By starting at a given corner, this strategy makes sure boundary points are sampled and that all points are roughly evenly spaced; cf.~\Cref{fig:ActiveElements}.
Of course, other sampling point distributions, as for example Chebyshev nodes, may also be used with this approach.
Moreover, in this paper, we used the function \texttt{spapi}, provided by the MATLAB curve fitting toolbox, instead of the standard MATLAB functions \texttt{interp2} and \texttt{interp3} or the SciPy Python function \texttt{RectBivariateSpline}.
\texttt{spapi} allows for more general higher-order B-spline interpolations although it is slightly slower than the other functions.

\begin{figure}\centering
  \begin{subfigure}[c]{0.40\textwidth}
  \centering
    \includegraphics[width=\textwidth]{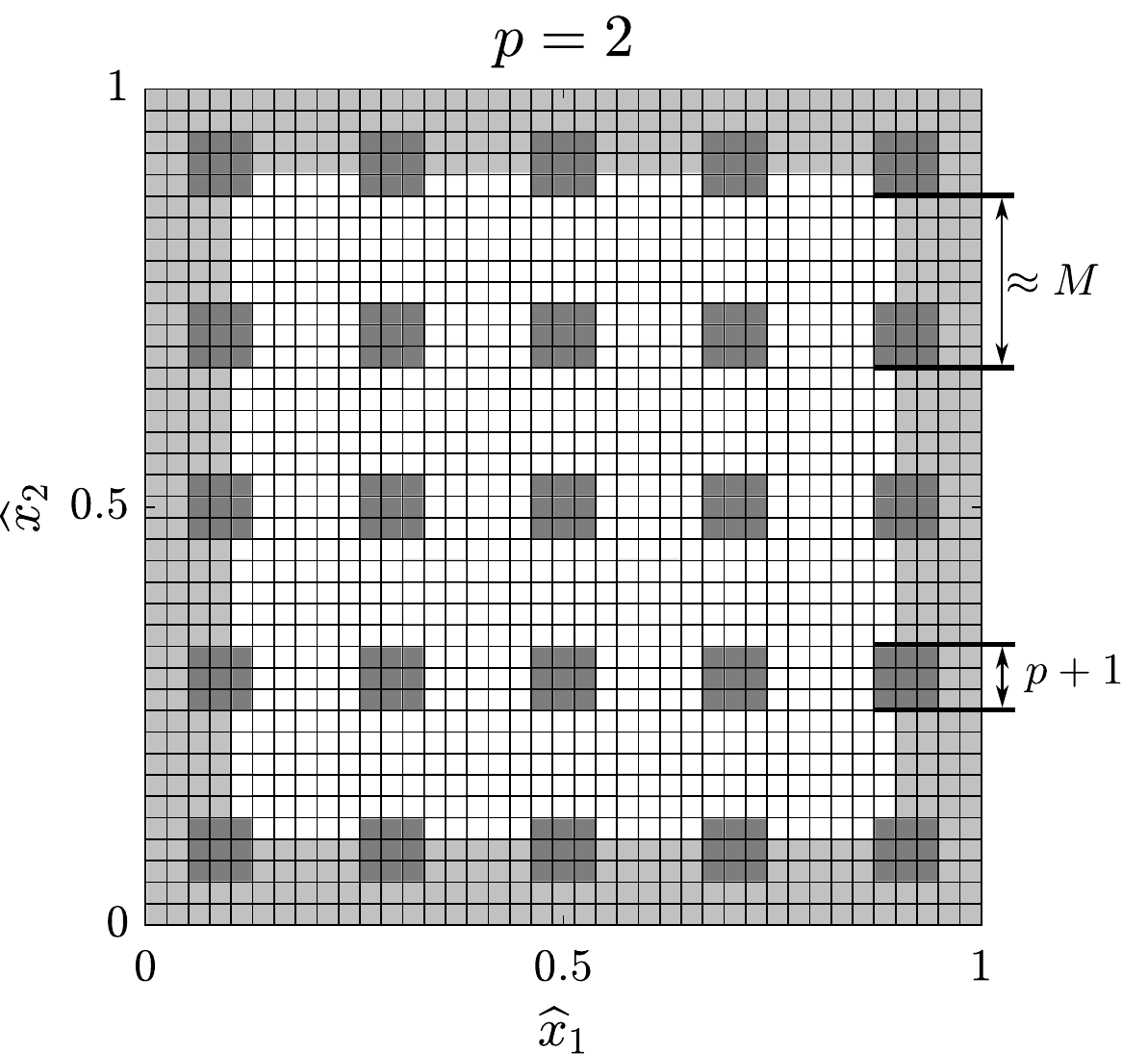}
  \end{subfigure}
  \qquad
  \begin{subfigure}[c]{0.40\textwidth}
  \centering
  \includegraphics[width=\textwidth]{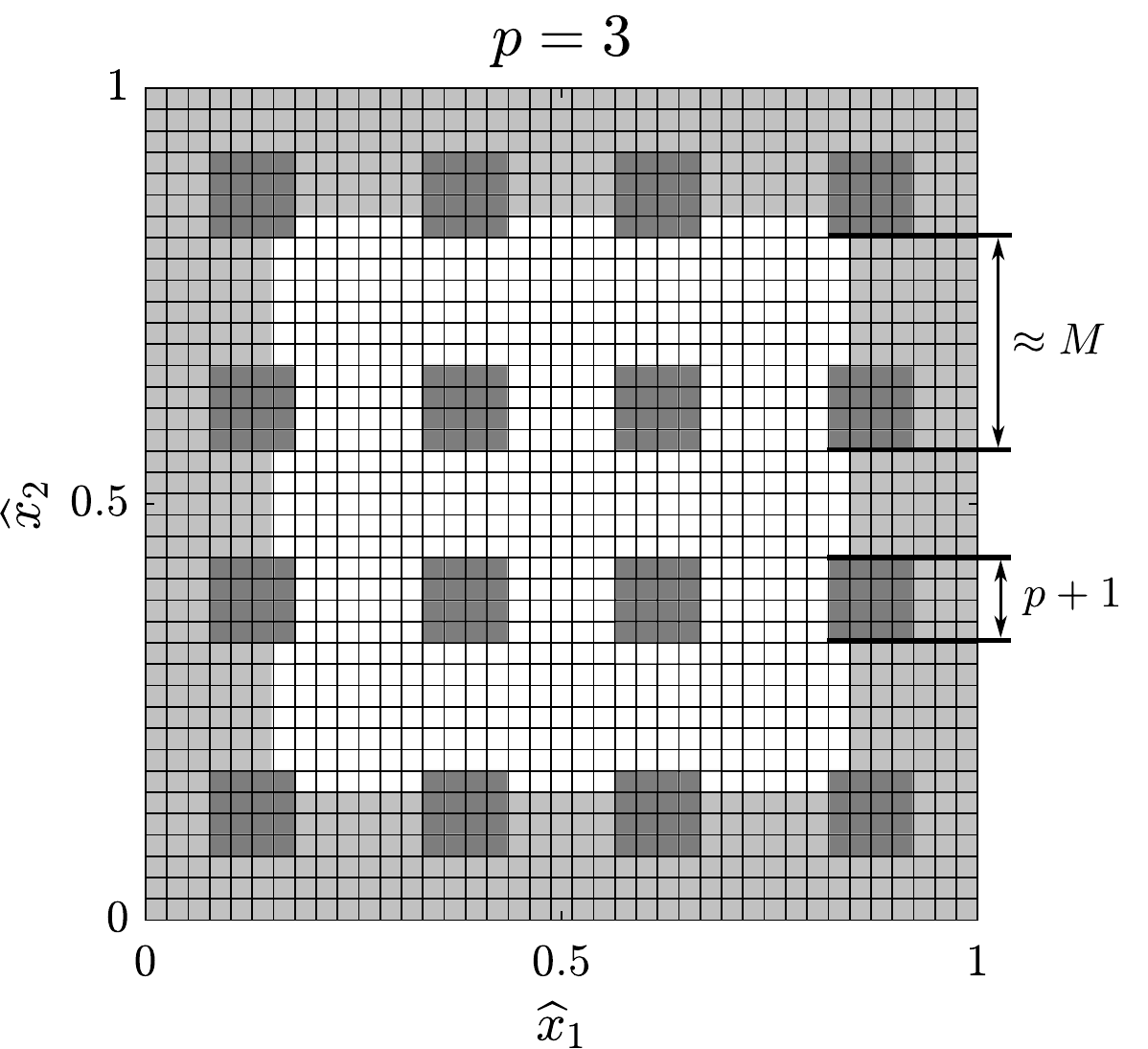}
  \end{subfigure}
  \caption{\label{fig:ActiveElements} The active elements (shown in gray) involved in the surrogate assembly for $M = 10$ with forty knots in each Cartesian direction.
  The light gray elements correspond to the active boundary elements and the dark gray elements correspond to the inner active elements required for the sampling of the stencil functions.}
\end{figure}

Note that our method for evaluating $\tilde{\mcM}_{\bdelta}(\tilde{\bmx}_i)$ and $\tilde{\mcK}_{\bdelta}(\tilde{\bmx}_i)$ is by no means optimal; cf. \cite{drzisga2019igasurrogate,drzisga2019igasurrogateimpl}.
Ideally, we would employ a row- or column-wise loop assembly procedure and only loop over the required rows or columns as it is done in \cite{bauer2017two,bauer2018new,bauer2018large,drzisga2018surrogate}; see \Cref{fig:SparsityPattern}.
Instead, we decided to construct our tests using an established software which employs element-wise loops and standard Gaussian quadrature.
Because the vast majority of IGA software employs element-wise loops, our tests can provide references for many readers to predict how surrogate matrices could accelerate their own codes. 
One drawback of our decision is that in order to evaluate $\tilde{\mcM}_{\bdelta}$ and $\tilde{\mcK}_{\bdelta}$ at any single point $\tilde{\bmx}_i$, we had to perform quadrature on every ``active element'' located in the support of the basis function centered at $\tilde{\bmx}_i$; see \Cref{fig:ActiveElements}.
It is notable that we could easily overcome this wasteful expense to provide significant speed-ups; cf. \Cref{sec:helmholtz_results}.
An explanation for this, using an estimate for the asymptotic number of required floating point operations, is given in \Cref{sub:floating_point_computational_complexity}.

\subsection{Mesh-dependent sampling lengths}
\label{sub:meshdependentsampling}
In this subsection, we recall the concept of mesh-dependent sampling lengths.
Instead of using a fixed $M$ for any mesh, we may allow $M$ to depend on the mesh size $h$.
Let $H$ be the maximum distance in any Cartesian direction, between any two points in $\tilde{\bbX}^\mathrm{s}$, cf.~\cref{eq:Hdefn}.
Recall that $q\geq 0$ is the order of the B-spline interpolation space used in constructing the surrogate stencil functions.
Generally, the error in a surrogate matrix method has the following form:
\begin{equation}
\label{eq:GeneralErrorEstimate}
              \|u-\tilde{u}_h\|
  \leq
  C_a h^{p+a}
  +
  C_b H^{q+b}
  \,,
\end{equation}
where $\|\cdot\|$ is a generic norm and each $a,b\geq 0$, $C_a,C_b > 0$ are real-valued constants, independent of $h$.
The first term on the right hand side of~\cref{eq:GeneralErrorEstimate} controls the discretization error one finds in the standard IGA method; namely $\|u-u_h\| \leq C_a h^{p+a}$.
The second term accounts for the loss of consistency in the surrogate method, $\|u_h-\tilde{u}_h\| \leq C_b H^{q+b}$.
See \cref{eqn:apriori_surrogate,eqn:apriori_surrogate_g=0} for particular examples of such estimates in the case of the Helmholtz equation.

\changed{
A necessary property is that the discretization error dominates the consistency error in the small mesh size limit $h\to 0$.
}
That is, we must design $H^{q+b} = o(h^{p+a})$ because it will cause the surrogate method to have the same asymptotic accuracy as the method it is replacing.
If $H$ is related to $h$ via a constant factor $M>0$, i.e., $H = M \cdot h$, this property is guaranteed, so long as $q + b > p + a$.
However, it is by no means necessary for $H$ and $h$ to be proportional to each other or even for $H = \mcO(h)$.

The best performance is achieved when $H = o(h)$; that is, when $H = M \cdot h$ and $M = H/h \to \infty$ as $h\to 0$.
Provided that $q + b > p + a$, a natural choice which enforces this property is the definition
\begin{equation}
  M(h) = \max\{1, \lfloor C \cdot h^{\epsilon-1 + \frac{p+a}{q+b}} \rfloor\}
  ,
\label{eq:MeshDependentSamplingLength}
\end{equation}
where $\epsilon, C > 0$ are a tunable parameters.
Notice that this definition implies that $M(h)$ will grow more rapidly, in the $h\to 0$ limit, as the interpolation order $q$ is increased.
For further information about mesh-dependent sampling lengths, see \cite[Section~7.3.3]{drzisga2019igasurrogate}.

\subsection{Floating point computational complexity} \label{sub:floating_point_computational_complexity}

The time-to-solution in a simulation depends on the culmination of many factors, not solely the number of FLOPs.
Indeed, good performance usually relies on a good problem-, scale-, and architecture-dependent balance between FLOPs and memory traffic.
In this subsection we present a simple back-of-the-envelope complexity argument, based only on FLOPs, for the surrogate mass matrix $\tilde{\sfM}$ defined in~\cref{eq:DefinitionOfSurrogateMatrixSymmetric}.
A complexity argument for the surrogate stiffness matrix $\tilde{\sfK}$ would be almost identical.
The order estimates presented here should only be understood as crude predictions of the overall performance to be expected in practice.

Begin with an open uniform knot vector $\Xi = \{\xi_1,\ldots,\xi_{m+p+1}\}$ and let it define the multivariate B-spline basis $\{\hat{B}_{i}(\hat{x})\}$, $i = 1,\ldots,N$, described in \Cref{sub:cardinal_b_splines_and_nurbs}.
We assume that this B-spline basis forms the approximation space $V_h$ used in the discretization of both $\sfM$ and $\tilde{\sfM}$.
As mentioned in the introduction, the best complexity of formation and assembly with Galerkin IGA may be as little as $\mcO(r^n p)$ with $r = p$ \cite{hiemstra2019fast}.
Employing standard element-wise Gaussian quadrature, the complexity increases to $\mcO(r^n p)$ for $r = p^2$.
Of course, such an estimate has an implicit dependence on the mesh size $h = \frac{1}{m-p}$.
Accounting for both $h$- and $p$-dependence, the IGA assembly has at least a complexity of $\mcO(N r^{n}p)$, where $N = \mcO(h^{-n})$ is the number of degrees of freedom.

\begin{figure}\centering
    \includegraphics[width=0.5\textwidth]{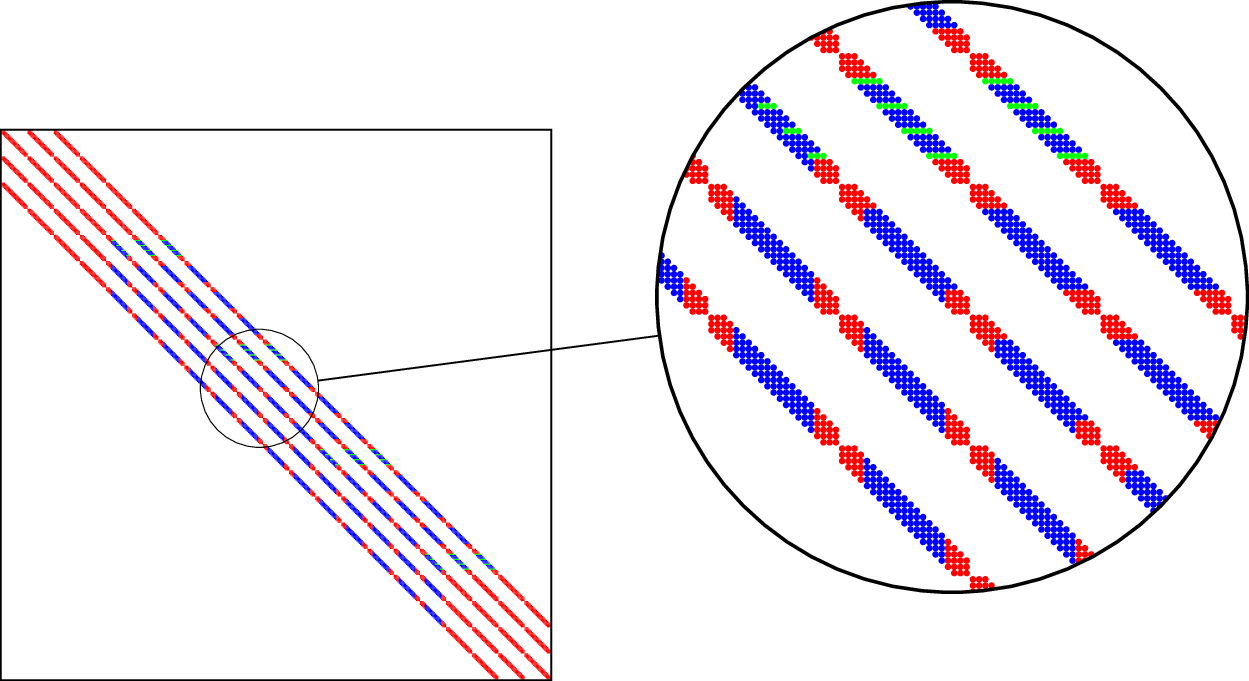}
  \caption{\label{fig:SparsityPattern} Sparsity pattern of the surrogate mass matrix $\tilde{\sfM}$ where $H= 5 h$.
The red and green points indicate the entries of the matrix which are evaluated with Gaussian quadrature.
The blue points indicate the entries which are obtained by evaluating the surrogate stencil functions $\mcM_{\bdelta}$.
The red points correspond to the basis functions near the boundaries and the green entries are used as supporting points for the interpolation.}
\end{figure}

We now argue that if $H = o(h)$, assembling the surrogate mass matrix~\cref{eq:DefinitionOfSurrogateMatrixSymmetric}, with a B-spline interpolation of order $q$, costs $\mcO(h^{-n} p^{n}q)$ FLOPs, with a small leading constant, regardless of the quadrature rule used.
As usual in such analysis, we assume that the univariate B-spline or NURBS basis functions and their gradients are pre-evaluated at the quadrature points and stored in memory.
This assumption, is not a great drawback because the knot vector $\Xi$ is uniform and so the memory footprint of the univariate basis functions evaluated at the quadrature points is small.

In order to estimate the complexity of forming $\tilde{\sfM}$, we must separately account for the cost of computing each of the different nonzero entries in the matrix.
However, we immediately disregard the cost of enforcing symmetry and assume that it only changes the constants found in the final FLOP estimate.

There are three different types of non-zero entries in $\tilde{\sfM}$; see, e.g.,~\cref{fig:SparsityPattern}.
First, there are the entries computed by evaluating the surrogate stencil functions $\tilde{\mcM}_{\bdelta}$ at points in $\tilde{\bbX}$; cf. the blue points in~\cref{fig:SparsityPattern}.
There are $\mcO(p^{n})$ surrogate stencil functions which need to be evaluated at $\#\tilde{\bbX} = (m-2p)^n$ rows. Employing sum-factorization, the final estimate of the surrogate stencil function evaluation is $\mcO(m^n p^{n} q) = \mcO(h^{-n} p^n q)$.
Next, there are each of the non-zero coefficients coming from interaction with basis functions which do not have the cardinal structure; i.e., $\hat{B}_i(\hat{\bmx}) \neq \hat{B}(\bmx-\tilde{\bmx}_i)$, cf. the red points in~\cref{fig:SparsityPattern}.
There are $\mcO(N - \#\tilde{\bbX}) = \mcO(m^n - (m-2p)^n) = \mcO(m^{n-1}) = \mcO(h^{-n+1})$ basis functions without this structure.
In turn, there are $\mcO(h^{-n+1})$ rows/column in $\tilde{\sfK}$ which are filled in using standard IGA assembly procedures, thus providing an optimal complexity of $\mcO(h^{-n+1} r^{n}p)$.
Asymptotically, as $h\to 0$, this contribution is negligible compared to the cost of evaluating the surrogate stencil functions.
However, for large $h$, this term may significantly contribute to the total performance.

Lastly, there are the computations which must be performed in order to sample the stencil functions.
Recall the identification $[\sfM]_{ij} = \mcM_{\bdelta}(\tilde{\bmx}_i)$.
These coefficients are precisely those appearing at the green points in \cref{fig:SparsityPattern}.
Since each point in $\tilde{\bbX}^\mathrm{s}$ is at most a distance $H$ apart, in each Cartesian direction, this leads to at most $\mcO(H^{-n})$ rows, each with a cost of $\mcO(r^{n}p)$.
Written in terms of $H$, the cost of sampling has a total complexity of $\mcO(H^{-n} r^{n}p)$.
\changed{Exploiting the tensor-product structure of the approximation space, the B-spline interpolation itself requires $n$ LU decompositions of sparse univariate collocation matrices which are banded with bandwidth $\mcO(q)$.
Computing the LU decomposition of one such banded matrix without pivoting requires $\mcO(H^{-1} q^2)$ operations \cite{golub2013matrix}.
Applying the forward and backward substitutions to the $\mcO(H^{-n+1})$ right-hand sides requires $\mcO(H^{-n} q)$ operations.
Since the interpolation needs to be done for all $\mcO(p^n)$ stencil functions, the total cost of the interpolation step is $\mcO(H^{-1} p^n q^2 + H^{-n} p^{n} q)$ FLOPs.}

Having accounted for the three different types of non-zero entries, it is now evident that the cost of forming $\tilde{\sfM}$ can be separated into four contributions: evaluating the stencil functions ($\mcO(h^{-n} p^n q)$); numerical integration with the non-cardinal basis functions ($\mcO(h^{-n+1} r^n p)$); sampling the stencil functions ($\mcO(H^{-n} r^n p)$); and performing the interpolation of the stencil functions (\changed{$\mcO(H^{-1} p^n q^2 + H^{-n} p^{n} q)$}).
Since there are always $\mcO(h^{-n})$ rows in the final matrix, the average complexity per row is as follows:
\begin{equation}
\begin{aligned}	
		  \mathrm{avg.~cost}~=~~
  &\frac{\mcO(h^{-n} p^n q) + \mcO(h^{-n+1} r^n p) + \mcO(H^{-n} r^n p) + \changed{\mcO(H^{-1} p^n q^2 + H^{-n} p^{n} q)}}{h^{-n}}
															\,.
\end{aligned}
\label{eq:RelativeComplexity}
\end{equation}

In the small mesh size limit, employing a fixed {sampling parameter} $M>0$ throughout the full sequence of meshes, we see that the complexity in $p$ is still at least $\mcO(r^{n}p)$.
However, in this setting, the constant factor in the $\mcO(r^{n}p)$ term is proportional to $1/M>0$, which may be very small.
In general,
\begin{equation}
  \mathrm{avg.~cost}~=~~\mcO(p^n q) + \mcO(r^n p),
  \qquad
  \text{if } H = \mcO(h).
\end{equation}
In the case of a mesh-dependent sampling length, $\lim_{h\to0} \frac{h}{H} = 0$, so the complexity estimate is improved.
Indeed,
\begin{equation}
  \mathrm{avg.~cost}~=~~\mcO(p^n q),
  \qquad
  \text{if } H = o(h).
\end{equation}

Some remarks about these estimates are now in order.
From the deductions above, it is clear that for any interpolation order $q > p$, forming $\tilde{\sfM}$ will have a poorer floating point complexity than $\mcO(p^{n+1})$, which is achievable with some other methods \cite{hiemstra2019fast}.
Nevertheless, experience has lead the authors to conclude that it tends to actually be very desirable to select a large $q > p$ when forming any surrogate matrix.
Although it is quite clear that a large $q$ positively influences the convergence rate of the consistency error term in~\cref{eq:GeneralErrorEstimate}, we have seen very little change in performance with any $q$ we have studied.
One reason for this may be that the constant factor in the $\mcO(p^n q)$ estimate is extremely small; in particular, much smaller than the constant in front of the $\mcO(r^n p)$ terms attributed to performing quadrature.
We posit that this may be the case because the term derives only from function evaluation which tend to be very cache-aware operations.
Note that the experiments and measurements in \Cref{sec:helmholtz_results} use an implementation with element-loop assembly and Gaussian quadrature; i.e., $r = p^2$.
\section{Numerical examples} \label{sec:helmholtz_results}
In order to show the applicability and efficiency of the presented methods, we performed a set of numerical experiments which are documented here.
In \Cref{subsec:helmholtz_results}, we consider the Helmholtz equation with various boundary conditions\changed{, and in \Cref{sub:varyingwavenumber} the same problem with a non-constant wave number}.
In \Cref{sub:elastodynamics_with_periodic_pressure_loading}, we consider a time harmonic problem involving linear elasticity.
Finally, in \Cref{sec:nonlinear_elasticity_results}, we consider a nonlinear, transient, hyperelastic wave propagation example.

All run-time measurements were obtained on a machine equipped with two Intel\textsuperscript{\textregistered} Xeon\textsuperscript{\textregistered} Gold 6136 processors with a nominal base frequency of 3.0 GHz.
Each processor has 12 physical cores which results in a total of 24 physical cores.
The total available memory of \SI{251}{\giga\byte} is split into two NUMA domains; one for each socket.

\subsection{Helmholtz equation}
\label{subsec:helmholtz_results}
In this subsection, we investigate the surrogate matrix methodology in case of the Helmholtz example \cref{eq:Helmholtz} and verify the theoretical results stated in \Cref{thm:helmholtz_surrogate}.
We investigate the problem on three representative domains.
Namely, the convex domain $\Omega_1$ depicted in \cref{fig:DomainTeaCup} and two non-convex domains; the quarter annulus with bumps $\Omega_2$ depicted in \cref{fig:DomainNonconvex} and the part of a spherical shell $\Omega_3$ shown in \cref{fig:Domain3D}.

In the first set of experiments, we fix the trial space $V_h$ and the surrogate matrix parameters and vary the wave number.
This will indicate dependence of the various errors on the wave number $k$.
For $\Omega_1$ and $\Omega_2$ we fix $m = 640$ and $M = 5$, and for $\Omega_3$, $m = 100$ and $M = 17$.
In each setting, we set $p = 2$ and $q = 5$.
Let $\mcH_a^{(1)}$ be a Hankel function of the first kind and $r = \|\bfx\|$.
As analytical solutions, we choose
\begin{align}
u(r) = \frac{\zI}{4} \mcH_0^{(1)}(kr) \text{ in 2D\qquad and \qquad} u(r) = \frac{\zI}{4r} \mathrm{e}^{\zI k r} \text{ in 3D.}
\label{eq:ExactSolutionsHelmholtz}
\end{align}
These choices yield $f = 0$ so long as the origin is not included in the domain.
The Robin-type boundary term $g$ is computed by using the analytical solution $u$.

The relative errors in the $\mathcal{H}$-norm are presented in \cref{fig:HelmholtzWavenumber} for the first two domains and on the left-hand side of \cref{fig:HelmholtzWavenumber3D} for $\Omega_3$.
Additionally, we plot the real part of the surrogate solution for $k = 30$ in the center of \cref{fig:HelmholtzWavenumber3D} and show the assembly time comparison between the standard and surrogate method on the right-hand side of \cref{fig:HelmholtzWavenumber3D}.
Already, with a fixed $M = 17$, a speed-up of about $251$\% can be observed.
The discretization error in all cases grows like $k^p$, as predicted in \Cref{thm:helmholtz_surrogate}.
Moreover, the relative consistency error in the $\mathcal{H}$-norm is almost independent of $k$.
This agrees well enough with our predictions since the assumptions made in \Cref{sec:helmholtz_intro} may not hold for the very highest wave numbers we considered.
\begin{figure}
\centering
\begin{subfigure}[b]{0.32\linewidth}
\includegraphics[width=\linewidth]{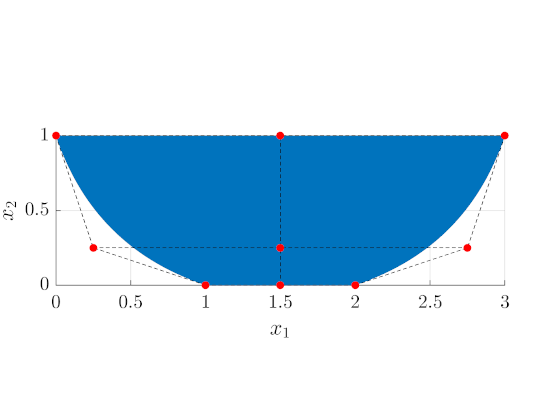}
\caption{\label{fig:DomainTeaCup}Convex domain $\Omega_1$}
\end{subfigure}
\begin{subfigure}[b]{0.32\linewidth}
\includegraphics[width=\linewidth]{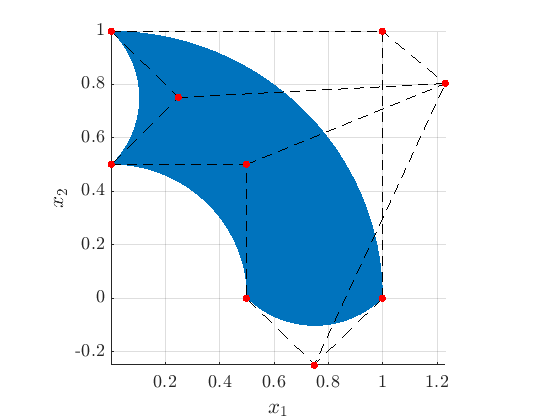}
\caption{\label{fig:DomainNonconvex}Non-convex domain $\Omega_2$}
\end{subfigure}
\begin{subfigure}[b]{0.32\linewidth}
\includegraphics[width=\linewidth]{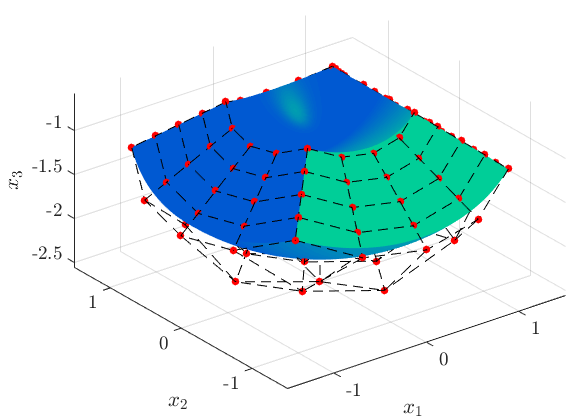}
\caption{\label{fig:Domain3D}Non-convex domain $\Omega_3$}
\end{subfigure}
\caption{\label{fig:DomainsHelmholtz} Domains considered for the Helmholtz problem.}
\end{figure}
\begin{figure}\centering
\hspace*{2em}
\begin{minipage}{0.32\textwidth}
\begin{scaletikzpicturetowidth}{\textwidth}
\begin{tikzpicture}[scale=\tikzscale,font=\large]
\begin{loglogaxis}[
xlabel={Wave number $k$},
ylabel={Relative error},
xmajorgrids,
ymajorgrids,
title={Convex domain $\Omega_1$},
legend style={fill=white, fill opacity=0.6, draw opacity=1, text opacity=1},
legend pos=north west,
]
\addplot[black, mark=*, very thick, mark options={scale=1.5}, fill opacity=0.6] table [x index = {0}, y index={1}, col sep=comma] {./Results/Helmholtz_paper_convex/Helmholtz_errors.csv};
\addlegendentry{$\|u - u_h\|_\mathcal{H} / \|u\|_\mathcal{H}$};
\addplot[color2, mark=diamond*, very thick] table [x index = {0}, y index={1}, col sep=comma] {./Results/Helmholtz_paper_convex/Helmholtz_errors.csv};
\addlegendentry{$\|u - \tilde{u}_h\|_\mathcal{H} / \|u\|_\mathcal{H}$};
\addplot[color1, mark=diamond*, very thick] table [x index = {0}, y index={3}, col sep=comma] {./Results/Helmholtz_paper_convex/Helmholtz_errors.csv};
\addlegendentry{$\|u_h - \tilde{u}_h\|_\mathcal{H} / \|u\|_\mathcal{H}$};
\UpwardLogLogSlopeTriangle{0.75}{0.2}{0.63}{1}{2}{};
\legend{}; \end{loglogaxis}
\end{tikzpicture}
\end{scaletikzpicturetowidth}
\end{minipage}
\qquad
\begin{minipage}{0.47\textwidth}
\begin{scaletikzpicturetowidth}{\textwidth}
\begin{tikzpicture}[scale=\tikzscale,font=\large]
\begin{loglogaxis}[
xlabel={Wave number $k$},
ylabel={Relative error},
xmajorgrids,
ymajorgrids,
title={Non-convex domain $\Omega_2$},
legend style={fill=white, fill opacity=0.6, draw opacity=1, text opacity=1},
legend pos=outer north east,
mark repeat=1,
]
\addplot[black, mark=*, very thick, mark options={scale=1.5}, fill opacity=0.6] table [x index = {0}, y index={1}, col sep=comma] {./Results/Helmholtz_paper_nonconvex/Helmholtz_errors.csv};
\addlegendentry{$\|u - u_h\|_\mathcal{H} / \|u\|_\mathcal{H}$};
\addplot[color2, mark=diamond*, very thick] table [x index = {0}, y index={1}, col sep=comma] {./Results/Helmholtz_paper_nonconvex/Helmholtz_errors.csv};
\addlegendentry{$\|u - \tilde{u}_h\|_\mathcal{H} / \|u\|_\mathcal{H}$};
\addplot[color1, mark=diamond*, very thick] table [x index = {0}, y index={3}, col sep=comma] {./Results/Helmholtz_paper_nonconvex/Helmholtz_errors.csv};
\addlegendentry{$\|u_h - \tilde{u}_h\|_\mathcal{H} / \|u\|_\mathcal{H}$};
\UpwardLogLogSlopeTriangle{0.7}{0.2}{0.6}{1}{2}{};
\end{loglogaxis}
\end{tikzpicture}
\end{scaletikzpicturetowidth}
\end{minipage}
\caption{\label{fig:HelmholtzWavenumber} Demonstration of $k$-dependency on the various errors for the Helmholtz problem on $\Omega_1$ and $\Omega_2$, respectively.}
\end{figure}
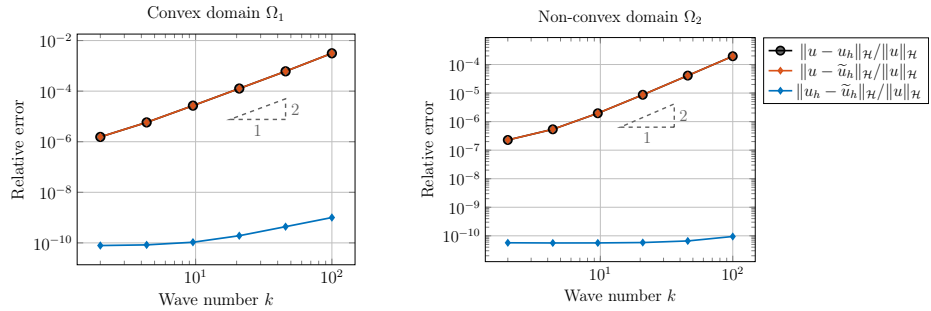
\begin{figure}\centering
\begin{minipage}{0.31\textwidth}
\begin{scaletikzpicturetowidth}{\textwidth}
\begin{tikzpicture}[scale=\tikzscale,font=\large]
\begin{loglogaxis}[
xlabel={Wave number $k$},
ylabel={Relative error},
xmajorgrids,
ymajorgrids,
title={Non-convex domain $\Omega_3$},
legend style={fill=white, fill opacity=0.6, draw opacity=1, text opacity=1},
legend pos=north west,
mark repeat=1,
ymax=5e-1
]
\addplot[black, mark=*, very thick, mark options={scale=1.5}, fill opacity=0.6] table [x index = {0}, y index={1}, col sep=comma] {./Results/Helmholtz_paper_3d/Helmholtz_errors.csv};
\addlegendentry{$\|u - u_h\|_\mathcal{H} / \|u\|_\mathcal{H}$};
\addplot[color2, mark=diamond*, very thick] table [x index = {0}, y index={1}, col sep=comma] {./Results/Helmholtz_paper_3d/Helmholtz_errors.csv};
\addlegendentry{$\|u_h - \tilde{u}_h\|_\mathcal{H} / \|u\|_\mathcal{H}$};
\addplot[color1, mark=diamond*, very thick] table [x index = {0}, y index={3}, col sep=comma] {./Results/Helmholtz_paper_3d/Helmholtz_errors.csv};
\addlegendentry{$\|u_h - \tilde{u}_h\|_\mathcal{H} / \|u\|_\mathcal{H}$};
\UpwardLogLogSlopeTriangle{0.7}{0.2}{0.45}{1}{2}{};
\end{loglogaxis}
\end{tikzpicture}
\end{scaletikzpicturetowidth}
\end{minipage}
\hfill
\begin{minipage}{0.30\textwidth}
\begin{center}
\includegraphics[width=0.9\textwidth]{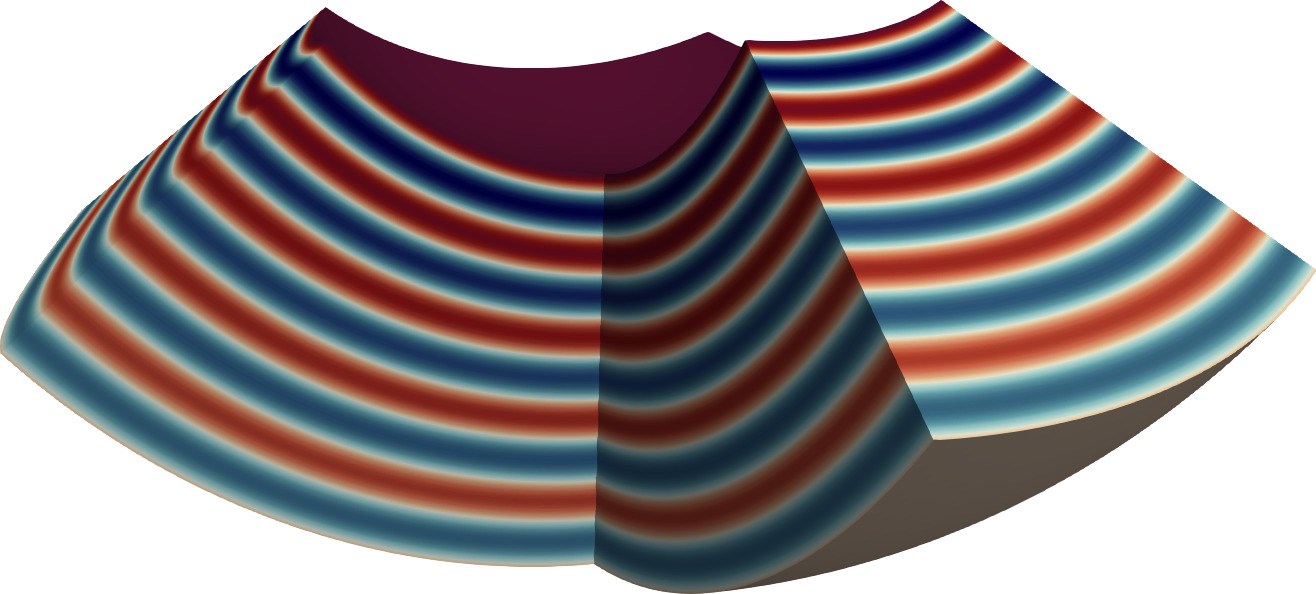}\\[0.5em]
\begin{scaletikzpicturetowidth}{0.7\textwidth}
\begin{tikzpicture}[scale=\tikzscale,font=\large]
\pgfmathsetlengthmacro\MajorTickLength{
  \pgfkeysvalueof{/pgfplots/major tick length} * 0.5
}
\begin{axis}[
title={\Large $\mathrm{Re}\{\tilde{u}_h\}$},
xmin=-2.4e-1, xmax=2.30e-1,
ymin=0, ymax=0.02,
axis on top,
scaled x ticks=false,
scaled y ticks=false,
ytick=\empty,
yticklabels=\empty,
yticklabel pos=right,
x tick label style={
  /pgf/number format/.cd,
            sci zerofill,
            precision=0,
  /tikz/.cd  
},
extra x tick style={
    font=\large,
    tick style=transparent,     yticklabel pos=left,
    x tick label style={
        /pgf/number format/.cd,
            std,
            precision=3,
      /tikz/.cd
    }
},
width=7cm,
height=1.82cm,
major tick length=\MajorTickLength,
max space between ticks=1000pt,
try min ticks=4,
]
\addplot graphics [
includegraphics cmd=\pgfimage,
xmin=\pgfkeysvalueof{/pgfplots/xmin}, 
xmax=\pgfkeysvalueof{/pgfplots/xmax}, 
ymin=\pgfkeysvalueof{/pgfplots/ymin}, 
ymax=\pgfkeysvalueof{/pgfplots/ymax}
] {./Figures/cool_to_warm_extended_rot.png};
\end{axis}
\end{tikzpicture}
\end{scaletikzpicturetowidth}
\end{center}
\end{minipage}
\hfill
\begin{minipage}{0.31\textwidth}
\begin{tikzpicture}
\begin{axis}[font=\footnotesize,
ymin=0,
height=4.25cm,
x=3.6cm,
enlarge x limits={abs=0.45cm},
bar width=0.3cm,
ybar,
xmajorticks=false,
nodes near coords,
xmin=0.0,xmax=0.3,
ylabel={Assembly time [s]},
ymax=6800,
legend style={at={(0.5,-0.15)}, anchor=north,legend columns=-1},]
\addplot+[black] coordinates {(0.1, 5636)};
\addplot+[color2] coordinates {(0.2, 1607)};
\legend{ref,surr}
\end{axis}
\end{tikzpicture}
\end{minipage}
\caption{\label{fig:HelmholtzWavenumber3D} Demonstration of $k$-dependency on the various errors for the Helmholtz problem on $\Omega_3$ (left). Plot of the real part of the surrogate solution for $k = 30$ (center). Assembly time comparison between reference (ref) and surrogate (surr) method (right).}
\end{figure}

For our second set of experiments, we consider the non-convex domain $\Omega_2$ and the same 2D analytical solution defined in~\cref{eq:ExactSolutionsHelmholtz}.
Here, we vary $h$ but fix $q = 5$, use the mesh-dependent sampling parameter $M = \max\Big\{1, \lfloor 0.5 \cdot m^{1-\frac{p+1}{q+1}}\rfloor\Big\}$, and consider each $k \in \{8,16,32,64,128\}$.
For this scenario, we plot relative total errors, relative consistency errors, and speed-ups versus $\frac{m}{k} \propto \frac{1}{kh}$.

The relative errors in the $\mathcal{H}$-norm for the selected wave numbers $k \in \{8,128\}$ can be observed in the plot on the left-hand side of \Cref{fig:HelmholtzError}.
Here, both $\frac{\|u-u_h\|_\mcH}{\|u\|_\mcH}$ and $\frac{\|u-\tilde{u}_h\|_\mcH}{\|u\|_\mcH}$ are presented.
For a common wave number $k$, the two relative error curves lie almost perfectly on top of each other and clearly demonstrate the estimated optimal order of convergence, $\mathcal{O}((\frac{m}{k})^{-2})$.
In the center plot of \Cref{fig:HelmholtzError}, we present the relative consistency errors for each $k \in \{8,16,32,64,128\}$.
From this plot, it is both obvious that the consistency errors are much smaller than the corresponding discretization errors and that they do not have any notable dependence on the ratio $m/k$.
On the right-hand side of \Cref{fig:HelmholtzError}, the speed-ups of the assembly time for those wave numbers are presented.
The largest speed-up of $3178$\% may be observed for $k = 128$ on the finest mesh.
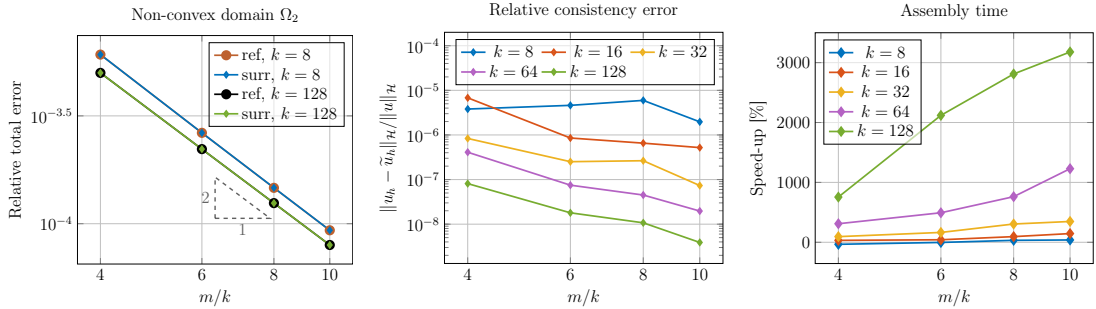
\begin{figure}\centering
\begin{scaletikzpicturetowidth}{0.32\textwidth}
\pgfplotstableread[col sep=comma,header=false]{./Results/Helmholtz_paper_study_nonconvex/Helmholtz_errors_8.csv}\helmholtzstudyxlabels
\begin{tikzpicture}[scale=\tikzscale,font=\large]
\begin{loglogaxis}[
xlabel={$m/k$},
ylabel={Relative total error},
xmajorgrids,
ymajorgrids,
title={Non-convex domain $\Omega_2$},
legend style={fill=white, fill opacity=0.6, draw opacity=1, text opacity=1},
legend pos=north east,
xtick=data,
xticklabels from table={\helmholtzstudyxlabels}{[index] 0},
legend cell align={left},
]
\addplot[color8, mark=*, very thick, mark options={scale=1.5}, fill opacity=1.0] table [x index = {0}, y index={1}, col sep=comma] {./Results/Helmholtz_paper_study_nonconvex/Helmholtz_errors_8.csv};
\addlegendentry{ref, $k = 8$};
\addplot[color1, mark=diamond*, very thick] table [x index = {0}, y index={2}, col sep=comma] {./Results/Helmholtz_paper_study_nonconvex/Helmholtz_errors_8.csv};
\addlegendentry{surr, $k = 8$};

\addplot[black, mark=*, very thick, mark options={scale=1.5}, fill opacity=1.0] table [x index = {0}, y index={1}, col sep=comma] {./Results/Helmholtz_paper_study_nonconvex/Helmholtz_errors_128.csv};
\addlegendentry{ref, $k = 128$};
\addplot[color5, mark=diamond*, very thick] table [x index = {0}, y index={2}, col sep=comma] {./Results/Helmholtz_paper_study_nonconvex/Helmholtz_errors_128.csv};
\addlegendentry{surr, $k = 128$};
\logLogSlopeTriangle{0.7}{0.2}{0.2}{1}{2}{};
\end{loglogaxis}
\end{tikzpicture}
\end{scaletikzpicturetowidth}
\hfill
\begin{scaletikzpicturetowidth}{0.32\textwidth}
\pgfplotstableread[col sep=comma,header=false]{./Results/Helmholtz_paper_study_nonconvex/Helmholtz_errors_8.csv}\helmholtzstudyxlabels
\begin{tikzpicture}[scale=\tikzscale,font=\large]
\begin{loglogaxis}[
xlabel={$m/k$},
ylabel={$\|u_h - \tilde{u}_h\|_\mathcal{H} / \|u\|_\mathcal{H}$},
xmajorgrids,
ymajorgrids,
title={Relative consistency error},
legend style={fill=white, fill opacity=0.6, draw opacity=1, text opacity=1},
legend pos=north east,
legend columns=3,
xtick=data,
xticklabels from table={\helmholtzstudyxlabels}{[index] 0},
ymax=2e-4
]
\addplot[color1, mark=diamond*, very thick] table [x index = {0}, y index={5}, col sep=comma] {./Results/Helmholtz_paper_study_nonconvex/Helmholtz_errors_8.csv};
\addlegendentry{$k = 8$};

\addplot[color2, mark=diamond*, very thick] table [x index = {0}, y index={5}, col sep=comma] {./Results/Helmholtz_paper_study_nonconvex/Helmholtz_errors_16.csv};
\addlegendentry{$k = 16$};

\addplot[color3, mark=diamond*, very thick] table [x index = {0}, y index={5}, col sep=comma] {./Results/Helmholtz_paper_study_nonconvex/Helmholtz_errors_32.csv};
\addlegendentry{$k = 32$};

\addplot[color4, mark=diamond*, very thick] table [x index = {0}, y index={5}, col sep=comma] {./Results/Helmholtz_paper_study_nonconvex/Helmholtz_errors_64.csv};
\addlegendentry{$k = 64$};

\addplot[color5, mark=diamond*, very thick] table [x index = {0}, y index={5}, col sep=comma] {./Results/Helmholtz_paper_study_nonconvex/Helmholtz_errors_128.csv};
\addlegendentry{$k = 128$};

\end{loglogaxis}
\end{tikzpicture}
\end{scaletikzpicturetowidth}
\hfill
\begin{scaletikzpicturetowidth}{0.32\textwidth}
\pgfplotstableread[col sep=comma,header=false]{./Results/Helmholtz_paper_study_nonconvex/Helmholtz_errors_8.csv}\helmholtzstudyxlabels
\begin{tikzpicture}[scale=\tikzscale,font=\large]
\begin{semilogxaxis}[
xlabel={$m/k$},
ylabel={Speed-up [\%]},
xmajorgrids,
ymajorgrids,
title={Assembly time},
legend style={fill=white, fill opacity=0.6, draw opacity=1, text opacity=1},
legend pos=north west,
mark repeat=1,
/pgf/number format/.cd,
use comma,
1000 sep={},
xtick=data,
xticklabels from table={\helmholtzstudyxlabels}{[index] 0}
]
\addplot[color1, mark=diamond*, very thick, mark options={scale=1.5}, fill opacity=1.0] table [x index = {1}, y index={7}, col sep=comma] {./Results/Helmholtz_paper_study_nonconvex/Helmholtz_speedup_8.csv};
\addlegendentry{$k = 8$};
\addplot[color2, mark=diamond*, very thick, mark options={scale=1.5}, fill opacity=1.0] table [x index = {1}, y index={7}, col sep=comma] {./Results/Helmholtz_paper_study_nonconvex/Helmholtz_speedup_16.csv};
\addlegendentry{$k = 16$};
\addplot[color3, mark=diamond*, very thick, mark options={scale=1.5}, fill opacity=1.0] table [x index = {1}, y index={7}, col sep=comma] {./Results/Helmholtz_paper_study_nonconvex/Helmholtz_speedup_32.csv};
\addlegendentry{$k = 32$};
\addplot[color4, mark=diamond*, very thick, mark options={scale=1.5}, fill opacity=1.0] table [x index = {1}, y index={7}, col sep=comma] {./Results/Helmholtz_paper_study_nonconvex/Helmholtz_speedup_64.csv};
\addlegendentry{$k = 64$};
\addplot[color5, mark=diamond*, very thick, mark options={scale=1.5}, fill opacity=1.0] table [x index = {1}, y index={7}, col sep=comma] {./Results/Helmholtz_paper_study_nonconvex/Helmholtz_speedup_128.csv};
\addlegendentry{$k = 128$};
\end{semilogxaxis}
\end{tikzpicture}
\end{scaletikzpicturetowidth}
\caption{\label{fig:HelmholtzError} Relative reference and surrogate errors in the $\mathcal{H}$-norm for different wave numbers $k$ computed on the non-convex domain $\Omega_2$ (left). Relative consistency errors (center) and speed-up of the assembly time of the same problem (right).
\changed{Recall here that $M = \max\Big\{1, \lfloor 0.5 \cdot m^{1-\frac{p+1}{q+1}}\rfloor\Big\}$.}
}
\end{figure}
\changed{\subsection{Helmholtz equation with non-constant wave number}
\label{sub:varyingwavenumber}
In this subsection, we consider the Helmholtz equation \cref{eq:Helmholtz} in which the wave number is non-constant over the physical domain.
This type of problem occurs, for instance, in the modeling of acoustic waves with heterogeneous wavespeed; see, e.g.,~\cite{chan2017weight} and the references therein.
Here, we use an example inspired from \cite{erlangga2006novel} on the wedge domain $(0,6) \times (0,10) \subset \R^2$ presented in the left of \cref{fig:varying_wave_number}.
The domain is discretized with three patches.
In the top and bottom patches, we utilize a constant wave number and in the central  patch, we use a spatially varying wave number.
This choice introduces a jump in the coefficient along the patch interfaces.
Since the matrix entries corresponding to the basis functions close to the patch boundaries are integrated by the standard approach, the surrogate method is not impeded by this discontinuity.
In particular, we choose the spatially varying wave number
\begin{align}
\label{eq:variablewavenumber}
k(x_1,x_2) = \begin{cases} 20 & \text{for}\: 0 \leq x_2 < \frac{x_1}{3} + 2, \\5 \sin{\left(2 \pi x_2 \right)} + 15 & \text{for}\: \frac{x_1}{3} + 2 \leq x_2 < 6 - \frac{x_1}{6}, \\30 & \text{for}\: 6 - \frac{x_1}{6} \leq x_2 \leq 10, \end{cases}
\end{align}
and we consider two settings.

In the first one, we consider a manufactured solution $u(x_1,x_2) = \sin{\left(20 \pi x_1 \right)} \sin{\left(20 \pi x_2 \right)}$
which we use to obtain the right-hand side $f$ and $g$ in \cref{eq:Helmholtz}.
In the center and right of \cref{fig:varying_wave_number}, we present the relative $H^1(\Omega)$ and $L^2(\Omega)$ errors for $p = 3$, $M = 5$, $q \in \{1,2,3,4,5\}$, and decreasing $h$.
We observe that the surrogate method is able to reproduce the solution of the standard approach.
Moreover, the surrogate solutions exhibit the same error convergence rates as the reference solution ($M=1$) for all choices of $q$.
\begin{figure}\begin{minipage}{\textwidth}
\centering
\begin{minipage}{0.2\textwidth}
\includegraphics[height=10.5em]{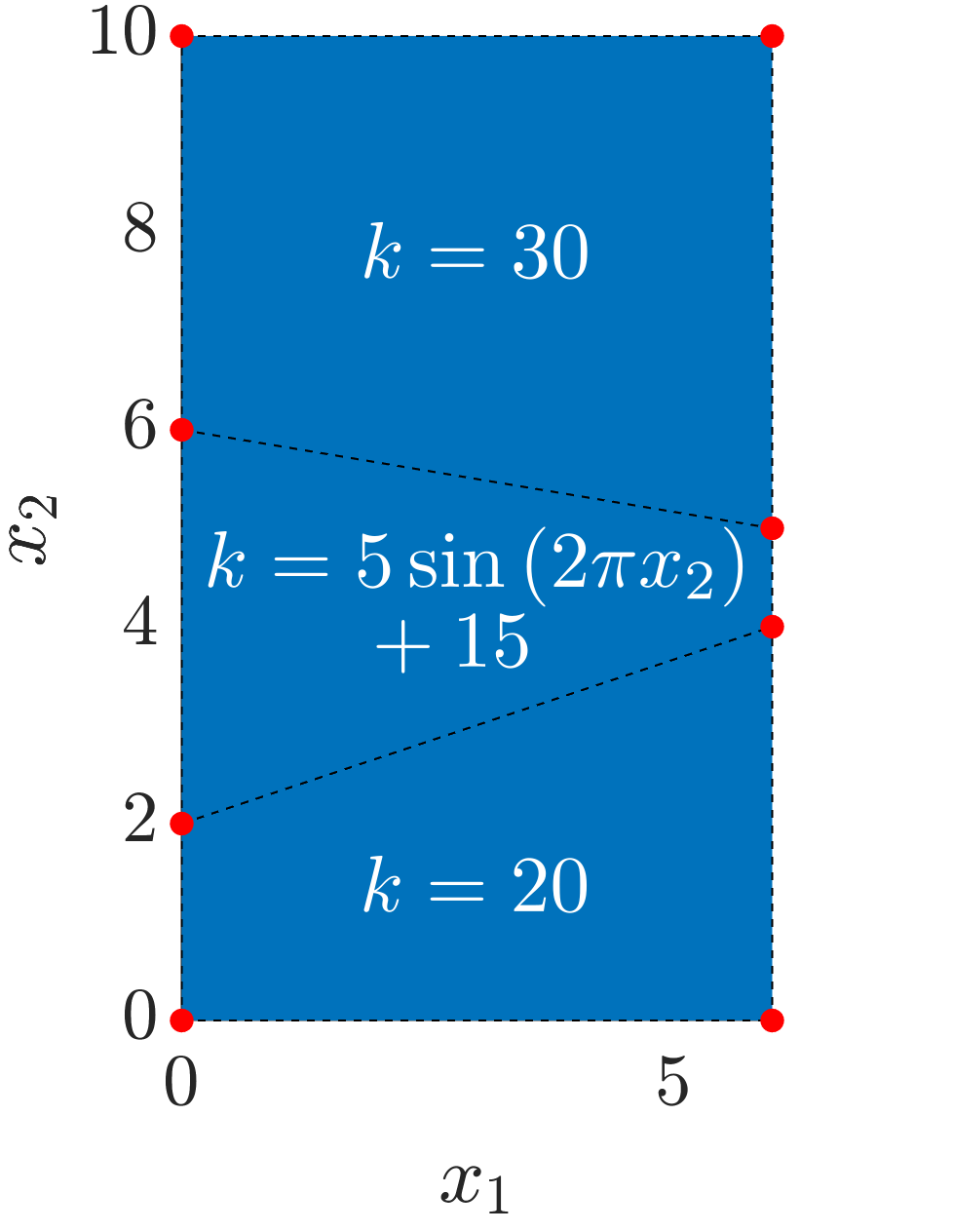}
\end{minipage}
\begin{minipage}{0.32\textwidth}
\begin{scaletikzpicturetowidth}{\textwidth}
\begin{tikzpicture}[scale=\tikzscale,font=\large]
\begin{loglogaxis}[
xlabel={Degrees of freedom},
ylabel={Relative $H^1$ error},
xmajorgrids,
ymajorgrids,
legend style={fill=white, fill opacity=0.6, draw opacity=1, text opacity=1},
legend pos=north east
]
\addplot[black, mark=*, very thick, mark options={scale=1.5}, fill opacity=0.6] table [x index = {0}, y index={1}, col sep=comma] {./Results/HelmholtzNonconstantWavenumber/h1errors_p3.csv};
\addlegendentry{$M=1$};
\addplot[color2, mark=diamond*, very thick] table [x index = {0}, y index={2}, col sep=comma] {./Results/HelmholtzNonconstantWavenumber/h1errors_p3.csv};
\addlegendentry{$q=1$};
\addplot[color3, mark=diamond*, very thick] table [x index = {0}, y index={3}, col sep=comma] {./Results/HelmholtzNonconstantWavenumber/h1errors_p3.csv};
\addlegendentry{$q=2$};
\addplot[color4, mark=diamond*, very thick] table [x index = {0}, y index={4}, col sep=comma] {./Results/HelmholtzNonconstantWavenumber/h1errors_p3.csv};
\addlegendentry{$q=3$};
\addplot[color5, mark=diamond*, very thick] table [x index = {0}, y index={5}, col sep=comma] {./Results/HelmholtzNonconstantWavenumber/h1errors_p3.csv};
\addlegendentry{$q=4$};
\addplot[color6, mark=diamond*, very thick] table [x index = {0}, y index={6}, col sep=comma] {./Results/HelmholtzNonconstantWavenumber/h1errors_p3.csv};
\addlegendentry{$q=5$};
\logLogSlopeTriangle{0.80}{0.2}{0.15}{2}{3}{};
\legend{}; \end{loglogaxis}
\end{tikzpicture}
\end{scaletikzpicturetowidth}
\end{minipage}
\qquad
\begin{minipage}{0.32\textwidth}
\begin{scaletikzpicturetowidth}{\textwidth}
\begin{tikzpicture}[scale=\tikzscale,font=\large]
\begin{loglogaxis}[
xlabel={Degrees of freedom},
ylabel={Relative $L^2$ error},
xmajorgrids,
ymajorgrids,
legend style={fill=white, fill opacity=0.6, draw opacity=1, text opacity=1},
legend pos=north east
]
\addplot[black, mark=*, very thick, mark options={scale=1.5}, fill opacity=0.6] table [x index = {0}, y index={1}, col sep=comma] {./Results/HelmholtzNonconstantWavenumber/l2errors_p3.csv};
\addlegendentry{$M=1$};
\addplot[color2, mark=diamond*, very thick] table [x index = {0}, y index={2}, col sep=comma] {./Results/HelmholtzNonconstantWavenumber/l2errors_p3.csv};
\addlegendentry{$q=1$};
\addplot[color3, mark=diamond*, very thick] table [x index = {0}, y index={3}, col sep=comma] {./Results/HelmholtzNonconstantWavenumber/l2errors_p3.csv};
\addlegendentry{$q=2$};
\addplot[color4, mark=diamond*, very thick] table [x index = {0}, y index={4}, col sep=comma] {./Results/HelmholtzNonconstantWavenumber/l2errors_p3.csv};
\addlegendentry{$q=3$};
\addplot[color5, mark=diamond*, very thick] table [x index = {0}, y index={5}, col sep=comma] {./Results/HelmholtzNonconstantWavenumber/l2errors_p3.csv};
\addlegendentry{$q=4$};
\addplot[color6, mark=diamond*, very thick] table [x index = {0}, y index={6}, col sep=comma] {./Results/HelmholtzNonconstantWavenumber/l2errors_p3.csv};
\addlegendentry{$q=5$};
\logLogSlopeTriangle{0.80}{0.2}{0.15}{2}{4}{};
\end{loglogaxis}
\end{tikzpicture}
\end{scaletikzpicturetowidth}
\end{minipage}
\end{minipage}
\caption{\label{fig:varying_wave_number} \changed{Wedge domain (left). Relative $H^1(\Omega)$ and $L^2(\Omega)$ errors for $p = 3$, $M = 5$, and the manufactured solution $u(x_1,x_2) = \sin{\left(20 \pi x_1 \right)} \sin{\left(20 \pi x_2 \right)}$ for the Helmholtz problem with non-constant wave number $k(x_1,x_2)$ from \cref{eq:variablewavenumber} (center and right).}}
\end{figure}

In the second setting, we provide only the source and boundary terms which do not stem from a manufactured solution.
This allows us to compare the discrete solutions only.
We choose $f(\bfx) = \frac{1}{\pi\,a} \exp({-\|\bfx - \bfc\|^2a^{-2}})$ with $a = 5\cdot 10^{-3}$, $\bfc = (3, 9.5)^\T$, $g = 0$, and $k(x_1,x_2)$ from \cref{eq:variablewavenumber}.
For the discretization parameters, we choose $p=3$, $q \in \{1,2,3\}$, and $M = 5$.
In the left of \cref{fig:varying_wave_number_nonanalytic}, we present the real part of the solution obtained with the standard approach ($M = 1$).
The solutions obtained with the surrogate method do not differ visually.
We emphasize this fact by showing the relative differences of the real part of the standard solution and the real part of each of the surrogate solutions in the right of \cref{fig:varying_wave_number_nonanalytic}.
We observe that the differences decrease with increasing $q$ and that the largest differences are located within the patch in which the wave number varies as well as in the left part of the bottom patch.
\begin{figure}\begin{minipage}{\textwidth}
\centering
\begin{minipage}{0.23\textwidth}
\begin{center}
\includegraphics[height=11em]{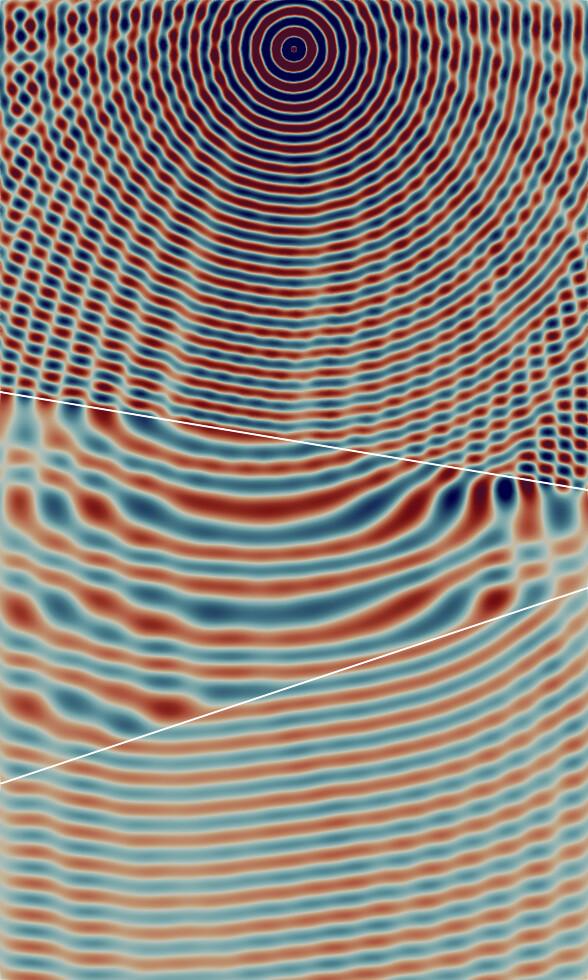}
\begin{scaletikzpicturetowidth}{0.6\textwidth}
\begin{tikzpicture}[scale=\tikzscale,font=\large]
\pgfmathsetlengthmacro\MajorTickLength{
  \pgfkeysvalueof{/pgfplots/major tick length} * 0.5
}
\begin{axis}[
title={\Large \vspace*{2em}$\mathrm{Re}(u_h)$},
xmin=-2e-4, xmax=2e-4,
ymin=0, ymax=0.02,
axis on top,
scaled x ticks=false,
scaled y ticks=false,
ytick=\empty,
yticklabels=\empty,
yticklabel pos=right,
extra x tick style={
    font=\large,
    tick style=transparent,     yticklabel pos=left,
    y tick label style={
        /pgf/number format/.cd,
            std,
            precision=3,
      /tikz/.cd
    }
},
width=7cm,
height=1.82cm,
major tick length=\MajorTickLength,
max space between ticks=1000pt,
try min ticks=3,
]
\addplot graphics [
includegraphics cmd=\pgfimage,
xmin=\pgfkeysvalueof{/pgfplots/xmin}, 
xmax=\pgfkeysvalueof{/pgfplots/xmax}, 
ymin=\pgfkeysvalueof{/pgfplots/ymin}, 
ymax=\pgfkeysvalueof{/pgfplots/ymax}
] {./Figures/cool_to_warm_extended_rot.png};
\end{axis}
\end{tikzpicture}
\end{scaletikzpicturetowidth}
\end{center}
\end{minipage}
\hfill
\begin{minipage}{0.23\textwidth}
\begin{center}
\includegraphics[height=11em]{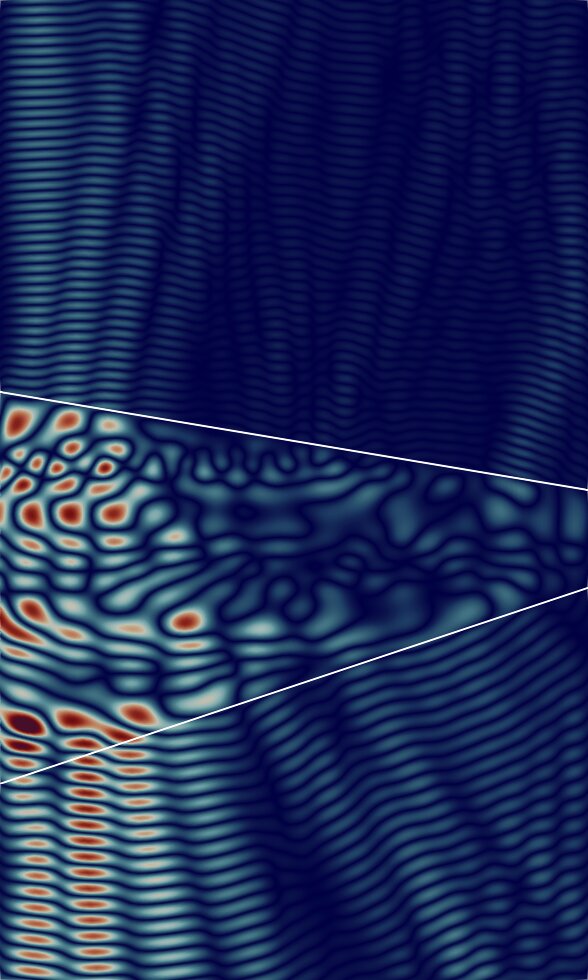}
\begin{scaletikzpicturetowidth}{0.6\textwidth}
\begin{tikzpicture}[scale=\tikzscale,font=\large]
\pgfmathsetlengthmacro\MajorTickLength{
  \pgfkeysvalueof{/pgfplots/major tick length} * 0.5
}
\begin{axis}[
title={\Large $|\mathrm{Re}(u_h - \widetilde{u}_h^{(q=1)})|   / \|u_h\|_{L^2(\Omega)}$},
xmin=1.0e-7, xmax=1.0e-3,
ymin=0, ymax=0.02,
axis on top,
scaled x ticks=false,
scaled y ticks=false,
ytick=\empty,
yticklabels=\empty,
yticklabel pos=right,
xticklabels={},
 extra x ticks={
   \pgfkeysvalueof{/pgfplots/xmin},
   \pgfkeysvalueof{/pgfplots/xmax}
 },
extra x tick style={
    font=\large,
    tick style=transparent,     yticklabel pos=left,
    y tick label style={
        /pgf/number format/.cd,
            std,
            precision=1,
      /tikz/.cd
    }
},
width=7cm,
height=1.82cm,
major tick length=\MajorTickLength,
max space between ticks=1000pt,
try min ticks=3,
]
\addplot graphics [
includegraphics cmd=\pgfimage,
xmin=\pgfkeysvalueof{/pgfplots/xmin}, 
xmax=\pgfkeysvalueof{/pgfplots/xmax}, 
ymin=\pgfkeysvalueof{/pgfplots/ymin}, 
ymax=\pgfkeysvalueof{/pgfplots/ymax}
] {./Figures/cool_to_warm_extended_rot.png};
\end{axis}
\end{tikzpicture}
\end{scaletikzpicturetowidth}
\end{center}
\end{minipage}
\hfill
\begin{minipage}{0.23\textwidth}
\begin{center}
\includegraphics[height=11em]{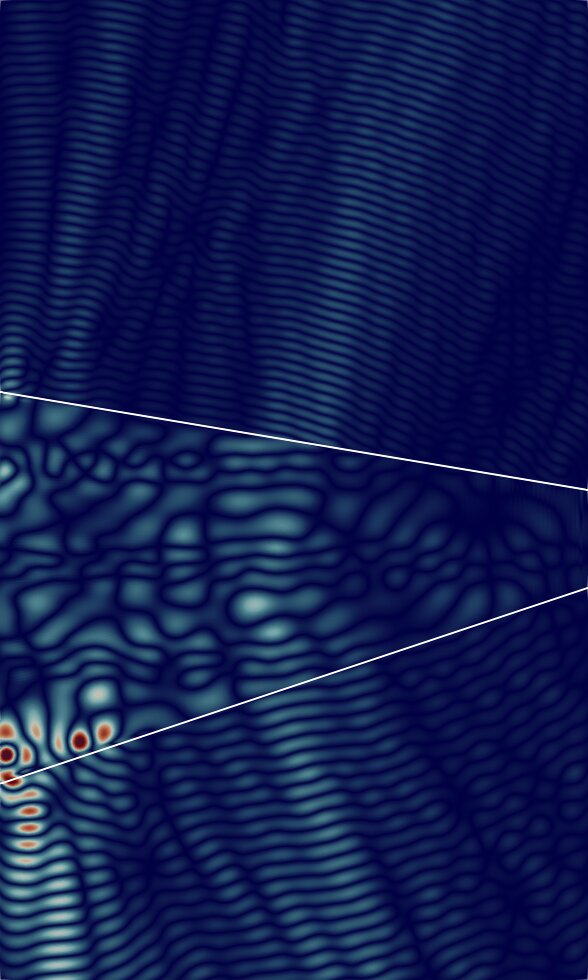}
\begin{scaletikzpicturetowidth}{0.6\textwidth}
\begin{tikzpicture}[scale=\tikzscale,font=\large]
\pgfmathsetlengthmacro\MajorTickLength{
  \pgfkeysvalueof{/pgfplots/major tick length} * 0.5
}
\begin{axis}[
title={\Large $|\mathrm{Re}(u_h - \widetilde{u}_h^{(q=2)})|   / \|u_h\|_{L^2(\Omega)}$},
xmin=1.0e-10, xmax=3e-6,
ymin=0, ymax=0.02,
axis on top,
scaled x ticks=false,
scaled y ticks=false,
ytick=\empty,
yticklabels=\empty,
yticklabel pos=right,
xticklabels={},
 extra x ticks={
    \pgfkeysvalueof{/pgfplots/xmin},
    \pgfkeysvalueof{/pgfplots/xmax}
 },
extra x tick style={
    font=\large,
    tick style=transparent,     yticklabel pos=left,
    y tick label style={
        /pgf/number format/.cd,
            std,
            precision=1,
      /tikz/.cd
    }
},
width=7cm,
height=1.82cm,
major tick length=\MajorTickLength,
max space between ticks=1000pt,
try min ticks=3,
]
\addplot graphics [
includegraphics cmd=\pgfimage,
xmin=\pgfkeysvalueof{/pgfplots/xmin}, 
xmax=\pgfkeysvalueof{/pgfplots/xmax}, 
ymin=\pgfkeysvalueof{/pgfplots/ymin}, 
ymax=\pgfkeysvalueof{/pgfplots/ymax}
] {./Figures/cool_to_warm_extended_rot.png};
\end{axis}
\end{tikzpicture}
\end{scaletikzpicturetowidth}
\end{center}
\end{minipage}
\hfill
\begin{minipage}{0.23\textwidth}
\begin{center}
\includegraphics[height=11em]{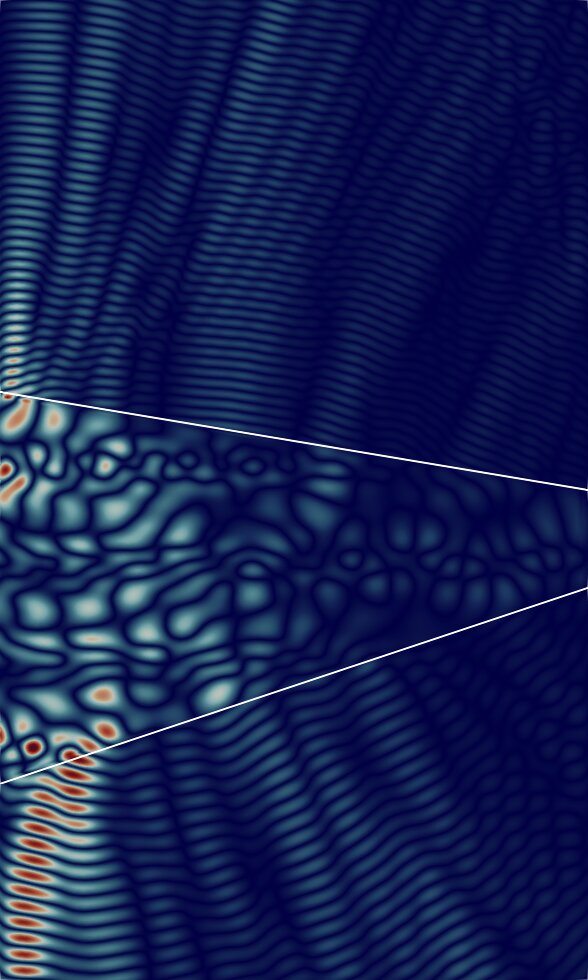}
\begin{scaletikzpicturetowidth}{0.6\textwidth}
\begin{tikzpicture}[scale=\tikzscale,font=\large]
\pgfmathsetlengthmacro\MajorTickLength{
  \pgfkeysvalueof{/pgfplots/major tick length} * 0.5
}
\begin{axis}[
title={\Large $|\mathrm{Re}(u_h - \widetilde{u}_h^{(q=3)})|   / \|u_h\|_{L^2(\Omega)}$},
xmin=4.0e-11, xmax=6e-7,
ymin=0, ymax=0.02,
axis on top,
scaled x ticks=false,
scaled y ticks=false,
ytick=\empty,
yticklabels=\empty,
yticklabel pos=right,
xticklabels={},
 extra x ticks={
    \pgfkeysvalueof{/pgfplots/xmin},
    \pgfkeysvalueof{/pgfplots/xmax}
 },
extra x tick style={
    font=\large,
    tick style=transparent,     yticklabel pos=left,
    y tick label style={
        /pgf/number format/.cd,
            std,
            precision=1,
      /tikz/.cd
    }
},
width=7cm,
height=1.82cm,
major tick length=\MajorTickLength,
max space between ticks=1000pt,
try min ticks=3,
]
\addplot graphics [
includegraphics cmd=\pgfimage,
xmin=\pgfkeysvalueof{/pgfplots/xmin}, 
xmax=\pgfkeysvalueof{/pgfplots/xmax}, 
ymin=\pgfkeysvalueof{/pgfplots/ymin}, 
ymax=\pgfkeysvalueof{/pgfplots/ymax}
] {./Figures/cool_to_warm_extended_rot.png};
\end{axis}
\end{tikzpicture}
\end{scaletikzpicturetowidth}
\end{center}
\end{minipage}
\end{minipage}
\caption{\label{fig:varying_wave_number_nonanalytic} \changed{Real part of the solution obtained with the standard approach and $p = 3$ for the Helmholtz problem with non-constant wave number $k(x_1,x_2)$ from \cref{eq:variablewavenumber} and non-manufactured solution (left). Relative difference of the real part of the standard solution and the real part of each of the surrogate solutions with increasing $q \in \{1,2,3\}$ (second from left to right).}}
\end{figure}
}
\subsection{Linear elastodynamics with periodic pressure loading} \label{sub:elastodynamics_with_periodic_pressure_loading}

In this set of experiments, we focus on the time-harmonic linear elastodynamics problem \cref{eq:TimeHarmonicSystem} with the energy density functional $W$ in \cref{eq:LinearElastEnergy}.
We start with \cref{eq:GeneralSystem} and apply a pressure that fluctuates periodically, with angular frequency $\omega$, in the interior of the circular hole.
The setup is depicted in \cref{fig:PlateWithHoleFrequencyBCs}.
Let $\bsigma = \partial_{\bmu} W(\bmu)$.
On the circular boundary, we apply a time-dependent pressure of the form $-\bsigma\bmn\cdot\bmn = p(t) = \frac{p_0}{\sqrt{2\,\pi}} \mathrm{e}^{-\iu\omega t}$.
The problem is transformed into the frequency domain resulting in the time harmonic equations \cref{eq:TimeHarmonicSystem}.
In this particular experiment, we choose $\omega = 50\,\pi$, $p_0 = 1$, $r_0 = 1$, $L = 4$, $\lambda = 2$, $\mu = 1$, and $\rho_0 = 1$.
The analytical solution to this problem may be written in polar coordinates $(r,\theta)$ as follows \cite{kausel2006fundamental}:
\begin{equation}
u_r(r) = - \frac{p_0 r_{0} \zeta H^{(2)}_{1}\left(\frac{\omega r}{\gamma}\right)}{\mu \left(\gamma \omega r_{0} H^{(2)}_{0}\left(\omega r_{0}\right) - 2 \zeta H^{(2)}_{1}\left(\omega r_{0}\right)\right)}
\,.
\end{equation}
Here, $\mcH_a^{(2)}$ is the Hankel function of the second kind, $\gamma = \sqrt{\frac{\lambda + 2 \mu}{\rho_{0}}}$, and $\zeta = \sqrt{\frac{\mu}{\rho_{0}}}$.

We investigate PML absorbing boundary conditions.
For this problem, the stiffness matrix $\sfK$ and mass matrix $\sfM$ in \cref{eq:TimeHarmonicSystem} need to be assembled.
For the surrogate matrices, this is achieved by employing definitions similar to \cref{eq:DefinitionOfSurrogateMatrixSymmetric} and \cref{eq:DefinitionOfSurrogateMatrixSymmetricKernel} but for the vector-valued setting mentioned in \cref{remark:vectorvalued}.
\begin{figure}
\centering
\begin{minipage}{0.27\textwidth}
\includegraphics[width=\textwidth]{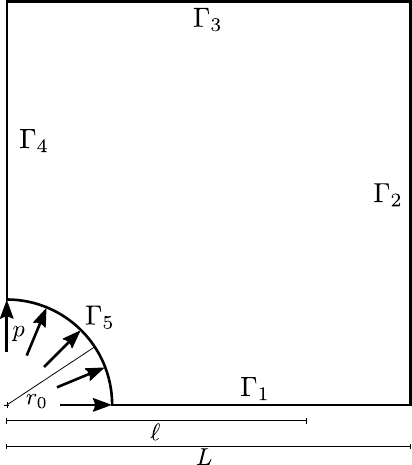}
\end{minipage}
\caption{\label{fig:PlateWithHoleFrequencyBCs} Plate with circular hole setup in linear elastodynamics problem.}
\end{figure}
We prescribe symmetric boundary conditions on $\Gamma_1$ and $\Gamma_4$ and the pressure is applied to $\Gamma_5$.
On the remaining boundaries, $\Gamma_2$ and $\Gamma_3$, homogeneous Dirichlet boundary conditions are prescribed.
The region of interest and the PML region are separated through $\ell$.
Each physical coordinate $x_k$ for $k=1,2$ is mapped according to the following specific stretching function from \cite{michler2007improving,astaneh2018perfectly}:
\begin{align}
\label{eq:StretchingFunction}
\tilde{x}_k = \begin{cases} 
x_k & \text{if } 0 < x_k \leq \ell, \\
x_k + \iu \frac{C}{\omega}\left(\frac{x_k - \ell}{L - \ell}\right)^n & \text{if } \ell < x_k \leq L,
\end{cases}
\end{align}
where $\ell = 3$, $n = 2$, and $C = 5$.

As with the second set of experiments with the Helmholtz equation, we also choose to adopt a mesh-dependent sampling parameter $M = M(h)$ which balances of the error and performance in our favor.
Here, we consider $M(h) = \max\Big\{2,\floor[\big]{2 \cdot h^{\frac{p-q+1/2}{q+1}}}\Big\}$, where it is implicitly understood that $q>p$.

For $p=2$ and $q=5$, the real part of the surrogate solution and difference from the standard solution are presented in \cref{fig:PlateWithHolePML}.
Note that the largest difference is observed in the PML region.
This can be attributed to the fact that the stencil functions are also affected by the stretching function \cref{eq:StretchingFunction}.
Inspecting the right-hand side of \cref{fig:PlateWithHolePML}, it is clear that the difference in the two solutions in the domain of interest, $\Omega \cap (0,\ell)^2$, is an order of magnitude less than the difference in the two solutions in the PML.

Relative $L^2$ errors in $\Omega \cap (0,\ell)^2$ and the associated assembly times are shown in \cref{fig:PlateWithHolePMLErrorTime}.
For all $h$ and $q$ considered, $\|\bmu-\bmu_h\|_{L^2(\Omega)}$ is indistinguishable from $\|\bmu-\tilde{\bmu}_h\|_{L^2(\Omega)}$.
We do not observe asymptotic error convergence, even in the standard IGA case, because of the presence of the PML.
For $q=5$ on the finest mesh, we observe a speed-up of $1679\%$, without any degradation in the $L^2$ error.
\begin{figure}
\centering
\qquad
\begin{minipage}{0.27\textwidth}
\begin{center}
\includegraphics[width=\textwidth]{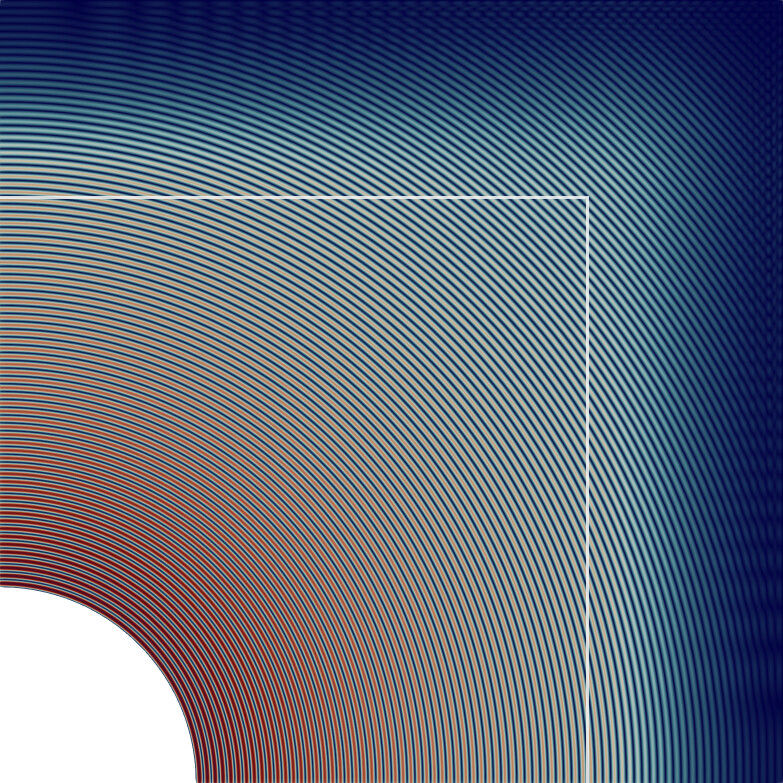}
\begin{scaletikzpicturetowidth}{0.6\textwidth}
\begin{tikzpicture}[scale=\tikzscale,font=\large]
\pgfmathsetlengthmacro\MajorTickLength{
  \pgfkeysvalueof{/pgfplots/major tick length} * 0.5
}
\begin{axis}[
title={\Large $|\mathrm{Re}\{\tilde{\bmu}_h\}|$},
xmin=0, xmax=0.0032,
ymin=0, ymax=0.02,
axis on top,
scaled x ticks=false,
scaled y ticks=false,
ytick=\empty,
yticklabels=\empty,
yticklabel pos=right,
x tick label style={
  /pgf/number format/.cd,
            sci zerofill,
            precision=0,
  /tikz/.cd  
},
extra x tick style={
    font=\large,
    tick style=transparent,     yticklabel pos=left,
    x tick label style={
        /pgf/number format/.cd,
            std,
            precision=3,
      /tikz/.cd
    }
},
width=7cm,
height=1.82cm,
major tick length=\MajorTickLength,
max space between ticks=1000pt,
try min ticks=4,
]
\addplot graphics [
includegraphics cmd=\pgfimage,
xmin=\pgfkeysvalueof{/pgfplots/xmin}, 
xmax=\pgfkeysvalueof{/pgfplots/xmax}, 
ymin=\pgfkeysvalueof{/pgfplots/ymin}, 
ymax=\pgfkeysvalueof{/pgfplots/ymax}
] {./Figures/cool_to_warm_extended_rot.png};
\end{axis}
\end{tikzpicture}
\end{scaletikzpicturetowidth}
\end{center}
\end{minipage}
\qquad
\begin{minipage}{0.27\textwidth}
\begin{center}
\includegraphics[width=\textwidth]{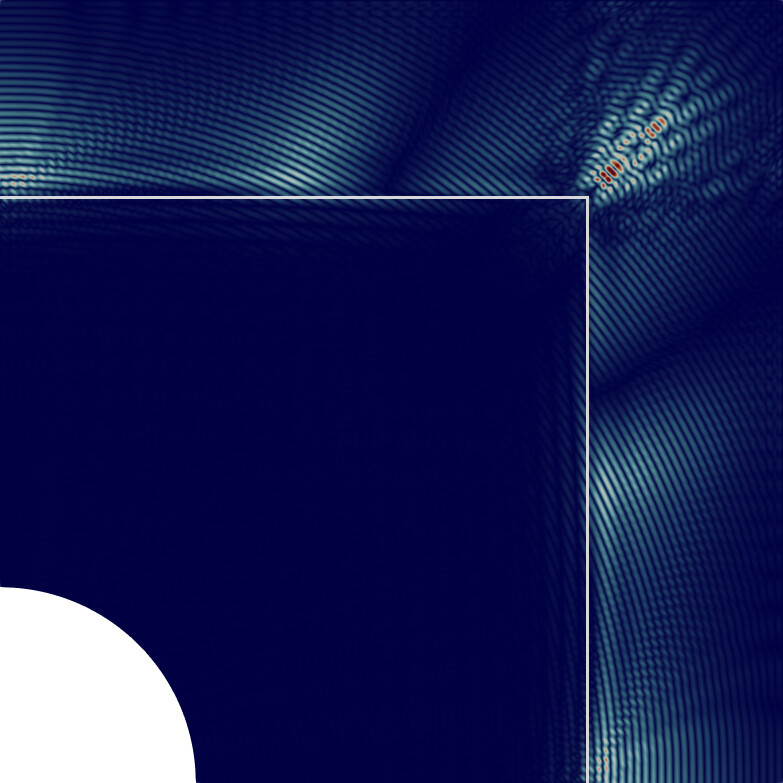}
\begin{scaletikzpicturetowidth}{0.6\textwidth}
\begin{tikzpicture}[scale=\tikzscale,font=\large]
\pgfmathsetlengthmacro\MajorTickLength{
  \pgfkeysvalueof{/pgfplots/major tick length} * 0.5
}
\begin{axis}[
title={\Large $|\mathrm{Re}\{\bmu_h - \tilde{\bmu}_h\}|$},
xmin=0, xmax=1.2e-5,
ymin=0, ymax=0.02,
axis on top,
scaled x ticks=false,
scaled y ticks=false,
ytick=\empty,
yticklabels=\empty,
yticklabel pos=right,
x tick label style={
  /pgf/number format/.cd,
            sci zerofill,
            precision=0,
  /tikz/.cd  
},
extra x tick style={
    font=\large,
    tick style=transparent,     yticklabel pos=left,
    x tick label style={
        /pgf/number format/.cd,
            std,
            precision=3,
      /tikz/.cd
    }
},
width=7cm,
height=1.82cm,
major tick length=\MajorTickLength,
max space between ticks=1000pt,
try min ticks=4,
]
\addplot graphics [
includegraphics cmd=\pgfimage,
xmin=\pgfkeysvalueof{/pgfplots/xmin}, 
xmax=\pgfkeysvalueof{/pgfplots/xmax}, 
ymin=\pgfkeysvalueof{/pgfplots/ymin}, 
ymax=\pgfkeysvalueof{/pgfplots/ymax}
] {./Figures/cool_to_warm_extended_rot.png};
\end{axis}
\end{tikzpicture}
\end{scaletikzpicturetowidth}
\end{center}
\end{minipage}
\qquad
\begin{minipage}{0.27\textwidth}
\begin{center}
\includegraphics[width=\textwidth]{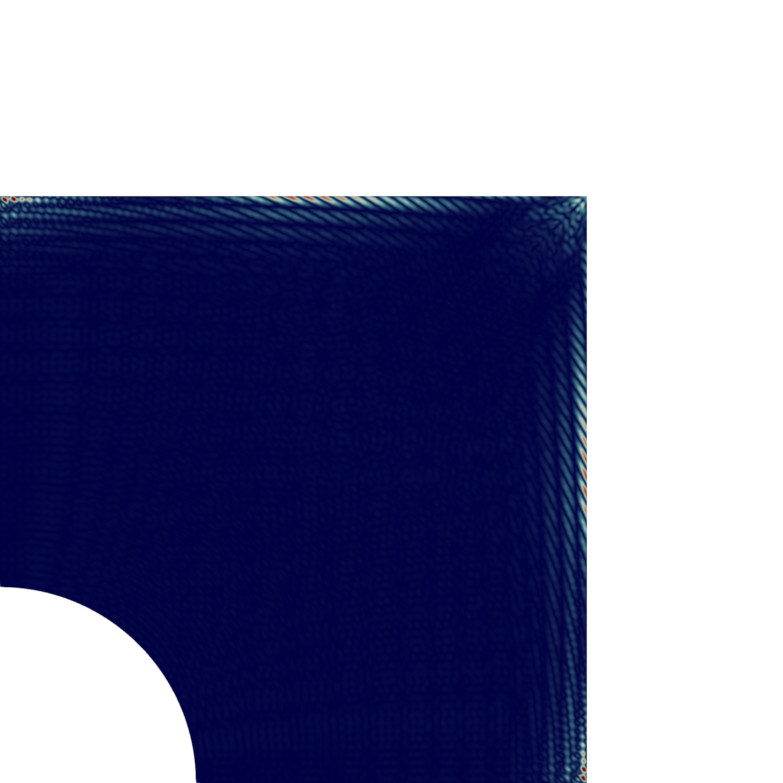}
\begin{scaletikzpicturetowidth}{0.6\textwidth}
\begin{tikzpicture}[scale=\tikzscale,font=\large]
\pgfmathsetlengthmacro\MajorTickLength{
  \pgfkeysvalueof{/pgfplots/major tick length} * 0.5
}
\begin{axis}[
title={\Large $|\mathrm{Re}\{\bmu_h - \tilde{\bmu}_h\}|$},
xmin=0, xmax=3.8e-6,
ymin=0, ymax=0.02,
axis on top,
scaled x ticks=false,
scaled y ticks=false,
ytick=\empty,
yticklabels=\empty,
yticklabel pos=right,
x tick label style={
  /pgf/number format/.cd,
            sci zerofill,
            precision=0,
  /tikz/.cd  
},
extra x tick style={
    font=\large,
    tick style=transparent,     yticklabel pos=left,
    x tick label style={
        /pgf/number format/.cd,
            std,
            precision=3,
      /tikz/.cd
    }
},
width=7cm,
height=1.82cm,
major tick length=\MajorTickLength,
max space between ticks=1000pt,
try min ticks=4,
]
\addplot graphics [
includegraphics cmd=\pgfimage,
xmin=\pgfkeysvalueof{/pgfplots/xmin}, 
xmax=\pgfkeysvalueof{/pgfplots/xmax}, 
ymin=\pgfkeysvalueof{/pgfplots/ymin}, 
ymax=\pgfkeysvalueof{/pgfplots/ymax}
] {./Figures/cool_to_warm_extended_rot.png};
\end{axis}
\end{tikzpicture}
\end{scaletikzpicturetowidth}
\end{center}
\end{minipage}
\caption{\label{fig:PlateWithHolePML}Magnitude of the real part of the solution on the finest mesh with $\omega = 50\,\pi$ and PML boundary conditions. Surrogate IGA solution with $q=5$ and mesh-dependent sampling parameter $M$ (left). Difference between standard and surrogate IGA solutions (center and right).}
\end{figure}
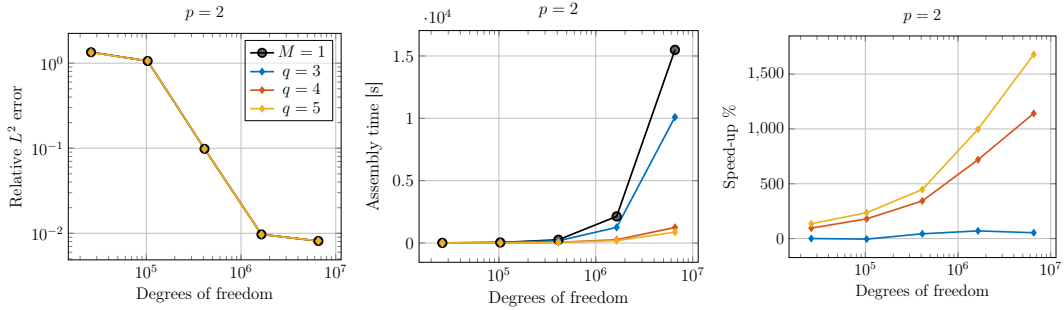
\begin{figure}
\centering
\begin{minipage}{0.32\textwidth}
\begin{scaletikzpicturetowidth}{\textwidth}
\begin{tikzpicture}[scale=\tikzscale,font=\large]
\begin{loglogaxis}[
xlabel={Degrees of freedom},
ylabel={Relative $L^2$ error},
xmajorgrids,
ymajorgrids,
title={$p=2$},
legend style={fill=white, fill opacity=0.6, draw opacity=1, text opacity=1},
legend pos=north east
]
\addplot[black, mark=*, very thick, mark options={scale=1.5}, fill opacity=0.6] table [x index = {0}, y index={1}, col sep=comma] {./Results/Frequency_Elastodynamics_PML/l2errors_real_p2.csv};
\addlegendentry{$M=1$};
\addplot[color1, mark=diamond*, very thick] table [x index = {0}, y index={4}, col sep=comma] {./Results/Frequency_Elastodynamics_PML/l2errors_real_p2.csv};
\addlegendentry{$q=3$};
\addplot[color2, mark=diamond*, very thick] table [x index = {0}, y index={5}, col sep=comma] {./Results/Frequency_Elastodynamics_PML/l2errors_real_p2.csv};
\addlegendentry{$q=4$};
\addplot[color3, mark=diamond*, very thick] table [x index = {0}, y index={6}, col sep=comma] {./Results/Frequency_Elastodynamics_PML/l2errors_real_p2.csv};
\addlegendentry{$q=5$};
\end{loglogaxis}
\end{tikzpicture}
\end{scaletikzpicturetowidth}
\end{minipage}
\begin{minipage}{0.32\textwidth}
\begin{scaletikzpicturetowidth}{\textwidth}
\begin{tikzpicture}[scale=\tikzscale,font=\large]
\begin{semilogxaxis}[
xlabel={Degrees of freedom},
ylabel={Assembly time [s]},
xmajorgrids,
ymajorgrids,
title={$p=2$},
legend style={fill=white, fill opacity=0.6, draw opacity=1, text opacity=1},
legend pos=north east
]
\addplot[black, mark=*, very thick, mark options={scale=1.5}, fill opacity=0.6] table [x index = {0}, y index={1}, col sep=comma] {./Results/Frequency_Elastodynamics_PML/time_assembly_p2.csv};
\addlegendentry{$M=1$};
\addplot[color1, mark=diamond*, very thick] table [x index = {0}, y index={4}, col sep=comma] {./Results/Frequency_Elastodynamics_PML/time_assembly_p2.csv};
\addlegendentry{$q=3$};
\addplot[color2, mark=diamond*, very thick] table [x index = {0}, y index={5}, col sep=comma] {./Results/Frequency_Elastodynamics_PML/time_assembly_p2.csv};
\addlegendentry{$q=4$};
\addplot[color3, mark=diamond*, very thick] table [x index = {0}, y index={6}, col sep=comma] {./Results/Frequency_Elastodynamics_PML/time_assembly_p2.csv};
\addlegendentry{$q=5$};
\legend{}; \end{semilogxaxis}
\end{tikzpicture}
\end{scaletikzpicturetowidth}
\end{minipage}
\begin{minipage}{0.32\textwidth}
\begin{scaletikzpicturetowidth}{\textwidth}
\begin{tikzpicture}[scale=\tikzscale,font=\large]
\begin{semilogxaxis}[
xlabel={Degrees of freedom},
ylabel={Speed-up \%},
xmajorgrids,
ymajorgrids,
title={$p=2$},
legend style={fill=white, fill opacity=0.6, draw opacity=1, text opacity=1},
legend pos=north east
]
\addplot[color1, mark=diamond*, very thick] table [x index = {0}, y index={3}, col sep=comma] {./Results/Frequency_Elastodynamics_PML/speedup_p2.csv};
\addlegendentry{$q=3$};
\addplot[color2, mark=diamond*, very thick] table [x index = {0}, y index={4}, col sep=comma] {./Results/Frequency_Elastodynamics_PML/speedup_p2.csv};
\addlegendentry{$q=4$};
\addplot[color3, mark=diamond*, very thick] table [x index = {0}, y index={5}, col sep=comma] {./Results/Frequency_Elastodynamics_PML/speedup_p2.csv};
\addlegendentry{$q=5$};
\legend{}; \end{semilogxaxis}
\end{tikzpicture}
\end{scaletikzpicturetowidth}
\end{minipage}
\caption{\label{fig:PlateWithHolePMLErrorTime}\changed{Relative $L^2$ errors in $\Omega \cap (0,\ell)^2$ and assembly times for periodic pressure loading with $\omega = 50\pi$ and $p = 2$ computed with PML absorbing boundary conditions.}}
\end{figure}

We refrain from showing non-harmonic linear elastodynamic examples.
They are only of little relevance because the stiffness matrix $\sfK$ needs to be computed only once in the first time step and can be reused for each subsequent step.
Nevertheless, the surrogate approach may be of interest when applied in a matrix-free setting since $\sfK$ would need to be recomputed for each matrix-vector product.

\subsection{Nonlinear hyperelastic waves} \label{sec:nonlinear_elasticity_results}
In this final set of experiments, we consider transient, nonlinear, hyperelastic wave propagation obeying \cref{eq:GeneralSystem} with the energy density functional \eqref{eq:NeoHookeanEnergy}.
The problem setup is illustrated in \cref{fig:nonlinear_wave_setup}.
As domain, we choose the annulus $\Omega = \{\bfx \in \R^2 : 1 < \|\bfx\| < 2\}$ and the time interval $[0,T]$, with $T = 7.5$.
We employ the material parameters $\rho_0 = 1$, $E = 1$, and $\nu = 0.35$.
The corresponding Lam\'e parameters are obtained via the expressions $\mu = \frac{E}{2(1+\nu)}$ and $\lambda = \frac{\nu E}{(1+\nu)(1-2\nu)}$.
We prescribe zero initial displacements and zero initial velocity; i.e.,~$\bfu_0 = \bfv_0 = \bm{0}$.
At the boundary $\partial \Omega$, we apply a pointwise force pulse at the right side, i.e.,~$\partial_{\bmu} W(\bmu) \bmn = \delta(x_1 - 2)\cdot f(t) \cdot (-1,0)^\T$, with
\begin{align}
\label{eq:ForcePulse}
f(t) = \begin{cases} \frac{1}{10} \sin\left(\frac{\pi \, t}{t_f}\right) & t \leq t_f, \\
0 & \text{otherwise},
\end{cases}
\end{align}
where $t_f = \nicefrac{1}{5}$.
The computational domain is split into four patches, as depicted on the right hand side of \cref{fig:nonlinear_wave_setup}, each of which is discretized with $m = 100$ and $p=2$.
The sub-matrices belonging to each patch are assembled in parallel.

For the time-discretization, we employ the nonlinear generalized-$\alpha$ method described in \cite{cottrell2009isogeometric} with the damping parameters $\rho_\infty = \frac{1}{2}$, $\alpha_m = \frac{1}{2} \frac{3 - \rho_\infty}{1 + \rho_\infty}$, and $\alpha_f = \frac{1}{1 + \rho_\infty}$.
Moreover, we choose $\Delta t = 5 \cdot 10^{-3}$ as the time step size.
At each time step, we perform two Newton iterations.
In each iteration the nonlinear matrix is being reassembled using either the standard approach or the surrogate matrix approach, with $q = 5$ and $M = 18$.
Since the mass matrix term does not change over time, we assemble it once using the standard approach and re-use it in the subsequent steps.

In order to compare the solutions of the standard and surrogate method, we record the displacement in $y$-direction over time at two positions, $\bfx_1 = (0,1)^\T$ and $\bfx_2 = (0,2)^\T$, in \cref{fig:nonlinear_wave_displacements}.
No visual difference can be observed.
On the left-hand side of \cref{fig:nonlinear_wave_energy}, we present the kinetic, internal, and total energy divided by two versus the time $t \geq 0.3$ for both approaches.
We observe, once again, no visual difference in the two solutions.
The central plot in \cref{fig:nonlinear_wave_energy} shows the time required to complete each time step.
For each time step, this \changed{required time} includes the right-hand side evaluation as well as the reassembly and inversion of the tangent matrices for each Newton iteration.
\changed{We use the MATLAB backslash operator to invert the emerging systems which takes on average \SI{6.41}{\second} per time step independent of which assembly method is used.
In total, inverting the tangent matrices in the standard approach takes up approximately 10.7\% of the time and approximately 26.6\%, if the surrogate method is used.
For the sake of completeness, we present the accumulated total time and the accumulated time required for the inversion of the systems in the right-hand side of \cref{fig:nonlinear_wave_energy}.
In this scenario, a speed-up of about 142\% may be observed.}
Finally, in \cref{fig:nonlinear_wave_stress}, we illustrate the von Mises stress at different times.
The faster traveling body waves reach the point $\bfx_2$ first, followed by the surface waves, which result in the greatest displacements; cf.~\cref{fig:nonlinear_wave_displacements}.

\begin{figure}
\centering
\includegraphics[width=0.4\textwidth]{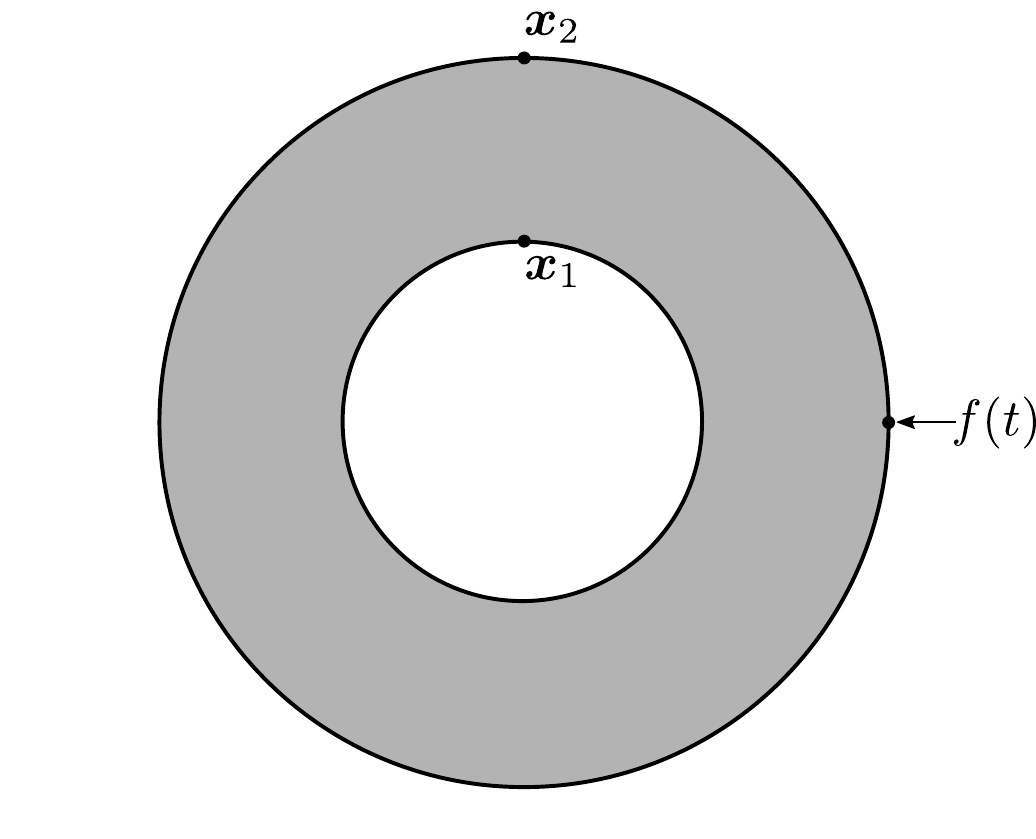}
\includegraphics[width=0.4\textwidth]{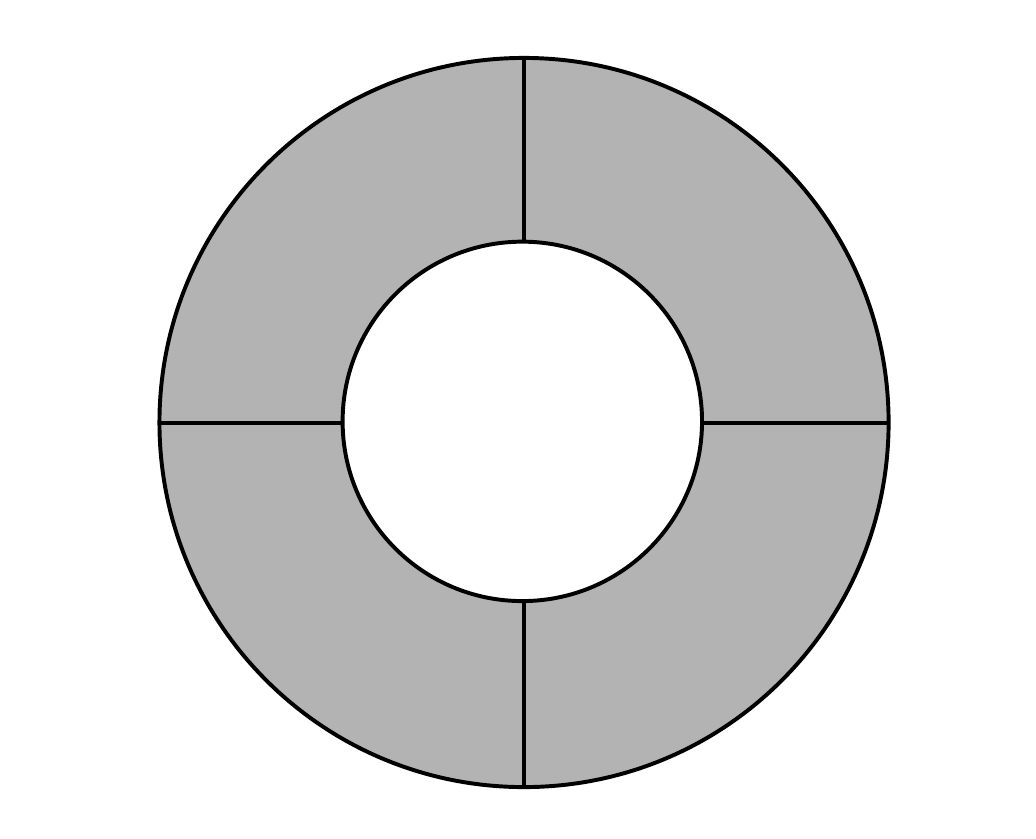}
\caption{\label{fig:nonlinear_wave_setup}Problem setup (left) and patches (right).}
\end{figure}
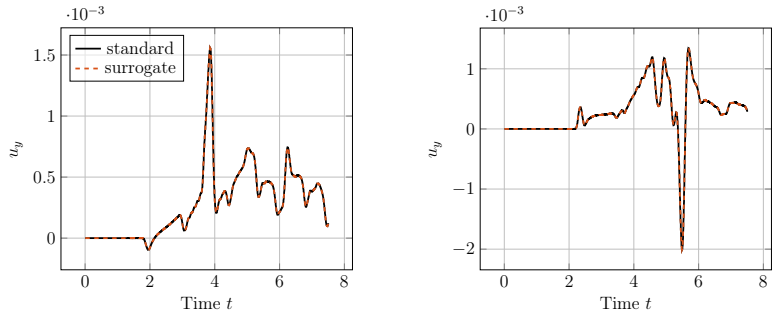
\begin{figure}
\centering
\begin{scaletikzpicturetowidth}{0.32\textwidth}
\begin{tikzpicture}[scale=\tikzscale,font=\large]\centering
\begin{axis}[
xlabel={Time $t$},
xmajorgrids,
ylabel={$u_y$},
ymajorgrids,
mark repeat=100,
legend style={fill=white, fill opacity=0.6, draw opacity=1, text opacity=1},
legend pos=north west
]
\addplot[black,very thick] table [x="Time", y="avg(u (1))", col sep=comma] {Results/displacements_time.csv};
\addlegendentry{standard};
\addplot[color2,very thick,dashed] table [x="Time", y="avg(u_surr (1))", col sep=comma] {Results/displacements_time.csv};
\addlegendentry{surrogate};
\end{axis}
\end{tikzpicture}
\end{scaletikzpicturetowidth}
\qquad
\begin{scaletikzpicturetowidth}{0.32\textwidth}
\begin{tikzpicture}[scale=\tikzscale,font=\large]\centering
\begin{axis}[
xlabel={Time $t$},
xmajorgrids,
ylabel={$u_y$},
ymajorgrids,
mark repeat=50,
legend style={fill=white, fill opacity=0.6, draw opacity=1, text opacity=1},
legend pos=north west
]
\addplot[black,very thick] table [x="Time", y="avg(u_input_1 (1))", col sep=comma] {Results/displacements_time.csv};
\addlegendentry{standard};
\addplot[color2,very thick,dashed] table [x="Time", y="avg(u_surr_input_1 (1))", col sep=comma] {Results/displacements_time.csv};
\addlegendentry{surrogate};
\legend{};
\end{axis}
\end{tikzpicture}
\end{scaletikzpicturetowidth}
\caption{\label{fig:nonlinear_wave_displacements}Displacements in $y$-direction $u_y$ recorded at $\bfx_1 = (0,1)^\T$ (left) and $\bfx_2 = (0,2)^\T$ (right) for the standard and surrogate method.}
\end{figure}
\begin{figure}
\centering
\begin{minipage}{0.54\textwidth}
\begin{scaletikzpicturetowidth}{\textwidth}
\begin{tikzpicture}[scale=\tikzscale,font=\large]
\begin{axis}[
xlabel={Time $t$},
ylabel={Energy},
xmajorgrids,
ymajorgrids,
title={Energy over time},
legend style={fill=white, fill opacity=0.6, draw opacity=1, text opacity=1},
legend pos=south east,
legend style={at={(0.5,0.75)},anchor=center,font=\small},
yticklabel style={
	/pgf/number format/fixed,
	/pgf/number format/precision=1
},
xmin=0.3,
xmax=7.5,
ymin=1.6e-5,
ymax=2.0e-5,
width=12cm,
height=7cm
]
\addplot[color1, very thick, restrict x to domain=0.3:7.5] table [x index = {0}, y expr=\thisrowno{4}/2, col sep=comma ] {./Results/Transient_Nonlinear_Elastodynamics_MP_fullannulus_wave_par/energy.csv};
\addlegendentry{$\nicefrac{1}{2}$ Total (standard)};

\addplot[color3, very thick, restrict x to domain=0.3:7.5] table [x index = {0}, y index={2}, col sep=comma] {./Results/Transient_Nonlinear_Elastodynamics_MP_fullannulus_wave_par/energy.csv};
\addlegendentry{Internal (standard)};

\addplot[color4, dashed, very thick, restrict x to domain=0.3:7.5] table [x index = {0}, y index={2}, col sep=comma] {./Results/Transient_Nonlinear_Elastodynamics_MP_fullannulus_wave_par_surr/energy.csv};
\addlegendentry{Internal (surrogate)};

\addplot[color5, very thick, restrict x to domain=0.3:7.5] table [x index = {0}, y index={1}, col sep=comma] {./Results/Transient_Nonlinear_Elastodynamics_MP_fullannulus_wave_par/energy.csv};
\addlegendentry{Kinetic (standard)};

\addplot[color7, dashed, very thick, restrict x to domain=0.3:7.5] table [x index = {0}, y index={1}, col sep=comma] {./Results/Transient_Nonlinear_Elastodynamics_MP_fullannulus_wave_par_surr/energy.csv};
\addlegendentry{Kinetic (surrogate)};

\end{axis}
\end{tikzpicture}
\end{scaletikzpicturetowidth}
\end{minipage}\hfill
\begin{minipage}{0.24\textwidth}
\begin{scaletikzpicturetowidth}{\textwidth}
\begin{tikzpicture}[scale=\tikzscale,font=\large]
\begin{axis}[
xlabel={\changed{Time step}},
ylabel={\changed{Required time} [s]},
xmajorgrids,
ymajorgrids,
title={\changed{Required time per time step}},
legend style={fill=white, fill opacity=0.6, draw opacity=1, text opacity=1,font=\small},
legend pos=north east,
height=7.6cm,
width=5.33cm,
xmin=0,
xmax=1500
]
\addplot[black, very thick] table [x index = {0}, y index={3}, col sep=comma] {./Results/Transient_Nonlinear_Elastodynamics_MP_fullannulus_wave_par/time.csv};
\addlegendentry{Standard};

\addplot[color2, very thick] table [x index = {0}, y index={3}, col sep=comma] {./Results/Transient_Nonlinear_Elastodynamics_MP_fullannulus_wave_par_surr/time.csv};
\addlegendentry{Surrogate};

\end{axis}
\end{tikzpicture}
\end{scaletikzpicturetowidth}
\end{minipage}\begin{minipage}{0.21\textwidth}
\begin{scaletikzpicturetowidth}{\textwidth}
\pgfplotsset{compat=1.11,
    /pgfplots/ybar legend/.style={
    /pgfplots/legend image code/.code={       \draw[##1,/tikz/.cd,yshift=-0.25em]
        (0cm,0cm) rectangle (3pt,0.8em);},
   },
}
\changed{\begin{tikzpicture}[scale=\tikzscale,font=\large]
\begin{axis}[ymin=0,
height=8.0cm,
x=5.0cm,
enlarge x limits={abs=0.72cm},
bar width=0.3cm,
ybar,
xmajorticks=false,
nodes near coords,
xmin=0.0,xmax=0.5,
ylabel={Accumulated time [s]},
ymax=110000,
legend style={at={(0.5,-0.01)}, anchor=north,legend columns=2,font=\small}]
\addplot+[black] coordinates {(0.1, 90497)};
\addplot+[color5] coordinates {(0.2, 9638)};
\addplot+[color2] coordinates {(0.3, 37455)};
\addplot+[color1] coordinates {(0.4, 9581)};
\legend{std (total), std (solve), surr (total), surr (solve)}
\end{axis}
\end{tikzpicture}}
\end{scaletikzpicturetowidth}
\end{minipage}
\caption{\label{fig:nonlinear_wave_energy} Total divided by two, kinetic, and internal energy over the time interval $[0.3, T]$ (left). \changed{Required time} per time step (center). \changed{Accumulated total time and accumulated time spent on the inversion of the tangent matrices} (right).}
\end{figure}

\begin{figure}
\begin{center}
\includegraphics[width=0.3\linewidth]{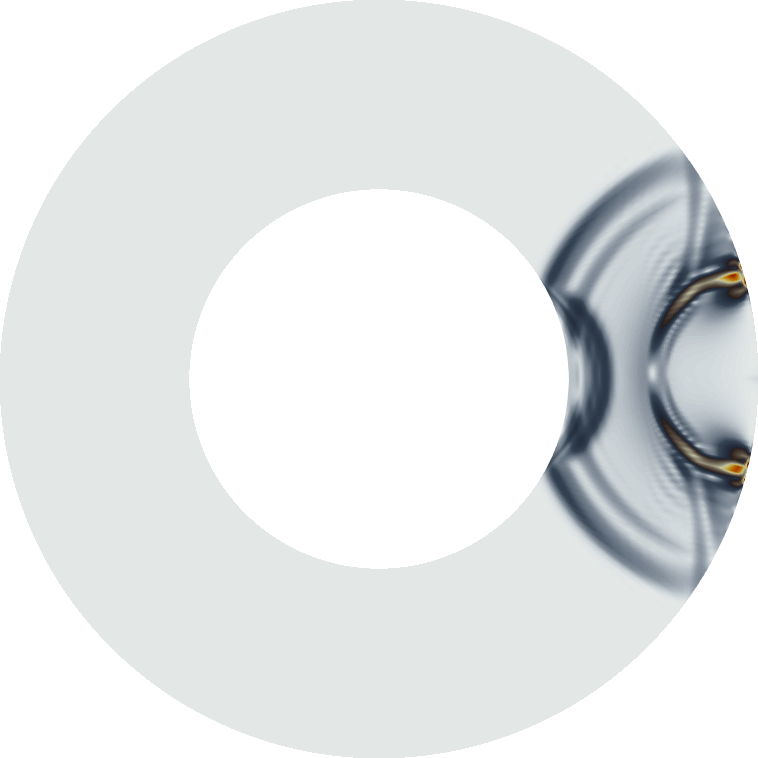}
\hfill
\includegraphics[width=0.3\linewidth]{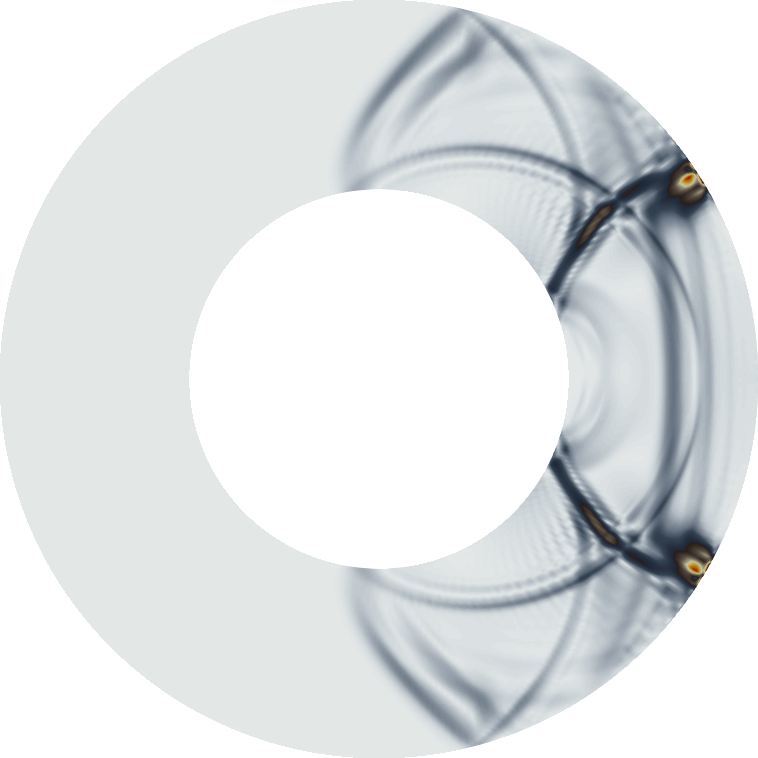}
\hfill
\includegraphics[width=0.3\linewidth]{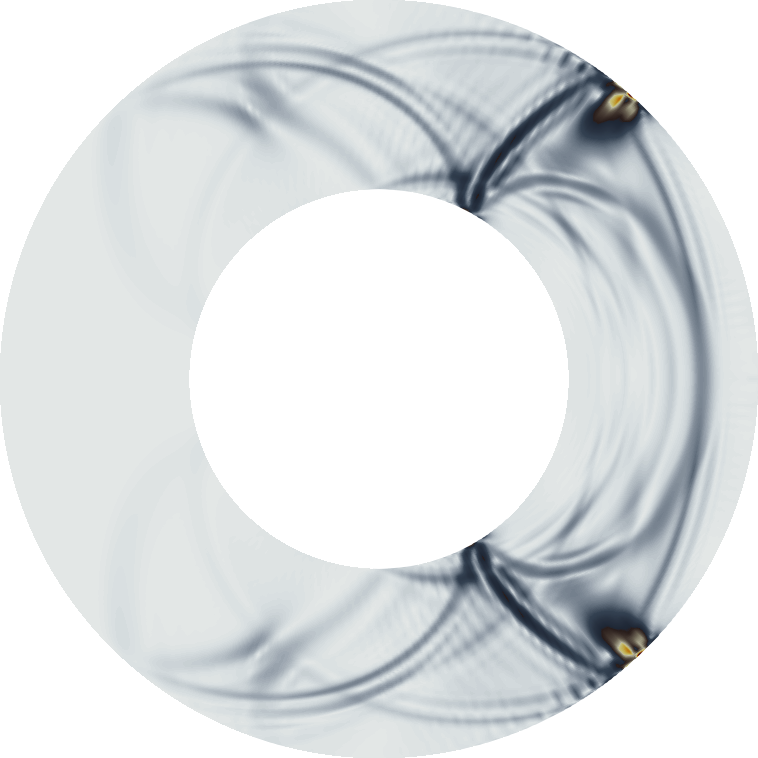}\\[1em]
\includegraphics[width=0.3\linewidth]{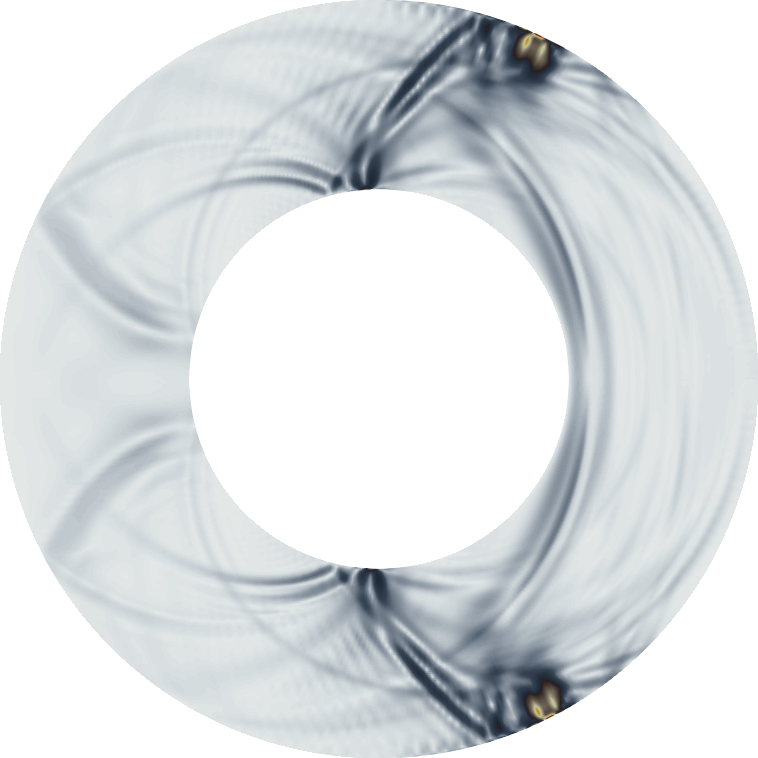}
\hfill
\includegraphics[width=0.3\linewidth]{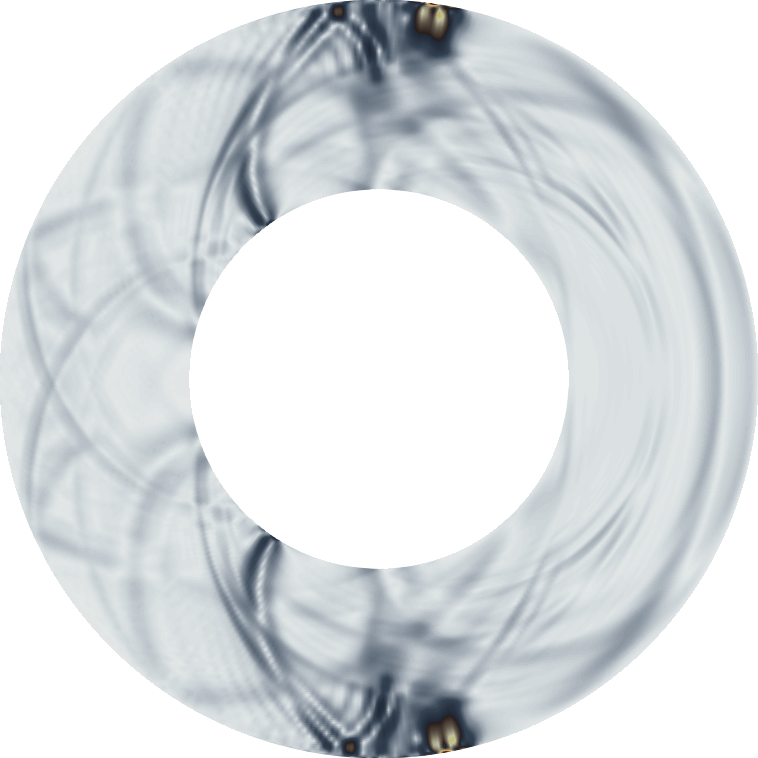}
\hfill
\includegraphics[width=0.3\linewidth]{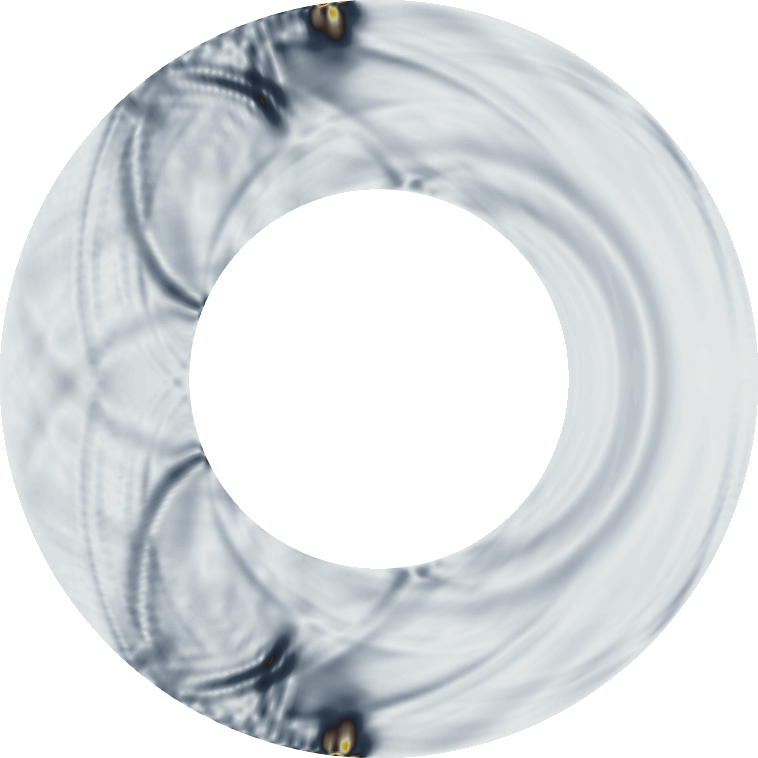}\\[1em]
\begin{scaletikzpicturetowidth}{0.3\textwidth}
\begin{tikzpicture}[scale=\tikzscale,font=\large]
\pgfmathsetlengthmacro\MajorTickLength{
  \pgfkeysvalueof{/pgfplots/major tick length} * 0.5
}
\begin{axis}[
title={\Large $\sigma_{v,M}$},
xmin=0, xmax=3e-2,
ymin=0, ymax=0.02,
axis on top,
scaled x ticks=false,
scaled y ticks=false,
ytick=\empty,
yticklabels=\empty,
yticklabel pos=right,
x tick label style={
  /pgf/number format/.cd,
            sci zerofill,
            precision=0,
  /tikz/.cd  
},
extra x tick style={
    font=\large,
    tick style=transparent,     yticklabel pos=left,
    x tick label style={
        /pgf/number format/.cd,
            std,
            precision=3,
      /tikz/.cd
    }
},
width=7cm,
height=1.82cm,
major tick length=\MajorTickLength,
max space between ticks=1000pt,
try min ticks=4,
]
\addplot graphics [
includegraphics cmd=\pgfimage,
xmin=\pgfkeysvalueof{/pgfplots/xmin}, 
xmax=\pgfkeysvalueof{/pgfplots/xmax}, 
ymin=\pgfkeysvalueof{/pgfplots/ymin}, 
ymax=\pgfkeysvalueof{/pgfplots/ymax}
] {./Figures/colormap_customized_yellow_gray_blue_rot.png};
\end{axis}
\end{tikzpicture}
\end{scaletikzpicturetowidth}
\end{center}
\caption{\label{fig:nonlinear_wave_stress}Von Mises stress $\sigma_{v,M}$ in the nonlinear hyperelastic wave problem at times $t \in \{1, 2, 3, 4, 5, 6\}$.}
\end{figure}

\section{Conclusion} \label{sec:conclusion}
In this work, we applied the surrogate matrix methodology to several problems emerging in the investigation of waves in the isogeometric setting.
We performed an a priori error analysis for the Helmholtz equation which demonstrated that the additional consistency error introduced by the presence of surrogate matrices is independent of the wave number.
Moreover, we presented a floating point analysis showing that the computational complexity of the methodology compares favorably to other state-of-the-art assembly techniques for isogeometric analysis.

We confirmed the theoretical error estimates for the Helmholtz equation by performing benchmark computations showing the correct convergence behavior.
We furthermore showed that the methodology is beneficial when applied to wave problems with PML absorbing boundary conditions by considering a linear problem in elastodynamics.
Finally, we applied the methodology to a transient, nonlinear, hyperelastic wave propagation problem with a material modeled by a compressible neo-Hookean material.
This last example showed the efficacy of the methodology for implicit time stepping schemes in which a nonlinear problem is solved by Newton's method in each time step.
Our numerical experiments demonstrate clear performance gains in all experiments and we observed speed-ups of up to $3178$\%, when compared to the reference assembly algorithm, without losing any significant accuracy.

Thus far, in order to address the feasibility of this methodology in isogeometric analysis, we have relied on the MATLAB software GeoPDEs \cite{de2011geopdes}.
This software greatly lends itself to rapid prototyping but is not actually suitable for high performance experiments and it only supports element-loop assembly.
Element-loop assembly is not necessary for the surrogate matrix methodology.
Row/column-loop assembly would only provide better performance.
Nevertheless, because our implementations employ element-loop assembly, which is presently the dominant assembly strategy employed in IGA software, it allows our experiments to directly suggest how surrogate matrices would perform in many other codes.

\section*{Acknowledgments}
The authors would like to thank Ren\'e Hiemstra for his insightful conversations about our work. 
This project has received funding from the European Union's Horizon 2020 research and innovation programme under grant agreement No 800898.
This work was also partly supported by the German Research Foundation through
the Priority Programme 1648 ``Software for Exascale Computing'' (SPPEXA) and by grant WO671/11-1.
\FloatBarrier
\phantomsection\bibliographystyle{abbrv}
\bibliography{main}

\end{document}